\documentclass[11pt,a4]{amsart}

\title{A Counterexample to the First Zassenhaus Conjecture}
\author{Florian Eisele}
\author{Leo Margolis}

\address{Florian Eisele \newline Department of Mathematics, City, University of London,
	Northampton Square, London EC1V 0HB, United Kingdom \newline E-mail address: {\tt Florian.Eisele@city.ac.uk}}
\address{Leo Margolis \newline Departamento de Matem\`aticas, Universidad de Murcia, 30100 Murcia, Spain \newline E-mail address: {\tt leo.margolis@um.es}}

\usepackage{amssymb}
\usepackage{amsthm}
\usepackage{amsmath}
\usepackage[all]{xy}
\usepackage{geometry}
\usepackage{newcent,microtype}
\usepackage[english]{babel}
\usepackage{mathtools}
\usepackage{amsthm}
\usepackage{graphicx}
\usepackage{enumitem}

\usepackage[bookmarks=false,colorlinks=true,linkcolor=black,citecolor=black,filecolor=black,urlcolor=black]{hyperref} 

\geometry{
	includeheadfoot,
	margin=2.54cm
}

\newcommand{\eps}{\varepsilon}
\newcommand{\Z}{\mathbb Z}
\newcommand{\Q}{\mathbb Q}
\newcommand{\C}{\mathbb C}
\newcommand{\N}{\mathbb N}
\newcommand{\F}{\mathbb F}
\newcommand{\Hom}{\operatorname{Hom}}
\newcommand{\End}{{\rm{End}}}

\newcommand{\Ker}{\operatorname{Ker}}
\newcommand{\Irr}{\operatorname{Irr}}
\newcommand{\Tr}{\operatorname{Tr}}

\newcommand{\Cl}{\operatorname{Cl}}

\newcommand{\nr}{{\rm nr}}
\newcommand{\Nr}{{\rm Nr}}
\newcommand{\UU}{\mathcal{U}}

\newcommand{\OfU}{}

\theoremstyle{theorem}
\newtheorem{defi}{Definition}[section]

\newtheorem{thm}[defi]{Theorem}
\newtheorem{lemma}[defi]{Lemma}
\newtheorem{corollary}[defi]{Corollary}
\newtheorem{notation}[defi]{Notation}
\newtheorem{prop}[defi]{Proposition}

\theoremstyle{definition}
\newtheorem{remark}[defi]{Remark}

\newcommand\blfootnote[1]{%
	\begingroup
	\renewcommand\thefootnote{}\footnote{#1}%
	\addtocounter{footnote}{-1}%
	\endgroup
}

\newtheoremstyle{named}{}{}{\itshape}{}{\bfseries}{.}{.5em}{\thmnote{#3}}
\theoremstyle{named}
\newtheorem*{namedtheorem}{Theorem}

\mathtoolsset{showonlyrefs}

\begin{document}
	
\maketitle
\begin{abstract} Hans~J.~Zassenhaus conjectured that for any unit $u$ of finite order in the integral group ring of a finite group $G$ there exists a unit $a$ in the rational group algebra of $G$ such that $a^{-1}\cdot u \cdot a=\pm g$ for some $g\in G$. We disprove this conjecture by first proving general results that help identify counterexamples and then providing an infinite number of examples where these results apply.  Our smallest example is a metabelian group of order $2^7 \cdot 3^2 \cdot 5 \cdot 7^2 \cdot 19^2$ whose integral group ring contains a unit of order $7 \cdot 19$ which, in the rational group algebra, is not conjugate to any element of the form $\pm g$.
\end{abstract}

\blfootnote{\textit{2010 Mathematics Subject Classification}. Primary 16S34. Secondary 16U60, 20C11, 20C05.}
\blfootnote{\textit{Key words and phrases}. Unit Group, Group Ring, Zassenhaus Conjecture, Integral Representations.}
\blfootnote{The first author was supported by the EPSRC, grant EP/M02525X/1.  The second author was supported by a Marie Curie Individual Fellowship from EU project 705112-ZC and the FWO (Research Foundation Flanders).}

\section{Introduction}

Let $G$ be a finite group and denote by $RG$ the group ring of $G$ over a commutative ring $R$. Denote by $\UU(RG)$ the unit group of $RG$. 
In the 1970's Zassenhaus made three strong conjectures about finite subgroups of $\UU(\Z G)$ (cf. \cite[Section 37]{SehgalBook}). These conjectures, often called the first, second and third Zassenhaus conjecture and sometimes abbreviated as (ZC1), (ZC2) and (ZC3), had a lasting impact on research in the field. All three of these conjectures turned out to be true for nilpotent groups \cite{Weiss91}, but metabelian counterexamples for the second and the third one were found by K.~W.~Roggenkamp and L.~L.~Scott \cite{Scott, Klingler}. Later M.~Hertweck showed that there are counterexamples of order as small as $96$ \cite[Section 11]{HertweckHabil}.  
Unlike its siblings, the first Zassenhaus conjecture seemed to stand the test of time. Since it was the only one of the three to remain open, people in recent years started referring to it as \emph{the} Zassenhaus conjecture, and we will do the same in this article.
\begin{namedtheorem}[Zassenhaus Conjecture]
If $u\in \UU(\Z G)$ is a unit of finite order, then there is an $a\in \UU(\Q G)$ such that
$
a^{-1}\cdot u \cdot a = \pm g
$
for some $g\in G$.\\
\end{namedtheorem}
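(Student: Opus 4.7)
The plan is to attack the conjecture via the partial augmentation reformulation due to Marciniak, Ritter, Sehgal and Weiss. For a torsion unit $u$ of order $n$ in $\UU(\Z G)$, write $u = \sum_{g \in G} z_g\, g$ and set $\eps_C(u) = \sum_{g \in C} z_g$ for each conjugacy class $C$. The cited theorem states that $u$ is rationally conjugate to some $\pm g \in G$ if and only if, for every divisor $d$ of $n$ and every conjugacy class $C$, the partial augmentation $\eps_C(u^d)$ is non-negative (with the remaining normalisations forced by $\eps(u) = 1$). The first goal is therefore to establish positivity of these partial augmentations for all powers of $u$.

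To constrain the augmentations I would invoke the Luthar--Passi machinery. For each $\chi \in \Irr(G)$, each $d \mid n$ and each primitive $n$-th root of unity $\xi$, the multiplicity of $\xi$ as an eigenvalue of $u^d$ in a representation affording $\chi$ is a non-negative integer expressible explicitly as an integer combination of the values $\chi(g_C)\,\eps_C(u^d)$. Taken over all $\chi$, $d$ and $\xi$ this produces a finite system of linear inequalities and congruences on the tuple of partial augmentations, and the hope of the method is that this system is tight enough to force non-negativity in every case.

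For the structural part I would proceed by induction on $|G|$. If $G$ is nilpotent the result is Weiss's theorem, so assume $G$ has a non-trivial normal subgroup $N$. Projecting $u$ to $\UU(\Z(G/N))$ gives by induction a rational conjugation $\bar a^{-1} \bar u \bar a = \pm \bar g$, and I would try to use Clifford theory along $N \trianglelefteq G$ to lift this conjugation to $\UU(\Q G)$: the inertia group of a simple $\F N$-constituent of $u$ carries the obstruction, and when it is all of $G$ a standard double-coset argument should suffice. Once rational conjugacy is handled for units of prime power order, the mixed-order case would be attacked by decomposing $u = u_p u_{p'}$ in $\UU(\Z_{(p)}G)$ and applying the prime-power conclusion to each factor.

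The hard part is that neither leg of the above strategy is actually complete. The Luthar--Passi inequalities genuinely admit integer solutions in which some $\eps_C(u)$ is negative, particularly when $u$ has mixed order $pq$ and the Sylow subgroups interact through a non-trivial Galois twist of simple modules; in that regime the character-theoretic conditions must be supplemented with integral information about $\Z_{(p)}G$-lattices of the kind exploited by Weiss. Whether such an integral input can be extracted in the metabelian, mixed-order setting — and whether it is sharp enough to force positivity of partial augmentations uniformly — is the crux of the problem, and this step is where I would expect the strategy to be most fragile.
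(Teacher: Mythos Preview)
There is a fundamental misreading here: the paper does not \emph{prove} the Zassenhaus Conjecture, it \emph{refutes} it. The statement you are trying to establish is false, and the body of the paper is devoted to constructing an explicit counterexample --- a metabelian group $G=G(7,19;3;\alpha,\beta)$ together with a unit $u\in\UU(\Z G)$ of order $7\cdot 19$ which is not conjugate in $\UU(\Q G)$ to any $\pm g$.

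Your own proposal in fact identifies exactly where the argument must break. You note that the Luthar--Passi inequalities ``genuinely admit integer solutions in which some $\eps_C(u)$ is negative, particularly when $u$ has mixed order $pq$'', and you flag the metabelian, mixed-order situation as the fragile point. That is not a fragile point to be repaired; it is the location of the counterexample. The paper shows that for suitable metabelian $G=N\rtimes A$ with $N$ elementary abelian of order $p^2q^2$, one can write down a class function $\eps$ with a negative value on a class of order $pq$, verify that it satisfies the necessary character-theoretic constraints (their conditions \ref{cond sum 1}--\ref{cond xi proper char}), build $G$-regular $\Z_{(r)}(G\times U)$-lattices realising it for every prime $r$, and then use a local--global argument (Fr\"ohlich's description of $\Cl(\Lambda)$ plus Jacobinski cancellation) to glue these into a $\Z(G\times U)$-lattice, i.e.\ an honest unit in $\UU(\Z G)$ with the prescribed negative partial augmentation. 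The Marciniak--Ritter--Sehgal--Weiss criterion then certifies that this unit is not rationally conjugate to any $\pm g$.

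So the correct response to this ``statement'' is not a proof but a counterexample, and any attempt to push through the inductive/Clifford-theoretic lifting you sketch is doomed: the obstruction in the mixed-order metabelian case is genuine, not merely technical.
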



This conjecture first appeared in written form in \cite{Zassenhaus} and inspired a lot of research in the decades to follow. The first results on the conjecture were mostly concerned with special classes of metabelian groups, \cite{HughesPearson72, PolcinoMilies73, AllenHobby80, BhandariLuthar83, RitterSehgal83, PolcinoMiliesSehgal84, Mitsuda86, PolcinoMiliesRitterSehgal86, SehgalWeiss86, MRSW87, LutharTrama90, LutharPassi92, LutharSehgal98, JuriaansMilies00, BovdiHoefertKimmerle04, delRioSehgal06}. Almost all of these results were later generalised by Hertweck \cite{HertweckColloq, HertweckEdinb}. Hertweck proved that the Zassenhaus conjecture holds for groups $G$ which have a normal Sylow $p$-subgroup with abelian complement or a cyclic normal subgroup $C$ such that $G=C\cdot A$ for some abelian subgroup $A$ of $G$. The latter result was further generalised in \cite{CyclicByAbelian}, proving that the Zassenhaus conjecture holds for cyclic-by-abelian groups. In a different vein, A.~Weiss' proof of the conjecture, or even a stronger version of it, for nilpotent groups \cite{Weiss88, Weiss91}, was certainly a highlight of the study. 
The conjecture is also known to hold for a few other classes of solvable groups \cite{Fernandes87, DokuchaevJuriaans96, BaechleKimmerleMargolisDFG, MargolisdelRioCW1, MargolisdelRioPAP, MargolisdelRioCW3}, as well as for some small groups. In particular, the conjecture holds for groups of order smaller than $144$ \cite{HoefertKimmerle06, HermanSingh15, SmallGroups}. 

Progress on non-solvable groups was initially lagging. For many years the conjecture was only known to hold for the alternating and symmetric group of degree $5$ \cite{LutharPassi, LutharTrama} and the special linear group $\operatorname{SL}(2,5)$ \cite{DokuchaevJuriaansPolcinoMilies97}. This state of affairs changed when Hertweck introduced a method to tackle the conjecture involving Brauer characters \cite{HertweckBrauer}. Nevertheless, results are still relatively far and between \cite{HertweckBrauer, HertweckA6, BovdiHertweck08, Gildea13, 4primaryII, KimmerleKonovalov17, Gitter}, and, for instance, the only non-abelian simple groups for which the conjecture has been verified are the groups $\operatorname{PSL}(2,q)$ where $q \leq 25$,  $q =32$ \cite{4primaryII} or where $q$ is a Fermat or Mersenne prime \cite{FermatMersenne}.

In the present article we show that the Zassenhaus conjecture is false by providing a series of metabelian groups $G$ such that $\mathbb{Z}G$ contains a unit of finite order not conjugate in $\mathbb{Q}G$ to any element of the form $\pm g$ for $g\in G$.

Let us describe these groups. To this end, let $p$ and $q$ be odd primes, $d$ an odd divisor of $p-1$ and $q-1$, $N$ the additive group $\mathbb{F}_{p^2} \oplus \mathbb{F}_{q^2}$, and let $\alpha$ and $\beta$ be primitive elements in the multiplicative groups $\mathbb{F}_{p^2}^\times $ and $\mathbb{F}_{q^2}^\times$, respectively. 
Consider the abelian group 
$$A = \langle a,b,c\ | \ a^{\frac{p^2-1}{d}}=b^{\frac{q^2-1}{d}}=1,\ c^d=a\cdot b \rangle$$ 
There is an action of $A$ on $N$ given by 
$$(x,y)^a = (\alpha^d\cdot x, y), \ (x,y)^b = (x, \beta^d\cdot y), (x,y)^c = (\alpha\cdot x, \beta\cdot y) $$
and we may form the semidirect product $N\rtimes A$, which we denote by $G(p,q;d;\alpha,\beta)$. The following are our main results:

\begin{namedtheorem}[Theorem A]
  Let $G=G(7,19;3;\alpha,\beta)$, where $\alpha$ is a root of the polynomial $X^2-X+3$ over $\mathbb{F}_7$ and $\beta$ is a root of $X^2-X+2$ over $\mathbb{F}_{19}$.
  There exists a unit $u \in \UU(\mathbb{Z}G)$ of order $7 \cdot 19$ such that $u$ is not conjugate in $\mathbb{Q}G$ to any element of the form $\pm g$ for $g \in G$. In particular, the Zassenhaus conjecture does not hold for $G$.
\end{namedtheorem}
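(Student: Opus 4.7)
The plan is to apply the general framework developed in the body of this paper, which reduces the construction of a counterexample in a group of the form $G(p,q;d;\alpha,\beta)$ to verifying concrete arithmetic and representation-theoretic hypotheses on the parameters; my job for the specific triple $(p,q,d)=(7,19,3)$ together with the prescribed $\alpha,\beta$ is to check that these hypotheses indeed hold. The first reduction is the Marciniak--Ritter--Sehgal--Weiss criterion: a torsion unit $u \in \UU(\Z G)$ is rationally conjugate to some element of $\pm G$ if and only if, for every power $u^k$, the partial augmentation function $C \mapsto \varepsilon_C(u^k)$ is concentrated on a single $G$-conjugacy class (up to sign). Producing a counterexample therefore amounts to building a torsion unit of order $7\cdot 19$ whose partial augmentations demonstrably cannot have this shape.

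Next I would exploit the metabelian split structure $G = N \rtimes A$ with $N = \F_{p^2}\oplus\F_{q^2}$ abelian. The Wedderburn components of $\Q G$ are controlled by the $A$-orbits of linear characters of $N$, and the specific $\alpha$ (root of $X^2-X+3$ over $\F_7$) and $\beta$ (root of $X^2-X+2$ over $\F_{19}$) encode exactly the Galois data that governs how the stabilisers of these characters sit inside $A$ on the $7$-local and $19$-local sides simultaneously. Using the general results of the paper, I would produce, block by block in $\Q G$, a candidate element $u$ of order $7\cdot 19$ whose action on each simple factor has prescribed eigenvalues, and then argue by a local--global analysis -- with the only delicate primes being $\ell=7$ and $\ell=19$ -- that this candidate actually lies in $\Z G$ and not merely in $\Q G$. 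I expect this integrality/descent step to be the main obstacle: it is what forces the arithmetic hypotheses ($d$ odd, $d \mid \gcd(p-1,q-1)$, and the explicit minimal polynomials of $\alpha,\beta$), and typically it comes down to gluing $\ell$-local $G$-lattices of prescribed character along a common reduction.

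Finally, once $u \in \UU(\Z G)$ has been established, failure of rational conjugacy to any $\pm g$ would be verified by computing the partial augmentations of $u$ and of its relevant powers and comparing with the possibilities afforded by $\pm G$. The structural reason the verification goes through is that the $q$-part $u^7$ and the $p$-part $u^{19}$ can each be arranged to behave like a genuine group element on the corresponding faithful blocks, yet no single $g \in G$ of order $133$ simultaneously matches both profiles: the incompatibility engineered into the choice of $(\alpha,\beta)$ prevents the partial augmentations of $u$ itself from concentrating on a single class. This last comparison is a finite and essentially mechanical character/partial-augmentation computation once the explicit $u$ is in hand, and in practice would be confirmed by a direct inspection of the relevant rows of the character table of $G$.
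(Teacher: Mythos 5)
Your outline correctly identifies the paper's own strategy (double action modules, local construction, local--global descent via reduced norms), so this is not a different route; the problem is that what you have written is a plan rather than a proof, and the two steps that actually constitute the proof of Theorem A are missing. First, you never specify the unit: the whole argument hinges on choosing a concrete class function of partial augmentations, namely $\eps_{(1,1)^G}=2$ and $\eps_{(1,\beta^2)^G}=-1$ with all other values zero, supported on classes of elements of order $7\cdot 19$ in $N$. Second, and decisively, you never carry out the arithmetic that the specific minimal polynomials $X^2-X+3$ and $X^2-X+2$ are there for. The hypotheses of the reduction theorem (Theorem~\ref{thm zc Gpqd}) are the inequalities \eqref{eqn inequality rp} and \eqref{eqn inequality rq}, whose coefficients $r_i(7)$ and $r_i(19)$ are computed by evaluating the norms $\Nr(\alpha+x)=x^2+x+3$ for $x\in\F_7$ and $\Nr(\beta+x)=x^2+x+2$ for $x\in\F_{19}$ and sorting the results into residue classes modulo $d=3$; one finds $(r_1(7),r_2(7),r_3(7))=(2,4,1)$ and $(r_1(19),r_2(19),r_3(19))=(9,6,4)$, and only then can one check that the resulting circulant matrices applied to the chosen partial augmentation vectors are nonnegative. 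Without this computation there is no verification that a $G$-regular $\Z(G\times U)$-lattice with the required character exists, i.e.\ no unit. (The remaining hypotheses --- the Eichler condition and the nonvanishing of $(\chi,\eta\otimes 1_U)$ --- are verified once and for all for every $G(p,q;d;\alpha,\beta)$ in Propositions~\ref{prop irr chars} and~\ref{prop eigenvalue 1}, so the descent step you flag as ``the main obstacle'' is in fact the part that requires no case-specific work.)

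A secondary but genuine error is your description of \emph{why} $u$ fails to be conjugate to $\pm g$. It is not that the $7$-part and $19$-part of $u$ ``behave like group elements on different blocks but are incompatible''; indeed $u^7$ and $u^{19}$ cause no obstruction. The obstruction is in $u$ itself: its partial augmentations are supported on two distinct conjugacy classes (equivalently, $\eps_{(1,\beta^2)^G}(u)=-1<0$), and by Propositions~\ref{prop double action iso implies conj} and~\ref{prop character double action module} the character of $_u(\Q G)_G$ then cannot agree with that of $_{\pm g}(\Q G)_G$ for any $g\in G$. This is the one-line final step once the lattice exists, not a block-by-block comparison of eigenvalue profiles.
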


\begin{namedtheorem}[Theorem B]
 Let $d$ be an odd positive integer, and let $N\in \N$ be arbitrary. There exist infinitely many pairs of primes $p$ and $q$ such that, for any admissible choice of $\alpha$ and $\beta$, for $G=G(p,q;d;\alpha,\beta)$ there are $u_1,\ldots,u_N \in \UU(\mathbb{Z}G)$, each of order $p \cdot q$, such that neither one of the $u_i$ is conjugate in $\UU(\mathbb{Q}G)$ to an element of the form $\pm g$ for $g \in G$, or to any other $u_j$ for $j\neq i$. In particular, the Zassenhaus conjecture does not hold for such a group $G$.
\end{namedtheorem}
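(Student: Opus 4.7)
My plan is to reduce Theorem B to the general counterexample-detection criterion promised earlier in the paper, verifying that the criterion applies to an infinite family of prime pairs while simultaneously producing many non-conjugate units in each case. Throughout, the rational conjugacy of torsion units will be controlled by partial augmentations via the classical characterisation of Marciniak-Ritter-Sehgal-Weiss: two torsion units of $\mathbb{Z}G$ are conjugate in $\mathbb{Q}G$ if and only if, on each conjugacy class of $G$, all of their powers have identical partial augmentations. The whole argument is thus about exhibiting units whose partial-augmentation profiles are both genuinely distinct from each other and incompatible with any $\pm g$.

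First I would extract from the general criterion the arithmetic conditions on $(p,q)$, which at minimum include $d \mid p-1$ and $d \mid q-1$ and presumably some auxiliary congruence relating $p$ and $q$ (it is essentially such a compatibility condition that singles out the pair $(7,19)$ used in Theorem A). Two applications of Dirichlet's theorem on primes in arithmetic progressions -- first fixing the residue class of $p$, then that of $q$ depending on $p$ -- then yield infinitely many admissible pairs. Since $\mathbb{F}_{p^2}^\times$ and $\mathbb{F}_{q^2}^\times$ are cyclic, admissible choices of $\alpha$ and $\beta$ exist automatically for every such pair.

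Second, to manufacture $N$ pairwise non-conjugate units inside a single such group, I would parametrise the unit construction underlying Theorem A by the eigenvalue data of the $p$-part and $q$-part of the unit, regarded as elements of $N = \mathbb{F}_{p^2} \oplus \mathbb{F}_{q^2}$. Different $A$-orbits of eigenvalues should yield units whose partial-augmentation tuples on the $G$-conjugacy classes lying in $N$ are distinct, and the total number of relevant orbits grows without bound with $p$ and $q$. Consequently, once the primes are large enough the number of admissible orbits exceeds $N$, producing $N$ inequivalent units $u_1,\ldots,u_N$, each of which violates conjugacy to any $\pm g$ for exactly the same reason as the unit of Theorem A.

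The main obstacle I anticipate is the uniform version of Theorem A: checking, as the eigenvalue parameters vary over those $A$-orbits that satisfy the criterion, that one can simultaneously (i) preserve the hypotheses of the criterion and (ii) separate the resulting units by their partial-augmentation tuples, ruling out accidental collapses of rational conjugacy classes. Establishing this uniformly in $(p,q,\alpha,\beta)$ is the technical heart of passing from a single counterexample to an unbounded supply; once it is in hand, Dirichlet's theorem produces the infinite sequence of prime pairs and the orbit count delivers the $N$ units in each.
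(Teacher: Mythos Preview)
Your proposal misses the central analytic ingredient of the paper's argument and misidentifies the mechanism that produces the $N$ units.

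First, the paper's criterion (Theorem~\ref{thm zc Gpqd}) involves inequalities of the form $\sum_{i=1}^d r_{j+i}(p)\cdot \eps_{(\alpha^i,1)^G}\geq 0$, where the integers $r_i(p)$ count how many elements of a certain coset in $\F_{p^2}$ lie in a fixed $\langle \alpha^d\rangle$-orbit. These $r_i(p)$ are not governed by a congruence on $p$; rather, one needs the estimate $r_i(p)=p/d+O(\sqrt{p})$, obtained by interpreting $\sum_i r_i\zeta_d^i$ as a character sum $\sum_{x\in\F_p}\chi(f(x))$ of Weil type and invoking the bound $|\sum_x\chi(f(x))|=\sqrt{p}$. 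This equidistribution is what forces the inequalities to hold once $p$ and $q$ exceed an explicit threshold depending only on $d$ and a bound $M$ on the $|\eps_{g^G}|$ (Corollary~\ref{corollary infinite series}). There is no ``auxiliary congruence relating $p$ and $q$''; the pair $(7,19)$ in Theorem~A works because one checks the $r_i$'s by hand, not because $7$ and $19$ satisfy a special relation. Your plan to secure the hypotheses by Dirichlet alone, without any asymptotic control of the $r_i$, would not go through.

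Second, the number of $G$-conjugacy classes of elements of order $p\cdot q$ in $N$ is exactly $d$ (Proposition~\ref{prop all pq conjugate}\eqref{prop all pq conjugate, K'klassen}); it does not grow with $p$ or $q$. So ``counting $A$-orbits of eigenvalues'' cannot exceed $d$, and your proposed source of $N$ distinct units collapses. In the paper, the $N$ units arise instead by choosing $N$ different integer $d$-tuples $(\eps_{(\alpha^i,1)^G})_{i=1}^d$ summing to $1$ with entries bounded by some $M$ depending on $N$; Corollary~\ref{corollary infinite series} guarantees that every such tuple is realised by a unit once $p$ and $q$ are large enough (relative to $M$), and distinct tuples give distinct rational conjugacy classes by the partial-augmentation criterion you cite. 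Dirichlet then supplies infinitely many $p\equiv 1\pmod{2d}$ and $q\equiv 1\pmod{2d}$ above that threshold.
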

A more precise version of Theorem B, specifying lower bounds for $p$ and $q$ as well as the rational conjugacy classes of the $u_i$, can be found in Corollary~\ref{corollary infinite series}.
The idea that groups like $G(p,q;d;\alpha,\beta)$ might be good candidates for a counterexample to the Zassenhaus conjecture was noted in \cite{MargolisdelRioCW3}. Looking at the various positive results mentioned above, it seems that metabelian groups would have been the next logical step, and people working in the field certainly attempted to prove the Zassenhaus conjecture for metabelian groups, to no avail. 
What is more, the class of metabelian groups provided E.~Dade's counterexample to R.~Brauer's question, which asked whether $KG \cong KH$ for all fields $K$ implies that $G$ and $H$ are isomorphic \cite{Dade}. The second Zassenhaus conjecture mentioned above, which asked if different (normalised) group bases of $\mathbb{Z}G$ are conjugate in $\mathbb{Q}G$, fails for metabelian groups as well \cite{Klingler}. On the other hand, metabelian groups were one of the first classes of groups for which the isomorphism problem on integral group rings was known to have a positive answer \cite{Whitcomb}.

Here is an outline of our strategy to prove Theorems A and B:
\begin{enumerate}
\item If $U$ is a cyclic group of order $n$, then a unit $u\in \UU(RG)$ of order $n$ corresponds to a certain $R(G\times U)$-module $_u(RG)_G$, called a ``double action module''. This is the well-known double action formalism explained in Section \ref{section double action}, and the defining property of double action modules is that their restriction to $G$ is a free $RG$-module of rank one.
This principle works for any commutative ring $R$. 
\item Once we fix a conjugacy class of units of order $n$ in $\UU (\Q G)$, or equivalently a $\Q (G\times U)$-double action module $V={_u(\Q G)_G}$ corresponding to it, we need to find a $\Z(G\times U)$-lattice in $V$ whose restriction to $G$ is free. 
\item Let $\Z_{(p)}$ denote the localisation of $\Z$ at the prime ideal $(p)$. We provide a fairly general construction of double action modules over $\Z_{(p)}(G\times U)$ for groups of the form $N\rtimes A$, where $N$ is abelian. This is done in Section \ref{section semilical counterexample}, and, of course, subject to a whole list of conditions. The double action modules we construct are direct sums of direct summands of permutation modules (see Definition \ref{defi MXpq}), and as a consequence the local version of the counterexample is fairly explicit (see Proposition \ref{prop explicit comp} at the end). 
\item The problem of turning a family of ``compatible'' $\Z_{(p)}(G\times U)$-lattices in $V$ with free restriction to $G$ into 
a $\Z (G\times U)$-lattice in $V$ with the same property can be solved using a rather general local-global principle, provided the centraliser $C_{\UU(\Q G)}(u)$ of 
the unit is big enough (think of $u$ as already being fixed up to conjugacy in $\UU(\Q G)$).  This is done in Section \ref{section existence global}.
\item In the last section we study groups of the form $G(p,q;d;\alpha,\beta)$ as defined above. All of the more general results of the preceding sections become explicit and elementarily verifiable in this situation.
We use the general result of that section, Theorem~\ref{thm zc Gpqd}, to prove Theorems A and B.
\end{enumerate}

In regard to future research, it seems worth pointing out that many variations and weaker versions of the Zassenhaus conjecture remain open. An overview of the weaker forms of the conjecture can be found in \cite{MargolisdelRioPAP}. In particular, the question if the orders of torsion units of augmentation one in $\UU(\mathbb{Z}G)$ coincide with the orders of elements in $G$ remains open. It also might still be true that if $u \in \UU(\mathbb{Z}G)$ is a torsion unit then $u$ is conjugate in $\UU(\Q H)$ to $\pm g$ for some $g\in G$, where $H\supseteq G$ is some larger group containing $G$.

Going in a different direction,  the $p$-version of even the strongest of the three Zassenhaus conjectures remains open. This variation asks if it is true that any $p$-subgroup of $\UU(\mathbb{Z}G)$ consisting of elements of augmentation one is conjugate in $\UU(\mathbb{Q}G)$ to a subgroup of $G$. This is sometimes called ``(p-ZC3)'' or the ``Strong Sylow Theorem'' for $\mathbb{Z}G$. An overview of results relating to this problem can be found in \cite{BaechleKimmerleMargolisDFG}. For the counterexample to the Zassenhaus conjecture presented in the present article it is of fundamental importance that the order of the unit is divisible by at least two different primes.

Throughout the paper we are going to use the following notation, most of which is quite standard. 
\begin{namedtheorem}[Notation and basic definitions]\normalfont
	\begin{enumerate}
	\item Let $G$ be a finite group and let $U$ be a subgroup of $G$. For a character $\chi$ of $U$ we write $\chi\uparrow^G_U$ for the induced character, and for a character $\psi$ of $G$ we write $\psi|_U$ for the restriction to $U$. The trivial character of $G$ is denoted by $1_G$. 
	
	\item For a prime number $p$ we denote by $G_p$ a Sylow $p$-subgroup of $G$ and by $g_p$ the $p$-part of an element $g \in G$. The conjugacy class of $g \in G$ is denoted by $g^G$. We also use $G_{p'}$ to denote a $p'$-Hall subgroup of $G$ (this is used only for nilpotent $G$) and $g_{p'}$ for the $p'$-part of $g$.
	
	\item Let $G$ be a finite group and let $H_1$ and $H_2$ be subgroups of $G$ such that $G=H_1\times H_2$.
	If $\chi_1$ and $\chi_2$ are characters of $H_1$ and $H_2$, respectively, then $\chi_1\otimes \chi_2$ denotes the corresponding character of $G$. Similarly, if $L_1$ and $L_2$ are $RG$-modules for some commutative ring $R$,
	$L_1\otimes L_2$ denotes $L_1\otimes_R L_2$ with the natural $RG$-module structure.
	
	\item We write ``$\sum_{g^G}$'' to denote a sum ranging over a set of representatives of the conjugacy classes of $G$. If $G$ acts on a set $H$ we write ``$\sum_{h^G, h \in H}$'' for the sum ranging over representatives of the $G$-orbits in $H$.
	\item If $G$ and $H$ are groups, and $H$ acts on $G$ by automorphisms,  we denote by $\Irr_{\Q}(G)/H$ the set of $H$-orbits of irreducible rational characters of $G$. We write ``$\sum_{\varphi \in \Irr_{\Q}(G)/H}$'' for a sum ranging over representatives of these orbits.
	\item For a cyclic group $U = \langle c \rangle$ and an element $g \in G$ we define
	\begin{equation}
	[g] = \langle (g, c) \rangle \leq G\times U
	\end{equation}
	We will often use the fact that $[g]_p=\langle (g_p, c_p) \rangle$ and 
	$[g]_{p'}=\langle (g_{p'}, c_{p'}) \rangle$ for all primes $p$.
	
	\item  If $R$ is a ring and $M$ is an $R$-module we write $M^{\oplus n}$ for the direct sum of $n$ copies of $M$.
	\item If $R$ is a ring, $M$ is an $R$-module and $X\subseteq M$ is an arbitrary subset, we write $R\cdot X$ for the $R$-module generated by $X$.
	\item Let $R$ be a commutative ring and let 
	$u =\sum_{g \in G}r_g\cdot  g$
	be an arbitrary element of $RG$. 
	Then 
	$$\varepsilon_{g^G}(u) = \sum_{h \in g^G} r_h$$
	is called the \emph{partial augmentation} of $u$ at $g$.
	\end{enumerate}
\end{namedtheorem}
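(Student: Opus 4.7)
The final statement is a block labelled \emph{Notation and basic definitions}. It is an enumerated list of conventions (induced and restricted characters, the trivial character $1_G$, Sylow and $p$-part notation, conjugacy-class sums, external tensor products of characters and modules over a product group, Galois-orbit sums of rational characters, the cyclic subgroup $[g]=\langle (g,c)\rangle$ of $G\times U$, the shorthand $M^{\oplus n}$, the generated submodule $R\cdot X$, and partial augmentations $\varepsilon_{g^G}$) rather than a mathematical assertion, so there is no theorem to prove. A reader's only job here is to register the symbols and move on; I would treat the block as definitional scaffolding for Sections~\ref{section double action}--5 and verify on first use that each notation is consistent with the way it is employed later (for instance, that $\sum_{\varphi\in \Irr_\Q(G)/H}$ is well-defined because the Galois action of $H$ on $\Irr_\Q(G)$ permutes irreducible rational characters).

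The only clause in the block that is phrased as a fact to be used rather than as a pure definition is the parenthetical remark in item (6): for a cyclic $U=\langle c\rangle$ and $g\in G$, one has $[g]_p=\langle (g_p,c_p)\rangle$ and $[g]_{p'}=\langle (g_{p'},c_{p'})\rangle$. The argument is one line. An element $x$ of a finite group of order $n=p^a m$ with $\gcd(p,m)=1$ satisfies $\langle x\rangle=\langle x_p\rangle\times\langle x_{p'}\rangle$, and $x_p$, $x_{p'}$ are polynomials in $x$ via the Bezout identity $1=\alpha p^a+\beta m$, giving $x_p=x^{\beta m}$ and $x_{p'}=x^{\alpha p^a}$. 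Applying this coordinatewise to $(g,c)\in G\times U$, where the $p$-part of a pair is the pair of $p$-parts, yields $(g,c)_p=(g_p,c_p)$ and $(g,c)_{p'}=(g_{p'},c_{p'})$; since Sylow decomposition of a cyclic group is intrinsic, this identifies the Sylow $p$- and $p'$-subgroups of $[g]$ as claimed.

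Beyond this one-line check, there is nothing to establish. Consequently my ``proof plan'' is degenerate: parse the list, confirm the internal consistency of item~(6), and proceed to the substantive content that follows, where the real work of constructing the double action module and the lattice inside it will begin.
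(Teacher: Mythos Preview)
Your reading is correct: the block is purely definitional and the paper offers no proof for it, so there is nothing to compare. Your brief verification of the Sylow claim in item~(6) actually goes beyond what the paper does, which simply asserts the fact without justification.
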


\section{Double action formalism}\label{section double action}

The ``double action formalism'' (see, for instance, \cite[Section 38.6]{SehgalBook}) is a commonly used way of studying the Zassenhaus conjecture and other questions relating to units in group algebras via certain bimodules, the so-called ``double action modules''. In this section we give a short (but complete, at least for our purposes) overview of this formalism. For the rest of this section 
let $G$ be a finite group and let $U=\langle c \rangle$ be a cyclic group of order $n\in \N$. By $R$ we denote an arbitrary commutative ring.

\begin{defi}
	\begin{enumerate}
		\item Given a unit $u\in \UU(RG)$ satisfying $u^n=1$ we define an $R (G\times U)$-module ${_u}(RG)_G$ as follows: as an $R$-module, ${_u}(RG)_G$ is equal to $RG$, and the (right) action of $G\times U$ is given by
		\begin{equation}
			{_u (RG)_G}\times (G\times U) \longrightarrow {_u(RG)_G}:\ (x, (g, c^i)) \mapsto (u^\circ)^i\cdot x \cdot g
		\end{equation} 
		where the product on the right hand side of the assignment is taken within the ring $RG$, and $-^\circ:\ RG \longrightarrow RG: g \mapsto g^{-1}$ denotes the standard involution on $RG$.
		We call this $R(G\times U)$-module the \emph{double action module}
		associated with the unit $u$.
		\item An $R(G\times U)$-module $M$ is called \emph{$G$-regular} if 
		$M|_{G}$ is free of rank one as an $RG$-module (that is, it is isomorphic to $RG$ considered as a right module over itself).
	\end{enumerate}
\end{defi}

A double action module is clearly $G$-regular, but it turns out that the converse is true as well:
\begin{prop}\label{prop double action iso implies conj}
	\begin{enumerate}
		\item If $M$ is a $G$-regular $R(G\times U)$-module, then $M\cong {_u(RG)_G}$ for some unit 
		$u\in \UU(RG)$ with $u^n=1$.
		\item If $u,v\in\UU(RG)$ are two units satisfying $u^n=v^n=1$, then
		$${_u(RG)_G}\cong{_v(RG)_G}$$
		if and only if $u$ and $v$ are conjugate inside $\UU(RG)$.
	\end{enumerate}
\end{prop}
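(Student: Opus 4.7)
The plan is to exploit the standard identification $\End_{RG}(RG) \cong RG$ (where $RG$ is regarded as a right module over itself), under which an element $w \in RG$ corresponds to the left-multiplication map $x \mapsto w \cdot x$. This identification converts both parts of the proposition directly into unit-theoretic statements inside $RG$.

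For part (1), I would fix an isomorphism $\psi \colon M|_G \xrightarrow{\sim} RG$ of right $RG$-modules and transport the $U$-action on $M$ through $\psi$. Since $G$ and $U$ commute inside $G \times U$, the resulting action of $(1,c)$ on $RG$ is right $RG$-linear, hence given by left multiplication by a unique element $w \in RG$. The relation $(1,c)^n = 1$ forces $w^n = 1$, so $w \in \UU(RG)$. Because $\circ$ is an anti-involution of $RG$, the element $u := w^\circ$ also satisfies $u^n = 1$, and unwinding the definition of ${_u(RG)_G}$ shows that $\psi$ then becomes an isomorphism $M \cong {_u(RG)_G}$ of $R(G \times U)$-modules.

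For part (2), any isomorphism $\phi \colon {_u(RG)_G} \to {_v(RG)_G}$ is in particular right $RG$-linear, so $\phi(x) = a \cdot x$ for a unique $a \in \UU(RG)$. Compatibility of $\phi$ with the action of $(1,c)$, evaluated at $x = 1$, gives $a u^\circ = v^\circ a$. Applying $\circ$ to both sides, and using that it reverses the order of multiplication, yields $v = (a^\circ)^{-1} \, u \, a^\circ$, exhibiting $u$ and $v$ as conjugate inside $\UU(RG)$. Conversely, given $v = b u b^{-1}$, running the same calculation in reverse shows that $x \mapsto (b^\circ)^{-1} \cdot x$ defines the required $R(G \times U)$-module isomorphism.

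The only point requiring care is bookkeeping for the anti-involution $\circ$, which causes conjugacy on the ring side to be implemented on the module side via $a^\circ$ rather than $a$ itself. Beyond this I anticipate no substantive obstacle: everything rests on the identification of $\End_{RG}(RG)$ with $RG$ by left multiplication.
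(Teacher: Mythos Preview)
Your proposal is correct and follows essentially the same approach as the paper's proof: both arguments rest on the identification $\End_{RG}(RG)\cong RG$ via left multiplication, define $u$ as $w^\circ$ where $w$ implements the transported $c$-action, and for part~(2) both reduce the isomorphism to the equation $a\,u^\circ=v^\circ\,a$ with $a=\phi(1)$, then apply $\circ$. The only cosmetic difference is that the paper phrases part~(1) via the explicit formula $u=\varphi(\varphi^{-1}(1)\cdot c)^\circ$ and verifies invertibility of $\varphi(1)$ separately, whereas you invoke $\operatorname{Aut}_{RG}(RG)\cong\UU(RG)$ directly; these are equivalent.
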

\begin{proof}
	Assume that $M$ is $G$-regular. Then we may choose an isomorphism of $RG$-modules
	$\varphi:\ M \longrightarrow RG$, where we view $RG$ as a right module over itself.
	As before, let $-^\circ:\ RG \longrightarrow RG:\ g \mapsto g^{-1}$ denote the standard involution of the group algebra, and define $u=\varphi(\varphi^{-1}(1)\cdot c)^\circ$. Then we have, for all $m\in M$,
	$$
		\varphi(m\cdot c) = \varphi(\varphi^{-1}(1)\cdot \varphi(m)\cdot c) = \varphi(\varphi^{-1}(1)\cdot c)\cdot \varphi(m)=u^\circ\cdot \varphi(m)
	$$ 	
	where we made use of the fact that $m=\varphi^{-1}(1)\cdot \varphi(m)$. It now follows immediately from the above that $u^n=1$, and the map $M\longrightarrow {_u(RG)_G}:\ x \mapsto \varphi(x)$ is easily seen to be an isomorphism of $R(G\times U)$-modules.
	
	Let us now prove the second part of the proposition. To this end, fix an isomorphism 
	$\varphi:\ {_u(RG)_G} \longrightarrow {_v(RG)_G}$. Then $\varphi(1)\cdot u^\circ =\varphi(1\cdot u^\circ)=\varphi(u^\circ) = \varphi(1\cdot c) = \varphi(1)\cdot c=v^\circ \cdot \varphi(1)$. As an equation purely in the ring $RG$ this yields $u\cdot \varphi(1)^\circ=\varphi(1)^\circ\cdot v$. So it only remains to show that $\varphi(1)^\circ$ is an invertible element of $RG$, which follows from the fact that $\varphi(1)$ generates ${_v(RG)_G}$ as an $RG$-module.  
\end{proof}

As we have seen so far, double action modules are in one-to-one correspondence with conjugacy classes of elements of $\UU (RG)$ whose order divides $n$. Evidently this means that each property of torsion units should have a counterpart in the language of double action modules. 
An important tool in the study of the Zassenhaus conjecture is the criterion  given in \cite[Theorem 2.5]{MRSW87}. It states that a unit $u \in \UU(\mathbb{Z}G)$ of finite order and augmentation one is conjugate in $\UU(\mathbb{Q}G)$ to an element of $G$ if and only if $\varepsilon_{g^G}(u^i) \geq 0$
for all $g\in G$ and all $i\geq 0$.
In particular, finding a counterexample to the Zassenhaus conjecture is equivalent to finding a unit $u$ of finite order and augmentation one which has a negative partial augmentation.
Hence it is important for us to have a way of recovering the partial augmentations of a unit from the corresponding double action module.

\begin{prop}\label{prop character double action module}
	Let $u\in \UU(RG)$ be a unit satisfying $u^n=1$. Let 
	\begin{equation}
	\theta_u:\ G\times U \longrightarrow R
	\end{equation}
	denote the character of the $R(G\times U)$-module $_u(RG)_{G}$. Then 
	\begin{equation}
	\theta_u((g,c^i))=|C_G(g)| \cdot \eps_{g^G}(u^i)
	\end{equation}
\end{prop}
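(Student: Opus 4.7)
My plan is to compute the character directly using the basis $G$ of the free $R$-module $_u(RG)_G$, exploiting the explicit description of the $R(G\times U)$-action given in the definition of the double action module.

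First I would expand $u^i$ in the group basis as $u^i=\sum_{h\in G}r_h\cdot h$, so that $(u^i)^\circ=\sum_{h\in G}r_h\cdot h^{-1}$, and then note that the effect of $(g,c^i)$ on a basis element $y\in G$ of $_u(RG)_G$ is
\begin{equation}
y\mapsto (u^\circ)^i\cdot y\cdot g=\sum_{h\in G}r_h\cdot h^{-1}yg.
\end{equation}
The diagonal coefficient (the coefficient of $y$ on the right hand side) is therefore $r_{ygy^{-1}}$, since $h^{-1}yg=y$ forces $h=ygy^{-1}$.

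Summing the diagonal entries over the basis $G$ gives
\begin{equation}
\theta_u((g,c^i))=\sum_{y\in G}r_{ygy^{-1}}.
\end{equation}
Now I would invoke the orbit-stabiliser theorem: as $y$ runs over $G$, the element $ygy^{-1}$ traverses the conjugacy class $g^G$, each element being attained exactly $|C_G(g)|$ times. Regrouping the sum accordingly yields
\begin{equation}
\theta_u((g,c^i))=|C_G(g)|\sum_{h\in g^G}r_h=|C_G(g)|\cdot \varepsilon_{g^G}(u^i),
\end{equation}
which is the claimed formula.

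There is essentially no obstacle in this proof; it is a bookkeeping exercise once one has set up the action correctly. The only subtle point is to track the inverse in $(u^\circ)^i$ carefully and to make sure the conjugation direction in $h=ygy^{-1}$ is right — picking the opposite convention would still give the orbit $g^G$, so the final statement is unaffected, but it is worth writing out once to be safe.
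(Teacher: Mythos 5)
Your proof is correct and follows essentially the same route as the paper: both compute the trace of the action of $(g,c^i)$ in the group basis of $RG$ and count, via orbit--stabiliser, how often each conjugate of $g$ appears on the diagonal. The only cosmetic difference is that the paper first records the trace of the elementary map $x\mapsto h^{-1}xg$ and then sums over $h$ with coefficients $r_h$, whereas you swap the order of summation and read off the diagonal coefficient $r_{ygy^{-1}}$ directly for each basis element $y$.
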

\begin{proof}
	Let us first calculate the trace of the linear map $\mu(g,h):\ RG\longrightarrow RG:\ x \mapsto g^{-1}\cdot x\cdot h$ for arbitrary $g,h\in G$.
	This trace is equal to the number of $y\in G$ such that $g^{-1}\cdot y\cdot h=y$, or, equivalently, 
	$h=y^{-1}\cdot g \cdot y$.  If $g^G=h^G$ then this number is equal to $|C_G(g)|$, otherwise it is zero.
	Now, if
	$$
		u^i=\sum_{g\in G} \alpha_g\cdot g
	$$ 
	then the linear endomorphism of $RG$ induced by $(g, c^i)$ is equal to $\sum_{h\in G} \alpha_h\cdot \mu(h,g)$.
	The character value $\theta_u((g,c^i))$ is the trace of this map, which is equal to
	$$
		\sum_{h\in G} \alpha_h\cdot \Tr(\mu(h,g)) = \sum_{h\in g^G} \alpha_h\cdot |C_G(g)| = |C_G(g)|\cdot \eps_{g^G}(u^i)	
	$$
	as claimed.
\end{proof}

We now turn our attention to the case of rational coefficients, i.e. $R=\Q$. In that situation $G$-regularity of $G\times U$-modules can readily be checked on the level of characters, and Proposition~\ref{prop character double action module} can be used to ascertain whether the corresponding torsion unit in $\UU (\Q G)$ is indeed not conjugate to an element of $G$. 

\begin{prop}\label{prop sum induced partaug}
	Let $g_1,\ldots, g_k$ be pair-wise non-conjugate elements of $G$ whose order divides $n$, and let $a_1,\ldots,a_k\in \Z$ such that $a_1+\ldots+a_k=1$. Assume that
	\begin{equation}\label{eqn theta sum ai}
		\theta = \sum_{i=1}^k a_i \cdot 1\uparrow_{[g_i]}^{G\times U}
	\end{equation}
	is in fact a character of $G\times U$, rather than just a virtual character. Then $\theta$ is the character of $_u{(\Q G)}_G$
	for some $u\in \UU(\Q G)$ satisfying $u^n=1$. Moreover $\eps_{g_i^G}(u)=a_i$ for all $i\in\{1,\ldots,k\}$ and $\eps_{g^G}(u)=0$ whenever $g$ is not conjugate to any of the $g_i$. 
\end{prop}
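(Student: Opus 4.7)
The plan is to realise $\theta$ as the character of a $G$-regular $\Q(G\times U)$-module and then invoke Propositions~\ref{prop double action iso implies conj} and \ref{prop character double action module}. Since $\theta$ is by hypothesis an honest character, some $\Q(G\times U)$-module $M$ affords it. The problem then reduces to showing, for each $i$, that $(1\uparrow_{[g_i]}^{G\times U})|_G$ equals the character of the regular $\Q G$-module; combined with $\sum_i a_i=1$ this gives $\theta|_G$ equal to the regular character, hence $M|_G\cong \Q G$ and $M$ is $G$-regular.

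I would prove this key restriction identity as follows. First, $[g_i]\cap(G\times 1)=1$: an element $(g_i^k,c^k)\in G\times 1$ forces $c^k=1$, i.e.\ $n\mid k$, and then $g_i^k=1$ because $\operatorname{ord}(g_i)\mid n$. Since $|[g_i]|\cdot|G\times 1|=n\cdot|G|=|G\times U|$, the coset space $(G\times U)/[g_i]$ is a free transitive left $(G\times 1)$-set. Consequently the permutation module $\Q[(G\times U)/[g_i]]|_G$ is isomorphic to $\Q G$ and its character is the regular character of $G$. Applying Proposition~\ref{prop double action iso implies conj}(1) now yields a unit $u\in\UU(\Q G)$ with $u^n=1$ and $M\cong {_u(\Q G)_G}$.

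To read off the partial augmentations I would use Proposition~\ref{prop character double action module}, which gives $\theta(g,c)=|C_G(g)|\cdot\eps_{g^G}(u)$, and compute $\theta(g,c)$ directly from \eqref{eqn theta sum ai}. The Frobenius formula for the induced character at $(h,c)$ counts $(y,c^\ell)\in G\times U$ with $(y^{-1}hy,c)\in[g_i]$; matching the second coordinate forces the exponent of the generator of $[g_i]$ to be $1$, so one needs $y^{-1}hy=g_i$. This contributes $n\cdot|C_G(h)|$ when $h$ is $G$-conjugate to $g_i$ and zero otherwise, giving $(1\uparrow_{[g_i]}^{G\times U})(h,c)=|C_G(h)|$ when $h\sim_G g_i$ and $0$ otherwise. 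Pairwise non-conjugacy of the $g_i$ then gives $\theta(g_j,c)=a_j|C_G(g_j)|$ and $\theta(g,c)=0$ whenever $g$ is not conjugate to any $g_i$; comparing with Proposition~\ref{prop character double action module} yields $\eps_{g_j^G}(u)=a_j$ and the stated vanishing.

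I do not anticipate any real obstacle: the argument is essentially a dictionary translation between the character-theoretic hypothesis on $\theta$ and the module-theoretic conclusion of the double action formalism. The only point requiring care is that ${_u(\Q G)_G}$ carries a \emph{right} $G$-action, so the identification of $M|_G$ with the regular module must be read in the right-module sense.
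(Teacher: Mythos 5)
Your proposal follows essentially the same route as the paper: realise $\theta$ by a $\Q(G\times U)$-module, check that its restriction to $G$ is the regular module, and read off the partial augmentations via Proposition~\ref{prop character double action module}. Your coset-counting argument (free action of $G\times 1$ on the $|G|$ cosets of $[g_i]$) is a correct substitute for the paper's Mackey computation, and your evaluation of $1\uparrow_{[g_i]}^{G\times U}$ at $(g,c)$ agrees with the paper's.

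The one step you assert without justification is the very first: that a proper character which is a $\Z$-linear combination of the permutation characters $1\uparrow_{[g_i]}^{G\times U}$ is afforded by a $\Q(G\times U)$-module, not merely by a $\C(G\times U)$-module. This is true but deserves a sentence, and the paper supplies one: write $\theta=\chi_V-\chi_W$ with $V,W$ rational modules sharing no isomorphic simple summand; then $\Hom_{\Q(G\times U)}(V,W)=0$, hence $\Hom_{\C(G\times U)}(\C\otimes_\Q V,\C\otimes_\Q W)=0$, so $\theta$ being a proper character forces $W=0$ and $V$ affords $\theta$. With that inserted, your argument is complete.
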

\begin{proof}
	Let us first prove that $\theta$ can be realised as the character of a $\Q (G\times U)$-module, rather than just a $\C (G\times U)$-module. 
	By definition $\theta$ can be written as the difference of the characters of two $\Q (G\times U)$-modules, say $V$ and $W$. Without loss of generality we may assume that $V$ and $W$ share no isomorphic simple direct summands. But then $\Hom_{\Q (G\times U)}(V,W)=0$, which implies $\Hom_{\C (G\times U)}(\C\otimes_\Q V,\C\otimes_\Q W)=0$. That is, $\C\otimes_\Q V$ and $\C \otimes_\Q W$ share no isomorphic simple direct summands, which means that $\theta$ can only be a proper character if $W=\{0\}$, which means that $V$ is a $\Q (G\times U)$-module affording $\theta$.
	 
	To verify that $\theta$ is the character of  $_u{(\Q G)}_G$
	for some $u\in \UU(\Q G)$ satisfying $u^n=1$, it suffices to show that $\theta|_G$ is equal to the regular character of $G$, which is equal to $1\uparrow_{\{ 1\}}^G$. Note that by Mackey's theorem we have
	$$
		1\uparrow_{[g_i]}^{G\times U}|_G = \sum_{x} 1\uparrow_{[g_i]^x \cap G}^{G} 
	$$
	where $x$ ranges over a transversal for the double cosets $[g_i]\setminus G\times U/G$. Since $[g_i]\cdot G = G\times U$, there is just one such double coset, and therefore
	$$
		1\uparrow_{[g_i]}^{G\times U}|_G = 1\uparrow_{[g_i] \cap G}^{G}
		=1\uparrow_{\{ 1\}}^G 
	$$
	independent of $i$. Combining this fact with \eqref{eqn theta sum ai} we get
	$$
		\theta|_G=\sum_{i=1}^k a_i\cdot 1\uparrow_{\{ 1 \}}^G	=1\uparrow_{\{ 1 \}}^G
	$$
	All that is left to prove now is our claim on the partial augmentations. We know by now that $\theta=\theta_u$ for some $u$, with $\theta_u$ as defined in Proposition~\ref{prop character double action module}. Thus, Proposition~\ref{prop character double action module}  yields  
	\begin{equation}\label{eqn formula eps pa}
		\eps_{g^G}(u)=\frac{\theta((g,c))}{|C_G(g)|}
		=\frac{1}{|C_G(g)|}\sum_{i=1}^k a_i \cdot 1\uparrow_{[g_i]}^{G\times U}((g,c))
	\end{equation}
	The character $1\uparrow_{[g_i]}^{G\times U}$ evaluated on $(g,c)$ is equal to the number of $h\in G$ such that $(g,c)\in [g_i^h]=\langle(g_i^h,c)\rangle$. Since the order of $g_i$ divides $n$, which is the order of $c$, it follows that the projection from $[g_i^h]$ to $U$ is an isomorphism, and hence $(g_i^h,c)$ is the only element of $[g_i^h]$ whose projection to $U$ is $c$. This implies that $(g,c)\in[g_i^h]$ if and only if $g=g_i^h$. It follows that $1\uparrow_{[g_i]}^{G\times U}((g,c))$ is equal to zero if $g^G \neq g_i^G$, and equal to $|C_G(g)|$ if $g^G=g_i^G$. Plugging this back into \eqref{eqn formula eps pa} yields the desired result for the partial augmentations of $u$.
\end{proof}

\section{Local and semi-local rings of coefficients}

Let $R$ be the ring of integers in an algebraic number field $K$.
For a maximal ideal $p$ of $R$ we let $R_{(p)}$ denote the localisation of $R$ at the prime $p$. If
$\pi=\{p_1,\ldots,p_k\}$ is a finite collection of maximal ideals of $R$, we define
$$
	R_\pi = \bigcap_{p\in \pi} R_{(p)},
$$
which is a semi-local ring whose maximal ideals are precisely $p_i\cdot R_\pi$ for $i=1,\ldots,k$.
Testing whether a particular module is a double action module of a unit is particularly easy over 
$R_{(p)}$, as the following proposition shows:
\begin{prop}\label{prop identify local double action}
	Let $M$ be an $R_{(p)}(G\times U)$-module such that $M|_G$ is projective and $K\otimes_{R_{(p)}} M$ is $G$-regular. Then $M$ is $G$-regular.
\end{prop}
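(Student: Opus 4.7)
The plan is to deduce the statement from the classical principle that over a discrete valuation ring, the isomorphism class of a finitely generated projective group-ring module is determined by its character after extending scalars to the fraction field. This ultimately rests on the Krull--Schmidt theorem over a semiperfect ring and on Brauer's theorem on the decomposition matrix.

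First, I would observe that $M|_G$ is a finitely generated projective $R_{(p)}G$-module: finite generation follows because $M|_G$ is $R_{(p)}$-torsion-free (being projective) and $K\otimes_{R_{(p)}} M$ is finite-dimensional over $K$. Since $R_{(p)}$ is a commutative Noetherian local ring, any $R_{(p)}$-order is semiperfect; in particular the Krull--Schmidt theorem holds for finitely generated projective $R_{(p)}G$-modules. Writing $k = R_{(p)}/pR_{(p)}$, the indecomposable finitely generated projective $R_{(p)}G$-modules are in bijection with the isomorphism classes of simple $kG$-modules via $P(S)\mapsto k\otimes_{R_{(p)}} P(S)$, which is the projective cover of $S$ over $kG$.

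I would then decompose $M|_G \cong \bigoplus_S m_S \cdot P(S)$ with $m_S \in \Z_{\geq 0}$, the sum ranging over isomorphism classes of simple $kG$-modules. The free module $R_{(p)}G$ itself decomposes as $\bigoplus_S (\dim_k S)\cdot P(S)$, so it suffices to establish $m_S = \dim_k S$ for all $S$. Tensoring with $K$ yields
\[
K\otimes_{R_{(p)}} M|_G \;\cong\; \bigoplus_S m_S \cdot (K\otimes_{R_{(p)}} P(S)), \qquad KG \;\cong\; \bigoplus_S (\dim_k S)\cdot (K\otimes_{R_{(p)}} P(S)).
\]
The hypothesis that $K\otimes_{R_{(p)}} M$ is $G$-regular identifies these two $KG$-modules. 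By Brauer's theorem that the decomposition matrix of $G$ at $p$ has linearly independent columns, the $KG$-characters of $K\otimes_{R_{(p)}} P(S)$ for pairwise non-isomorphic $S$ are $\Z$-linearly independent; comparing multiplicities therefore forces $m_S = \dim_k S$ for every $S$, whence $M|_G \cong R_{(p)}G$ and $M$ is $G$-regular.

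The main substantive ingredient is Brauer's theorem that the decomposition matrix has full column rank; without it one could not rule out that different multiplicity vectors $(m_S)$ produce the same $KG$-character. Everything else is bookkeeping built on the semiperfectness of $R_{(p)}G$, which furnishes both the Krull--Schmidt decomposition of $M|_G$ into principal indecomposable modules and the bijection between indecomposable projectives and simple modules over the residue field $k$.
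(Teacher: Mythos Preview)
Your proof is correct and follows essentially the same route as the paper's: both reduce the claim to the standard fact that two finitely generated projective $R_{(p)}G$-modules are isomorphic once their $K$-characters agree, which in turn rests on the decomposition matrix having full rank. The paper simply cites this as \cite[Corollary~18.16]{CurtisReinerI}, whereas you unpack it via the Krull--Schmidt decomposition into the $P(S)$'s and the linear independence of their characters; the mathematical content is identical.
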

\begin{proof}
	By assumption $M|_G$ is projective and its character is equal to the character of the regular $R_{(p)}G$-module. 
	We need to show that this implies that $M|_G$ is isomorphic to the regular $R_{(p)}G$-module. This follows from the fact that two projective $R_{(p)}G$-modules are isomorphic if and only if their characters are the same, a consequence of the fact that the decomposition matrix of a finite group has full row rank (see \cite[Corollary 18.16]{CurtisReinerI} for the precise statement we are using).
\end{proof}

Constructing a $G$-regular $R_\pi (G\times U)$-module is actually equivalent to constructing a $G$-regular
$R_{(p)}(G\times U)$-module for each $p\in \pi$ in such a way that all of these modules have the same character. 

\begin{prop}\label{prop local to semilocal}
	Let $\Lambda$ be an $R$-order in a finite-dimensional semisimple $K$-algebra $A$ and let $V$ be a finite-dimensional $A$-module.
	\begin{enumerate}
		\item Assume that we are given full $R_{(p)}\Lambda$-lattices $L(p)\leq V$ for each $p\in\pi$. Then
		$$
			L=\bigcap_{p\in \pi} L(p)
		$$
		is an $R_\pi\Lambda$-lattice in $V$ with the property that 
		$R_{(p)}L=L(p)$ for each $p\in \pi$.
		\item Given two $R_\pi \Lambda$-lattices $L$ and $L'$ in $V$, we have $L\cong L'$ if and only if $R_{(p)}L\cong R_{(p)}L'$ for each $p\in \pi$. 
	\end{enumerate}
\end{prop}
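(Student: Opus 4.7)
\textbf{Proof plan for Proposition~\ref{prop local to semilocal}.}

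For part (1), the plan is first to verify that $L := \bigcap_{p\in\pi} L(p)$ is a full $R_\pi\Lambda$-lattice in $V$, and then to establish the localisation identities $R_{(p)}L = L(p)$. Fix any full $R\Lambda$-lattice $M \leq V$. For each $p\in\pi$, both $L(p)$ and $R_{(p)}M$ are full $R_{(p)}$-lattices in $V$, so there exist nonzero $s_p, t_p \in R$ with $s_p L(p) \subseteq R_{(p)}M$ and $t_p R_{(p)} M \subseteq L(p)$. Setting $s = \prod_p s_p$ and $t = \prod_p t_p$, and combining with the standard prime-avoidance identity $R_\pi M = \bigcap_{p\in\pi} R_{(p)}M$, I obtain the sandwich $tR_\pi M \subseteq L \subseteq s^{-1}R_\pi M$ between two full $R_\pi$-lattices. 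Noetherianity of $R_\pi$ then forces $L$ to be finitely generated, and since $\Lambda$- and $R_\pi$-stability are inherited from the defining intersection, $L$ is a full $R_\pi\Lambda$-lattice in $V$.

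For the identity $R_{(p)}L = L(p)$ the inclusion ``$\subseteq$'' is immediate from $L \subseteq L(p)$. For the reverse, given $v \in L(p)$, I aim to find $r \in R_\pi$ with $r \notin pR_\pi$ and $rv \in L(q)$ for every $q\in\pi$; this then yields $v = r^{-1}(rv) \in R_{(p)}L$. The condition at $q=p$ is automatic because $L(p)$ is an $R_{(p)}$-module and $R_\pi \subseteq R_{(p)}$. For each $q \in \pi \setminus \{p\}$ there exists $N_q \geq 0$ with $\pi_q^{N_q}v \in L(q)$ (for $\pi_q$ a uniformiser at $q$), since $L(q)$ is a full $R_{(q)}$-lattice. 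Weak approximation in the semi-local Dedekind domain $R_\pi$ then produces $r_q \in R_\pi$ with $v_{qR_\pi}(r_q) = N_q$ and $v_{q'R_\pi}(r_q) = 0$ for every $q' \in \pi \setminus \{q\}$; the product $r := \prod_{q\neq p} r_q$ pushes $v$ into every $L(q)$ while remaining a unit at $p$.

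For part (2), the forward direction is immediate by applying $R_{(p)} \otimes_{R_\pi} -$. For the converse, since $R_\pi$ is a PID (being a semi-local Dedekind domain), both $L$ and $L'$ are free $R_\pi$-modules of rank $\dim_K V$. The module $H := \Hom_{R_\pi\Lambda}(L, L')$ is finitely generated over $R_\pi$, and flat base change yields $H\otimes_{R_\pi}R_{(p)} \cong H_p := \Hom_{R_{(p)}\Lambda}(R_{(p)}L, R_{(p)}L')$. By hypothesis each $H_p$ contains an isomorphism $\phi_p$, and any $\phi' \in H_p$ with $\phi' \equiv \phi_p \pmod{pH_p}$ is again an isomorphism: indeed $\phi_p^{-1}\phi' = \operatorname{id} + \psi$ with $\psi \in p\End_{R_{(p)}\Lambda}(R_{(p)}L)$, forcing $\det(\phi_p^{-1}\phi') \equiv 1 \pmod p$ to be a unit in $R_{(p)}$. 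By CRT in the semi-local Dedekind domain $R_\pi$, the reduction $H \to \prod_{p\in\pi} H/pH$ is surjective, so a single $\phi \in H$ can be chosen with $\phi \equiv \phi_p \pmod{pH}$ for every $p$. This $\phi$ is an isomorphism at each localisation, hence globally.

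The main obstacle is the localisation identity in part (1): the approximating element $r$ must simultaneously push $v$ into each $L(q)$ for $q \neq p$ while remaining a unit at $p$. This is precisely where the semi-local Dedekind structure of $R_\pi$ — which permits independent control of valuations at each of the finitely many primes in $\pi$ — becomes essential, and the same CRT mechanism powers both the localisation claim in part (1) and the local-to-global assembly of the isomorphism in part (2).
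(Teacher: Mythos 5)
Your proof is correct, and for part (1) it follows essentially the same route as the paper: the same sandwich between multiples of a fixed global lattice to establish that $L$ is a full $R_\pi\Lambda$-lattice, and the same approximation trick (finding an element of $R_\pi$ that is a unit at $p$ but lies deep enough in every other $q\in\pi$ to push $v$ into each $L(q)$) to prove $R_{(p)}L=L(p)$; the paper phrases this with an integer $N$ in a product of large powers of the primes in $\pi\setminus\{p\}$ rather than with valuations, but the mechanism is identical. The only real divergence is in part (2), which the paper simply outsources to \cite[Exercise 18.3]{Reiner}, whereas you supply a complete argument: flat base change for $\Hom$, the observation that a map congruent mod $p$ to an isomorphism of $R_{(p)}$-lattices is again an isomorphism (via the determinant being a unit in the local ring), and CRT over the semi-local Dedekind domain to glue the local isomorphisms into a single $\phi\in\Hom_{R_\pi\Lambda}(L,L')$. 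This is the standard proof of that exercise and is correct; it makes the proposition self-contained at the cost of a little length, and implicitly uses that $L$ is finitely presented over the Noetherian ring $R_\pi\Lambda$ (so that $\Hom$ commutes with localisation), which holds here.
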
 
\begin{proof}
	$L$ is clearly a $R_\pi\Lambda$-module, and in order to show that it is a lattice it suffices to show that it is contained in some $R_\pi$-lattice. If $L'$ is an arbitrary full $R$-lattice in $V$, then for each $p\in \pi$ there is a number $e(p)\in \Z_{\geq 0}$ such that $L(p)\subseteq p^{-e(p)}\cdot R_{(p)}L'$. Let $0\neq N\in \Z$ be a number such that $p^{e(p)} \supseteq N\cdot R$ for each $p\in \pi$. Then $L \subseteq N^{-1}\cdot R_{(p)}L'$ for each $p\in \pi$, and therefore $L\subseteq \bigcap_{p\in \pi} N^{-1}\cdot R_{(p)}L' = N^{-1}\cdot R_\pi L'$, which is an $R_{\pi}$-lattice.
 	
	Now let us prove that $R_{(p)}L=L(p)$ for each $p\in\pi$. Clearly $R_{(p)}L \subseteq L(p)$. On the other hand, 
	if $v\in L(p)$, then there is an integer $N$ such that $N\cdot v \in 
	L(q)$, for all $q \in \pi$ with $q \neq p$, and $N \not\equiv 0 \ ({\rm mod }\ p)$
	(we can take $N$ to be contained in a product of sufficiently large powers of the maximal ideals in $\pi$ different from $p$). By definition, we now have 
	$N\cdot v \in L$, and since $N$ is invertible in $R_{(p)}$ we also have $v=N^{-1}\cdot N\cdot v \in R_{(p)}L$. This implies 
	$L(p)\subseteq R_{(p)}L$, which completes the proof of the first point.
	For the second point see \cite[Exercise 18.3]{Reiner}.
\end{proof}

\section{Locally free lattices and class groups}

As in the previous section let $R$ be the ring of integers in an algebraic number field $K$.
Let $A$ be a finite-dimensional semisimple $K$-algebra, and let $\Lambda$ be an $R$-order in $A$. 
Throughout this section we adopt the following notational convention: if $p$ is a maximal ideal of $R$, and $M$ is an $R$-module, then $M_p$ denotes the $p$-adic completion of $M$. In particular, $K_p$ is a complete field with valuation ring $R_p$, $A_p$ is a finite-dimensional $K_p$-algebra and $\Lambda_p$ is an $R_p$-order in $A_p$. 

Let us first check that no information is lost in passing from the localisations considered in the previous section to the completions we are going to consider now.
If we keep the notation $R_{(p)}$ for the localisation of $R$ at $p$ and $\Lambda_{(p)}=R_{(p)}\cdot \Lambda \subseteq A$, then 
$R_p$ and $\Lambda_p$ can also be viewed as the $p$-adic completions of $R_{(p)}$ and $\Lambda_{(p)}$, respectively. 
\begin{prop}[{\cite[Proposition 30.17]{CurtisReinerII}}]
	Let $M$ and $N$ be finitely generated $\Lambda_{(p)}$-modules. Then
	$$
		M\cong N \quad\textrm{if and only if}\quad M_p \cong N_p
	$$
\end{prop}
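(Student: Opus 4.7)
The forward direction is immediate by applying $-\otimes_{R_{(p)}}R_p$. For the converse, the strategy is to lift the given isomorphism $\varphi:M_p\to N_p$ to a $\Lambda_{(p)}$-linear map $f:M\to N$ whose completion $f_p$ is still an isomorphism, and then to deduce that $f$ itself is an isomorphism by faithful flatness.

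For the lifting, I would first note that $\Lambda_{(p)}$ is Noetherian, because it is a finitely generated module over the Noetherian ring $R_{(p)}$. Consequently $M$ is finitely presented. Choosing a free presentation $\Lambda_{(p)}^{s}\to \Lambda_{(p)}^{r}\to M\to 0$, applying $\Hom_{\Lambda_{(p)}}(-,N)$, and then tensoring with $R_p$ (using flatness of $R_p$ over $R_{(p)}$ and the identification $(-)_p=-\otimes_{R_{(p)}}R_p$ on finitely generated modules) gives the natural isomorphism
\begin{equation}
\Hom_{\Lambda_{(p)}}(M,N)\otimes_{R_{(p)}}R_p \;\cong\; \Hom_{\Lambda_p}(M_p,N_p),
\end{equation}
showing that the image of $\Hom_{\Lambda_{(p)}}(M,N)$ lies densely in $\Hom_{\Lambda_p}(M_p,N_p)$ with respect to the $p$-adic topology.

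Next I would verify that the isomorphisms form an open subset of $\Hom_{\Lambda_p}(M_p,N_p)$. For any $f$ with $(f-\varphi)(M_p)\subseteq p\cdot N_p$ we obtain $\varphi^{-1}f=\id_{M_p}+\delta$ with $\delta(M_p)\subseteq p\cdot M_p$, and the $p$-adically convergent geometric series $\sum_{n\geq 0}(-\delta)^n$ supplies a two-sided inverse. Combining openness with the density from the previous step, I can choose $f\in \Hom_{\Lambda_{(p)}}(M,N)$ such that the completion $f_p$ is an isomorphism.

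Finally, to conclude that $f$ itself is an isomorphism, observe that $\Ker f$ and $\operatorname{coker}f$ are finitely generated $R_{(p)}$-modules whose base change to $R_p$ is $\Ker f_p=0$ and $\operatorname{coker}f_p=0$. Since $R_p$ is faithfully flat over $R_{(p)}$ (both being local with the same residue field $R/p$, and $R_p$ flat over $R_{(p)}$), this forces $\Ker f=0=\operatorname{coker}f$. The main technical point, and expected obstacle, is the Hom-and-completion compatibility underpinning the density argument; once it is in place, the openness-of-isomorphisms step and the faithful-flatness step are routine.
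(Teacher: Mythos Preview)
The paper does not prove this proposition; it merely quotes it from \cite[Proposition~30.17]{CurtisReinerII}. So there is no ``paper's own proof'' to compare against.

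That said, your argument is correct and is essentially the standard one (due to Maranda, and reproduced in Curtis--Reiner). The three ingredients you isolate --- the base-change isomorphism for $\Hom$ over a flat extension when the source is finitely presented, openness of the isomorphisms in the $p$-adic topology via a geometric-series inverse, and faithful flatness of $R_p$ over the Noetherian local ring $R_{(p)}$ --- are exactly the ones used in the textbook proof. One small cosmetic point: in your openness step you verify that $\varphi^{-1}f$ is invertible, which indeed gives that $f=\varphi\circ(\varphi^{-1}f)$ is an isomorphism; it may be worth saying this explicitly rather than leaving it implicit.
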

In particular, if $M$ and $N$ are finitely generated $\Lambda$-modules, then $M_{(p)}\cong N_{(p)}$ as $\Lambda_{(p)}$-modules if and only if $M_p\cong N_p$ as $\Lambda_p$-modules.

Now let us define the protagonist of this section: the locally free class group of $\Lambda$. 
\begin{defi}[{cf. \cite[\S 49A]{CurtisReinerII}}]
	\begin{enumerate}
	\item A right $\Lambda$-lattice $L$ is called \emph{locally free} of rank $n\in \N$ if
	$$
		L_p \cong \Lambda_p^{\oplus n}
	$$
	as right $\Lambda_p$-modules for all maximal ideals $p$ of $R$.
	\item If $L$ and $L'$ are right $\Lambda$-lattices, we say that $L$ and 
	$L'$ are \emph{stably isomorphic} if 
	$$
		L\oplus \Lambda^{\oplus n} \cong L'\oplus \Lambda^{\oplus n}
	$$ 
	for some $n\in \N$.
	\item The \emph{locally free class group} of $\Lambda$, denoted by $\Cl(\Lambda)$,
	 is an additive group whose elements are the stable isomorphism classes $[X]$ of locally free right $\Lambda$-ideals in $A$.
	The group operation on $\Cl(\Lambda)$ is defined as follows:
	if $X$ and $Y$ are locally free right $\Lambda$-ideals in $A$, then 
	there is a locally free right $\Lambda$-ideal $Z$ such that
	$$
		X\oplus Y \cong Z \oplus \Lambda
	$$
	as right $\Lambda$-modules. We define the sum $[X]+[Y]$ to be equal to $[Z]$. 
	\end{enumerate}
\end{defi}

Note that the unit element of $\Cl(\Lambda)$ is $[\Lambda]$.
For the purposes of this article, class groups serve as a means to prove that certain 
$\Lambda$-lattices are free. The reason this works is that most group algebras satisfy the Eichler condition relative to $\Z$, which guarantees that we can infer $X\cong \Lambda$ from $[X]=[\Lambda]$:
\begin{defi}[{cf. \cite[Remark 45.5 (i)]{CurtisReinerII}}]
	We say that $A$ satisfies the \emph{Eichler condition} relative to $R$ if no
	simple component of $A$ is isomorphic to a totally definite quaternion algebra.
\end{defi}

\begin{thm}[Jacobinski Cancellation Theorem {\cite[Theorem 51.24]{CurtisReinerII}}]\label{thm jacobinski}
	If $A$ satisfies the Eichler condition relative to $R$, then any two locally free $\Lambda$-lattices which are stably isomorphic are isomorphic.
\end{thm}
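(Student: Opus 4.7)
This theorem is quoted from \cite[Theorem 51.24]{CurtisReinerII}, and the sketch I would give follows the classical route via ideles and strong approximation for algebraic groups of type $\operatorname{SL}_1$. The plan is to reduce first to the rank-one case, then to identify $\Cl(\Lambda)$ with a suitable idele class group, and finally to use strong approximation to identify two a priori different equivalence relations on ideles.

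For the reduction to rank one, I would invoke Swan's structure theorem for locally free lattices: every locally free $\Lambda$-lattice $L$ of rank $n \geq 2$ admits a decomposition $L \cong \Lambda^{\oplus (n-1)} \oplus X$ for some locally free lattice $X$ of rank one whose stable isomorphism class represents $[L]$ in $\Cl(\Lambda)$. Given stably isomorphic $L$ and $L'$, write them in this form; after localisation at each prime $p$, the lattices $L_p, L'_p$ are free of the same rank, and the local Krull--Schmidt theorem for $\Lambda_p$-lattices lets one cancel the common free summand locally. This reduces the theorem to the statement that two locally free rank-one $\Lambda$-lattices with the same class in $\Cl(\Lambda)$ are isomorphic as $\Lambda$-modules.

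Next I would realise $\Cl(\Lambda)$ idelically. A locally free right $\Lambda$-ideal $X \subseteq A$ is determined by the idele $(x_p)_p$ characterised by $X_p = x_p \cdot \Lambda_p$, and two such ideals are isomorphic as $\Lambda$-modules exactly when their ideles differ by left multiplication by an element of $A^\times$ together with right multiplication by a unit idele in $\prod_p \Lambda_p^\times$. The stable isomorphism class of $X$ corresponds to the same quotient, but with $A^\times$ enlarged to the subgroup of ideles whose reduced norm lies in the image of $\nr$ on unit ideles. The content of the theorem is that these two equivalence relations in fact coincide under the Eichler condition.

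The decisive step, and the main obstacle, is strong approximation for the algebraic group $\operatorname{SL}_1(A) = \ker \nr$. The Eichler condition excludes exactly those simple components of $A$ for which the archimedean completion of $\operatorname{SL}_1$ is compact, namely the totally definite quaternion algebras; these are precisely the components for which strong approximation (relative to the set of archimedean places) fails. When it does hold, $A^\times$ maps densely into the appropriate quotient of the idele group modulo $\ker \nr$, and this density is exactly the surjectivity statement needed to identify the two equivalence relations described above. Outside the Eichler range, Swan's classical non-cancellation examples show that this hypothesis is essential, so the proof cannot hope to avoid this deep input from the arithmetic of semisimple algebras.
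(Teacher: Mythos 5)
The paper does not prove this statement at all: it is quoted verbatim from \cite[Theorem 51.24]{CurtisReinerII} and used as a black box (in the proof of Lemma~\ref{lemma localglobal}), so there is no internal argument to compare yours against. As an outline of the standard external proof, your sketch identifies the right architecture: reduction to rank one via the decomposition $L\cong\Lambda^{\oplus(n-1)}\oplus X$ (which holds for locally free lattices without any Eichler hypothesis, so there is no circularity), the id\`elic description of isomorphism classes as double cosets $\UU(A)\setminus J(A)/U(\Lambda)$ versus stable classes as the quotient by $J_0(A)\cdot\UU(A)\cdot U(\Lambda)$ (Fr\"ohlich's theorem, which the paper records as Theorem~\ref{thm froehlich}), and strong approximation for $\ker(\nr)$ as the reason the two equivalence relations coincide exactly when no simple component is a totally definite quaternion algebra. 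Two points are stated loosely: the appeal to local Krull--Schmidt is idle, since both lattices are already locally free by hypothesis and the genuine difficulty is entirely global; and your phrasing of the ``stable'' equivalence relation (``$A^\times$ enlarged to the subgroup of id\`eles whose reduced norm lies in the image of $\nr$ on unit id\`eles'') is not quite the correct subgroup --- the precise statement is that stable triviality means membership in $J_0(A)\cdot\UU(A)\cdot U(\Lambda)$, and the work is to show this forces membership in $\UU(A)\cdot U(\Lambda)$. The decisive analytic input, strong approximation for $\operatorname{SL}_1(A)$, remains a black box in your sketch just as the whole theorem is a black box in the paper, which is acceptable for a result of this depth but means your proposal is a roadmap rather than a proof.
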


\begin{thm}[{\cite[Theorem 51.3]{CurtisReinerII}}]\label{thm qg eichler}
	If $G$ is a finite group which does not have an epimorphic image isomorphic to either 
	one of the following: 
	\begin{enumerate}
		\item A generalised quaternion group of order $4n$ where $n \geq 2$.
		\item The binary tetrahedral group of order $24$.
		\item The binary octahedral group of order $48$.
		\item The binary icosahedral group of order $120$.
	\end{enumerate}
	then $KG$ satisfies the Eichler condition relative to $R$.
\end{thm}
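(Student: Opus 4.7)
The plan is to prove the contrapositive: assume $KG$ fails the Eichler condition relative to $R$, and deduce that $G$ has an epimorphic image isomorphic to one of the four listed groups. By definition, failure of the Eichler condition means that some Wedderburn simple component $A$ of $KG$ is a totally definite quaternion algebra over its center, a totally real number field $F$. Let $\rho\colon KG \twoheadrightarrow A$ be the corresponding projection. The restriction of $\rho$ to $G\subset KG$ is a group homomorphism landing in $A^\times$; let $N\trianglelefteq G$ denote its kernel. Then $\rho$ descends to $K[G/N]$ and produces an embedding $G/N\hookrightarrow A^\times$.

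Next I would pass to an archimedean completion: because $A$ is totally definite, for any archimedean place $\nu$ of $F$ we have $A\otimes_F F_\nu \cong \mathbb{H}$, the Hamilton quaternions. Composing the embedding $G/N\hookrightarrow A^\times$ with the natural map $A^\times \hookrightarrow (A\otimes_F F_\nu)^\times = \mathbb{H}^\times$ yields a faithful homomorphism $G/N \hookrightarrow \mathbb{H}^\times$. Now I would invoke the classical classification of finite subgroups of $\mathbb{H}^\times$ (which can be derived from the classification of finite subgroups of $\mathrm{SO}(3)$ via the double cover $\mathbb{H}^1 \twoheadrightarrow \mathrm{SO}(3)$): up to conjugacy, every finite subgroup of $\mathbb{H}^\times$ is cyclic, a generalised quaternion group $Q_{4n}$ with $n\geq 2$, or one of the binary tetrahedral, binary octahedral, or binary icosahedral groups.

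To finish, I would rule out the cyclic case: since $A$ is a simple component of $KG$, the $K$-linear span of the image of $G$ in $A$ equals $A$ itself. If $G/N$ were cyclic, its image in $A$ would be a cyclic abelian subgroup of $A^\times$, whose $K$-linear span would be a commutative subalgebra of $A$, contradicting the fact that $A$ is a non-commutative quaternion algebra. Therefore $G/N$ belongs to one of the four remaining families, and since $G/N$ is an epimorphic image of $G$, this is precisely the conclusion we need.

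The main obstacle is the classical classification of finite subgroups of $\mathbb{H}^\times$, which, while well-documented, is a substantive ingredient rather than a formal manipulation. Everything else is routine: identifying $A$ from the Wedderburn decomposition, passing to an archimedean completion of the totally real center, and the small observation that a cyclic image would force $A$ to be commutative.
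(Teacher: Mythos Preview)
The paper does not give its own proof of this theorem; it is quoted verbatim from \cite[Theorem 51.3]{CurtisReinerII} and used as a black box. So there is no ``paper's proof'' to compare against.

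That said, your argument is correct and is essentially the standard one. The key steps---projecting onto the offending simple component, passing to an archimedean completion to land in $\mathbb{H}^\times$, invoking the classification of finite subgroups of $\mathbb{H}^\times$, and ruling out the cyclic case because the image of $G$ must $K$-linearly span the noncommutative algebra $A$---are all sound. Your identification of the classification of finite subgroups of $\mathbb{H}^\times$ as the only substantive ingredient is accurate; everything else is indeed routine. One cosmetic point: you might make explicit that ``totally definite'' forces the center $F$ to be totally real (so that archimedean places of $F$ are real and the completion argument makes sense), but you do implicitly use this and it is part of the standard definition.
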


We now turn our attention to the problem of deciding whether a given locally free $\Lambda$-ideal is trivial in $\Cl(\Lambda)$.

\begin{defi}[{\cite[(49.4)]{CurtisReinerII}}]
	\begin{enumerate}
	\item We define the id\`ele group of $A$ as
	$$
		J(A) = \left\{ (\alpha_p)_p \in \prod_{p} \UU(A_p) \ \Bigg| \  \alpha_p \in \UU(\Lambda_p) \textrm{ for all except finitely many $p$} \right\}
	$$
	where $p$ ranges over all maximal ideals of $R$. If $\alpha=(\alpha_p)_p$ and $\beta=(\beta_p)_p$ are two elements of $J(A)$, then their product $\alpha\cdot \beta$ in $J(A)$ is defined as $(\alpha_p\cdot \beta_p)_p$.
	\item We identify $\UU(A)$ with the subgroup of $J(A)$ consisting of constant id\`eles. 
	\item Define
	$$
		U(\Lambda) = \{ \alpha \in J(A) \ | \ \alpha_p \in \UU(\Lambda_p) \textrm{ for all $p$}  \}
	$$
	This is also a subgroup of $J(A)$.
	\end{enumerate}
\end{defi}
Even though it is not immediately obvious from the definition, $J(A)$ does not depend on the order $\Lambda$ (in fact, if $\Gamma$ is another $R$-order in $A$, then $\Lambda_p=\Gamma_p$ for all except finitely many $p$).

\begin{thm}[{Special case of \cite[Theorem 31.18]{CurtisReinerII}}]
	There is a bijection between the double cosets
	$$
		\UU(A) \setminus J(A) / U(\Lambda)
	$$
	and isomorphism classes of locally free right $\Lambda$-ideals in $A$ given by
	$$
		\UU(A) \cdot \alpha \cdot U(\Lambda) \mapsto A \cap \bigcap_{p} \alpha_p\Lambda_p	
	$$
	where $\alpha=(\alpha_p)_p\in J(A)$. We denote the right hand side of this assignment by $\alpha\Lambda$.
\end{thm}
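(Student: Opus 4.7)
The plan is to construct the assignment $\alpha \mapsto \alpha\Lambda := A \cap \bigcap_p \alpha_p \Lambda_p$ explicitly, verify that it descends to a well-defined map on $\UU(A) \setminus J(A) / U(\Lambda)$, and then establish surjectivity and injectivity using the standard local-global dictionary between full $\Lambda$-lattices in $A$ and their $p$-adic completions (as recorded in Proposition~\ref{prop local to semilocal} for the semi-local case, and in the cited \cite[\S30-31]{CurtisReinerII} in full generality).

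For well-definedness, I first need to verify that $\alpha\Lambda$ really is a locally free right $\Lambda$-ideal. The key observation is that $\alpha_p \in \UU(\Lambda_p)$ for all but finitely many $p$, so $\alpha_p \Lambda_p = \Lambda_p$ outside a finite set $S$, while for $p\in S$ the lattice $\alpha_p \Lambda_p \subseteq A_p$ is sandwiched between scalar multiples of $\Lambda_p$. The local-global principle then produces a unique $\Lambda$-lattice $L \subseteq A$ with $L_p = \alpha_p\Lambda_p$ for all $p$, namely $L = A \cap \bigcap_p \alpha_p\Lambda_p$. By construction $L$ is locally free of rank $1$. Next, invariance of the isomorphism class under the double coset action: if $\beta \in \UU(A)$ and $u = (u_p) \in U(\Lambda)$, then $\beta\alpha_p u_p \Lambda_p = \beta(\alpha_p\Lambda_p)$ since $u_p\Lambda_p = \Lambda_p$, hence $(\beta\alpha u)\Lambda = \beta\cdot(\alpha\Lambda)$, which is isomorphic to $\alpha\Lambda$ as a right $\Lambda$-module via left multiplication by $\beta$.

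For surjectivity, let $X$ be a locally free right $\Lambda$-ideal in $A$. For each $p$ fix an isomorphism $X_p \cong \Lambda_p$ of right $\Lambda_p$-modules; extending scalars to $A_p$ yields an automorphism of $A_p$, which is left multiplication by some $\alpha_p \in \UU(A_p)$, so $X_p = \alpha_p \Lambda_p$. Since $X$ is a full $\Lambda$-lattice in $A$, we have $X_p = \Lambda_p$ for all but finitely many $p$, and for those $p$ one may choose $\alpha_p \in \UU(\Lambda_p)$. Thus $\alpha = (\alpha_p) \in J(A)$, and the local-global principle gives $X = A \cap \bigcap_p X_p = \alpha\Lambda$.

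Injectivity is where the most care is needed, and I expect it to be the main obstacle. Suppose $\varphi : \alpha\Lambda \xrightarrow{\sim} \beta\Lambda$ is a right $\Lambda$-module isomorphism. The point is that any such $\varphi$ extends to a left multiplication by a unit of $A$: indeed, tensoring with $K$ over $R$ yields an automorphism of $A$ as a right $A$-module, which by the classical fact that right $A$-module endomorphisms of $A$ are precisely left multiplications is of the form $x \mapsto \gamma x$ for some $\gamma \in \UU(A)$. Then $\gamma\cdot(\alpha\Lambda) = \beta\Lambda$, and passing to $p$-adic completions gives $\gamma\alpha_p \Lambda_p = \beta_p \Lambda_p$, i.e. $u_p := \beta_p^{-1}\gamma\alpha_p \in \UU(\Lambda_p)$ for every $p$. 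Hence $u := (u_p) \in U(\Lambda)$ and $\gamma\alpha = \beta u$ in $J(A)$, placing $\alpha$ and $\beta$ in the same double coset. The technical nub throughout is the local-global correspondence between $\Lambda$-lattices in $A$ and their families of completions, together with the identification of endomorphisms of $A$ as a right $A$-module with left multiplications; once those are in hand, all four steps fall into place.
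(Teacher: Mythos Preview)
The paper does not prove this statement at all: it is quoted verbatim as a special case of \cite[Theorem~31.18]{CurtisReinerII} and used as a black box. So there is no ``paper's own proof'' to compare against.

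That said, your argument is the standard one and is essentially correct. The four steps --- showing $\alpha\Lambda$ is a full locally free right $\Lambda$-ideal, checking invariance of the isomorphism class under the double-coset action, surjectivity by choosing local generators $\alpha_p$ for a given locally free ideal, and injectivity by lifting a $\Lambda$-isomorphism to a left multiplication by some $\gamma\in\UU(A)$ --- are exactly how the result is established in \cite[\S31]{CurtisReinerII}. One small point worth making explicit: in the surjectivity step you should also observe that $KX=A$ (since $X$ is a \emph{full} lattice), so the extension of the local isomorphism $X_p\cong\Lambda_p$ to $A_p$ really is an automorphism of $A_p$; you implicitly use this but do not state it. Otherwise the sketch is sound.
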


As shown in \cite[Theorem 31.19]{CurtisReinerII} we have, for arbitrary 
$\alpha, \beta \in J(A)$, an isomorphism $\alpha\Lambda \oplus \beta\Lambda \cong \Lambda \oplus \alpha\beta\Lambda$. This shows that there is an epimorphism of groups
$$
	\theta:\ J(A) \twoheadrightarrow \Cl(\Lambda):\ \alpha \mapsto [\alpha\Lambda]
$$
Since $\Cl(\Lambda)$ is commutative by definition, we certainly have 
$[J(A), J(A)]\subseteq \Ker(\theta)$. In \cite{FroehlichClassGroups}, A.~Fr\"ohlich gave an explicit characterisation of the kernel of $\theta$, which will be very useful to us later. 
\begin{defi}[Reduced norms]
	Let $F$ be a field, and let $B$ be a finite-dimensional semisimple
	$F$-algebra. Then there is a decomposition
	$$
		B = B_1\oplus \ldots \oplus B_n
	$$
	where each $B_i$ is a simple $F$-algebra. We may view $B_i$ as a central simple algebra over its centre $Z(B_i)$. In each component we have a reduced norm map
	$$
		\nr_{B_i/Z(B_i)}:\ B_i \longrightarrow Z(B_i)
	$$
	obtained by embedding $B_i$ into $E\otimes_{Z(B_i)} B_i$ for some field extension $E/Z(B_i)$ which splits $B_i$,
	followed by mapping $E\otimes_{Z(B_i)} B_i$ isomorphically onto a full matrix ring over $E$ of the appropriate dimension and then taking the determinant.
	We can then define a reduced norm map on $B$ component-wise. This map will take values in
	$Z(B_1)\oplus \ldots \oplus Z(B_n)=Z(B)$. That is, we get a multiplicative map 
	$$
		\nr_{B/Z(B)}:\ B \longrightarrow Z(B)
	$$
\end{defi}

\begin{defi}
	Define
	$$
		J_0(A) = \{ (\alpha_p)_p\in J(A) \ | \ \nr_{A_p/Z(A_p)}(\alpha_p)=1 \textrm{ for all $p$} \}
	$$
	This is a normal subgroup of $J(A)$.
\end{defi}

\begin{thm}[{\cite[Theorem 1 and subsequent remarks]{FroehlichClassGroups}}]\label{thm froehlich}
	The map
	$$
		\frac{J(A)}{J_0(A)\cdot \UU(A) \cdot U(\Lambda)} \stackrel{\sim}{\longrightarrow} \Cl(\Lambda):\ 
		\alpha\cdot {J_0(A)\cdot \UU(A) \cdot U(\Lambda)} \mapsto [\alpha\Lambda]
	$$
	is an isomorphism of groups.
\end{thm}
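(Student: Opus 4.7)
The plan is to identify the kernel of the surjective homomorphism $\theta \colon J(A) \twoheadrightarrow \Cl(\Lambda)$, $\alpha \mapsto [\alpha\Lambda]$, already established in the discussion preceding the theorem; the statement reduces to the claim $\Ker(\theta) = N$ where $N := J_0(A)\cdot \UU(A) \cdot U(\Lambda)$.

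First I would verify the inclusion $N \subseteq \Ker(\theta)$. The containment $U(\Lambda) \subseteq \Ker(\theta)$ is immediate: if $v = (v_p)_p \in U(\Lambda)$ then $v_p\Lambda_p = \Lambda_p$ for every $p$, so $v\Lambda = \Lambda$. The containment $\UU(A) \subseteq \Ker(\theta)$ follows because for $u \in \UU(A)$ left multiplication by $u^{-1}$ is a right $\Lambda$-module isomorphism $u\Lambda \xrightarrow{\sim} \Lambda$. The non-trivial part is $J_0(A) \subseteq \Ker(\theta)$: given $\alpha \in J_0(A)$, I would decompose $A$ into its Wedderburn components $A_i$ and apply the strong approximation theorem to the algebraic group $SL_1(A_i)$ in each simple factor. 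Strong approximation produces a global element $u \in \UU(A)$ of reduced norm one approximating the finite components of $\alpha$ closely enough that $u^{-1}\alpha \in U(\Lambda)$, whence $\alpha \in \UU(A)\cdot U(\Lambda) \subseteq N$.

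For the reverse inclusion $\Ker(\theta) \subseteq N$, the key is to construct an invariant of stable isomorphism detecting non-triviality modulo $N$. The componentwise reduced norm extends to an id\`ele map $\nr \colon J(A) \to J(Z(A))$, vanishing on $J_0(A)$ by definition and carrying $\UU(A)$ and $U(\Lambda)$ into the principal and locally integral id\`eles of $Z(A)$, respectively. It therefore descends to a homomorphism
\begin{equation}
\overline{\nr} \colon J(A)/N \longrightarrow J(Z(A))/(\nr(\UU(A))\cdot \nr(U(\Lambda))).
\end{equation}
The definition yields injectivity of $\overline{\nr}$ essentially tautologically: if $\nr(\alpha) = \nr(u)\cdot \nr(v)$ for some $u \in \UU(A)$ and $v \in U(\Lambda)$, then $\alpha u^{-1}v^{-1}$ has trivial reduced norm at every place, so lies in $J_0(A)$ and hence $\alpha \in N$. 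The substantive content is that $\overline{\nr}$ factors through $\Cl(\Lambda)$, i.e., that stable isomorphism class of a locally free $\Lambda$-ideal $\alpha\Lambda$ is detected by $\nr(\alpha)$ modulo principal and unit id\`eles. This part I would derive from the Hasse--Schilling--Maass description of the image of $\nr$ on each $\UU(A_i)$, applied component-wise to $A = \bigoplus A_i$.

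The main obstacle is the strong approximation step in Wedderburn factors that fail the Eichler condition, namely the totally definite quaternion algebras, where $SL_1(A_i)$ is compact at a real place and strong approximation breaks down there. Fr\"ohlich's argument reconciles this by exploiting the fact that $J(A)$ as defined in the text uses only the finite places of $K$: the obstruction to surjectivity of $\nr \colon \UU(A_i) \twoheadrightarrow Z(A_i)^\times$ (coming from positivity at the ramified real places) is precisely the correction needed to account for the failure of strong approximation in the definite components, and carefully tracking both effects in the quotient yields the full isomorphism asserted in the theorem.
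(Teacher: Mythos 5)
This theorem is quoted by the paper from Fr\"ohlich's work and not proved there (the paper only ever uses the containment $J_0(A)\subseteq \Ker(\theta)$), so there is no in-paper proof to compare against; I will assess your sketch on its own terms. The decisive gap is precisely in that containment. You propose to apply strong approximation to $SL_1(A_i)$ in each Wedderburn component to produce $u\in\UU(A)$ of reduced norm one with $u^{-1}\alpha\in U(\Lambda)$. If that worked, then under the double coset description $\UU(A)\setminus J(A)/U(\Lambda)$ of isomorphism classes it would show that $\alpha\Lambda$ is \emph{free}, not merely stably free; this conclusion is false in general (Swan's stably free, non-free ideals over quaternionic group rings), and indeed strong approximation for $SL_1(A_i)$ genuinely fails exactly when $A_i$ is a totally definite quaternion algebra, since the norm-one group is then compact at the archimedean places. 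The standard repair is not the one you gesture at in your closing paragraph: one \emph{stabilises}, i.e.\ works with $\alpha\Lambda\oplus\Lambda$ inside $A^{\oplus 2}$ and applies strong approximation to the norm-one subgroup of $\GL_2(A_i)=\UU(M_2(A_i))$, which is never compact at an infinite place because $M_2(A_i)$ is not a division algebra. This is exactly why the class group is built from \emph{stable} isomorphism classes. Your proposed reconciliation --- that the failure of strong approximation in the definite components is compensated by the Hasse--Schilling--Maass positivity obstruction --- conflates two distinct phenomena: the positivity condition at ramified real places is what makes $\Cl(\Lambda)$ a ray class group of the centre, whereas the failure of $SL_1$-strong approximation is what forces ``stably free'' rather than ``free''; they do not cancel.

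The reverse inclusion $\Ker(\theta)\subseteq J_0(A)\cdot\UU(A)\cdot U(\Lambda)$ has the right overall shape (the reduced norm is indeed the complete invariant, and your injectivity argument for $\overline{\nr}$ is fine), but the substantive point --- that $\alpha\Lambda\oplus\Lambda^{\oplus n}\cong\Lambda^{\oplus (n+1)}$ forces $\nr(\alpha)\in\nr(\UU(A))\cdot\nr(U(\Lambda))$ --- is not a consequence of Hasse--Schilling--Maass. What one actually needs is that a stable isomorphism is implemented by some $u\in\GL_{n+1}(A)$ with $\mathrm{diag}(\alpha_p,1,\dots,1)^{-1}\,u\in\GL_{n+1}(\Lambda_p)$ for all $p$, combined with the semilocal $K_1$-fact that $\GL_{n+1}(\Lambda_p)$ is generated by elementary matrices together with $\mathrm{diag}(\UU(\Lambda_p),1,\dots,1)$, whence $\nr(\GL_{n+1}(\Lambda_p))=\nr(\UU(\Lambda_p))$. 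With that lemma supplied, and with the first containment repaired via the rank-two strong approximation argument, your outline would assemble into a correct proof.
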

For our purposes it will suffice to know that for any $\alpha\in J_0(A)$ the corresponding element $[\alpha\Lambda]\in \Cl(\Lambda)$ is trivial.

\section{Semi-local counterexamples}\label{section semilical counterexample}
After these general sections we will now start to work with a more concrete class of groups which will ultimately provide our counterexample.
Let $G$ be a finite group of the form
\begin{equation}
	G = N \rtimes A
\end{equation}
where $N$ is an abelian group. Moreover, let $U=\langle c \rangle$ be a cyclic group such that the exponent of $N$ and the exponent of $U$ coincide. Let
$$
	\eps\OfU:\ G \longrightarrow \Z:\ g \mapsto \eps_{g^G}\OfU
$$
be a class function which vanishes outside of $N$ (the notation for $\eps$ is deliberately chosen to resemble our notation for partial augmentations). When we say that the partial augmentations of a unit $u \in \UU(RG)$ are given by $\varepsilon$, for some commutative ring $R\supseteq \Z $, we mean that $\varepsilon_{g^G}(u) =  \varepsilon_{g^G}$ for all $g \in G$. Define
\begin{equation}\label{eqn def chi}
	\chi = \sum_{g^G} \eps_{g^G}\OfU\cdot 1\uparrow_{[g]}^{G\times U} 
\end{equation}
Note that, a priori, $\chi$
is only a virtual character of $G\times U$.
Assume that all of the following hold:
\begin{enumerate}[label=\textbf{(C.\arabic*})]
\item{\label{cond sum 1}} $\sum_{g^G} \eps_{g^G}\OfU=1$.
\item{\label{cond cent n eq N}}If $\eps_{n^G}\OfU\neq 0$ for some $n\in N$, then 
$C_G(n_p)\cap C_G(n_{p'}^g)=N$ for all $g\in G$ and all primes $p$ dividing the order of $N$.
\item\label{cond xi proper char} For each prime $p$ dividing the order of $N$ we have a decomposition
\begin{equation}\label{eqn decomp xin}
	\chi|_{N\times U} = \sum_{n \in N_p} \xi_n\otimes 1\uparrow_{[n]_p}^{N_p\times U_p}
\end{equation} 
where $\xi_n$ is a proper character of $N_{p'}\times U_{p'}$ for each
$n\in N_p$.
\end{enumerate}

The aim of this section is to prove the following theorem:
\begin{thm}\label{thm existence u semilocal}
	Let $\pi$ be a finite collection of primes. Then,
	under the above assumptions, there exists a $G$-regular $\Z_\pi (G\times U)$-lattice $L$ with
	character $\chi$. Moreover, the partial augmentations of the associated unit $u_\pi \in \UU(\Z_\pi G)$ are given by $\eps\OfU$.
\end{thm}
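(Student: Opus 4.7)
The plan is, for each prime $p\in\pi$, to construct a $G$-regular $\Z_{(p)}(G\times U)$-lattice $L(p)$ with character $\chi$, and then to assemble these via Proposition~\ref{prop local to semilocal} into $L=\bigcap_{p\in\pi}L(p)$, a $\Z_\pi(G\times U)$-lattice whose $p$-localisation recovers each $L(p)$. The $G$-regularity of $L$ then follows from that of each $L(p)$ combined with the second part of Proposition~\ref{prop local to semilocal}, while Proposition~\ref{prop character double action module} together with (C.1) shows that the unit $u_\pi\in\UU(\Z_\pi G)$ associated to $L$ has augmentation one and partial augmentations $\eps_{g^G}(u_\pi)=\eps_{g^G}$ for every $g\in G$.

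For the construction at a fixed $p\in\pi$, by Proposition~\ref{prop identify local double action} it suffices to produce a full $\Z_{(p)}(G\times U)$-lattice with character $\chi$ whose restriction to $G$ is projective. When $p\nmid|G|$ the ring $\Z_{(p)}G$ is semisimple, every lattice is projective, and any full lattice in a $\Q(G\times U)$-module affording $\chi$ works. The essential case is $p\mid|N|$, where I would exploit (C.3): the given decomposition $\chi|_{N\times U}=\sum_{n\in N_p}\xi_n\otimes 1\uparrow_{[n]_p}^{N_p\times U_p}$ is a sum of tensor products of proper characters. Since $p\nmid|N_{p'}\times U_{p'}|$, each $\xi_n$ can be realised by a projective $\Z_{(p)}(N_{p'}\times U_{p'})$-lattice $T_n$, and tensoring $T_n$ with the permutation lattice $\Z_{(p)}[(N_p\times U_p)/[n]_p]$ produces a $\Z_{(p)}(N\times U)$-lattice with character equal to the corresponding summand of $\chi|_{N\times U}$.

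The central difficulty is to assemble these $N\times U$-pieces into a genuine $\Z_{(p)}(G\times U)$-lattice whose character is $\chi$ itself, rather than the larger induced character $\chi|_{N\times U}\uparrow^{G\times U}$. I would build $L(p)$ directly as a sum of direct summands of $\Z_{(p)}(G\times U)$-permutation modules, of the form $T_n\otimes \Z_{(p)}[(G\times U)/K]$ for appropriate subgroups $K\leq G\times U$ extending $[n]_p$, and then verify via Mackey's formula that the total character equals $\chi$. Condition (C.2) is decisive here: it forces $C_G(n)=N$ for every $n$ in the support of $\eps$, and the stronger hypothesis on $C_G(n_p)\cap C_G(n_{p'}^g)$ controls the double cosets arising in the Mackey decomposition, so that only the intended contributions survive. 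As a byproduct, the character calculation also establishes that $\chi$ is a proper character and that $L(p)|_G$ has the character of the regular $\Z_{(p)}G$-module, so $L(p)|_G\cong\Z_{(p)}G$ by Proposition~\ref{prop identify local double action}.

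The hardest step is expected to be this Mackey-theoretic bookkeeping: verifying that the specific direct summands of permutation modules assemble into a module whose total character is precisely $\chi$ on all of $G\times U$, not merely on $N\times U$. This is where conditions (C.2) and (C.3) must work in tandem, with (C.3) supplying the decomposition on $N\times U$ and (C.2) preventing unwanted contributions upon lifting to $G\times U$.
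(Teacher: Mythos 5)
Your overall architecture is exactly the paper's: build a $G$-regular local lattice with character $\chi$ at each prime, identify $G$-regularity via Proposition~\ref{prop identify local double action}, and glue with Proposition~\ref{prop local to semilocal}; the partial augmentations then come from Proposition~\ref{prop sum induced partaug}. You also correctly locate the hard part in the passage from the $N\times U$-level decomposition supplied by \ref{cond xi proper char} to a $\Z_{(p)}(G\times U)$-lattice whose character is $\chi$ itself. But that step is left as a plan, and as sketched it has a genuine quantitative gap. If you realise a whole summand $\xi_n\otimes 1\uparrow_{[n]_p}^{N_p\times U_p}$ by a single lattice $T_n\otimes\Z_{(p)}[(N_p\times U_p)/[n]_p]$ over $N\times U$ and then induce to $G\times U$, the restriction back to $N\times U$ is $[C_G(n):N]$ copies of what you want (Mackey over the normal subgroup $N\times U$ sums over all of $G/N$, not over $C_G(n)\backslash G$). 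The paper's fix is to decompose $\xi_n$ into rational irreducibles $\varphi$, induce the corresponding summands $M([n]_p\times\Ker(\varphi),p,q)$ of permutation modules, and take each with multiplicity
$\mu(\varphi,n)=\frac{(\varphi,\xi_n)}{(\varphi,\varphi)}\cdot\frac{1}{[\,C_G(n)\cap N_G(\Ker(\varphi)):N\,]}$,
where the division by the index exactly compensates the overcount. The nontrivial claim is that these $\mu(\varphi,n)$ are non-negative \emph{integers}; this is precisely where Condition~\ref{cond cent n eq N} is used (it forces $C_G(n)\cap N_G(\Ker\varphi)$ to act semiregularly on the relevant $N_{p'}$-classes, so the index divides the inner product). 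Your appeal to \ref{cond cent n eq N} as "controlling double cosets" gestures at this but does not deliver the integrality, which is the actual content of Lemma~\ref{lemma extending chin}.

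Two smaller remarks. First, you do not need to verify the character on all of $G\times U$ by brute Mackey bookkeeping: since $N\times U$ is normal in $G\times U$ and both $\chi$ and the character of your construction are induced from $N\times U$, Proposition~\ref{prop induced characters determined} reduces the check to the restriction to $N\times U$ — this is the paper's shortcut and it removes most of the bookkeeping you anticipate. Second, your case split on $p\nmid|G|$ is unnecessary: for every prime $q$ one can pick a $p$ dividing $|N|$ with $q\nmid|N_{p'}|$ (take $p=q$ if $q$ divides $|N|$, and any $p$ otherwise), so the single construction covers all localisations uniformly.
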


By \cite[Theorem 3.3]{CliffWeiss} the condition \ref{cond xi proper char} actually implies that there exists a $\mathbb{Z}(N \times U)$-lattice which is locally free over $\mathbb{Z}N$.  We will not use this fact, but it provided the original motivation for this construction. The condition is also studied in \cite{MargolisdelRioCW3}. 

\begin{remark}\label{remark formula xin}
	In \eqref{eqn decomp xin}, the $\xi_n$ are uniquely determined. Namely,
	$$
		\xi_n = \sum_{m \in N_{p'}} \varepsilon_{(m\cdot n)^G}\OfU\cdot 1\uparrow_{[m]_{p'}}^{N_{p'}\times U_{p'}}
	$$
\end{remark}
\begin{proof}
	By Mackey decomposition we have
	$$
	1\uparrow_{[h]}^{G \times U}|_{N\times U} = [C_G(h):N] \sum_{m \in h^G} 1\uparrow_{[m]}^{N \times U}
	$$
	for every $h \in N$.
	Thus we obtain 
	$$
		\chi|_{N \times U} = \sum_{h \in N}[C_G(h):N] \cdot \varepsilon_{h^G}\OfU\cdot 1\uparrow_{[h]}^{N \times U}
	$$
	Note that our assumptions imply that $C_G(h)=C_G(h_p)\cap C_G(h_{p'})$ is equal to $N$ whenever $\eps_{h^G}\OfU\neq 0$. Hence
	$$
		\chi|_{N\times U}=\sum_{h\in N} \eps_{h^G}\OfU\cdot 1 \uparrow_{[h]}^{N\times U} 
	$$
	So, setting
	$$
		\xi_{n} = \sum_{m \in N_{p'}} \varepsilon_{(m\cdot n)^G}\OfU \cdot 1\uparrow_{[m]_{p'}}^{N_{p'}\times U_{p'}} 
	$$
	for every $n\in N_p$ certainly ensures that \eqref{eqn decomp xin} holds. 
	
	To prove that the $\xi_n$'s are uniquely determined as virtual characters, it suffices to show that they can be recovered from $\chi|_{N\times U}$. Let $m,n\in N_p$. Then $$
		1\uparrow_{[m]_p}^{N_p\times U_p}((n,c_p))= \left\{\begin{array}{ll}
		0 & \textrm{if $n \neq m$} \\ 
		|N_p| & \textrm{otherwise}
		\end{array}\right.
	$$ 
	Hence, if $h\in N_{p'}$ and $n\in N_p$, then
	$$
		\chi((h\cdot n, c)) = |N_p|\cdot \xi_n((h, c_{p'}))
	$$
	which shows that $\xi_n$ is determined uniquely by $\chi|_{N\times U}$. 
\end{proof}

\begin{defi}
For a group $X$ we define
\begin{equation}
e(X) = \frac{1}{|X|}\sum_{x\in X} x
\end{equation}
\end{defi}

\begin{defi}\label{defi MXpq}
	Assume that $p$ is a prime dividing the order of $N$ and let $q$ be any prime not dividing the order of $N_{p'}$ (in particular, $q=p$ is a possible choice for $q$).
	Let $X \leq N\times U$ be a subgroup such that $(N_{p'}\times U_{p'})/X_{p'}$ is cyclic. 
	Let $e$ denote the primitive idempotent in the rational group algebra $\Q ((N_{p'}\times U_{p'})/X_{p'})$ corresponding to the 
	unique faithful irreducible representation of $(N_{p'}\times U_{p'})/X_{p'}$ over $\Q$, and denote its preimage in
	$\Q (N_{p'}\times U_{p'})$ by $\widehat e$ (we may choose $\widehat e$ in such a way that $\widehat e \cdot e(X_{p'})=\widehat e$). 
	
	We define a $\Z_{(q)}(N\times U)$-lattice
	\begin{equation}
		M_0(X,p,q) = \Z_{(q)}\uparrow _{X_p}^{N_p\times U_p} \otimes \left( 
		\Z_{(q)}\uparrow _{X_{p'}}^{N_{p'}\times U_{p'}}\cdot \widehat e
		\right)
	\end{equation}
	as well as a $\Z_{(q)}(G\times U)$-lattice 
	\begin{equation}
		M(X,p,q) = M_0(X,p,q)\uparrow _{N\times U}^{G\times U}
	\end{equation}
\end{defi}

\begin{prop}\label{prop character M}
	The character of the $\Z_{(q)}(N\times U)$-lattice $M_0(X,p,q)$ is equal to
	$$
		\psi_{M_0(X,p,q)}=1\uparrow_{X_p}^{N_p\times U_p} \otimes \varphi
	$$
	where $\varphi$ is the unique irreducible rational character of
	$N_{p'}\times U_{p'}$ with kernel $X_{p'}$. 
\end{prop}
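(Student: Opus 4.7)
The plan is to exploit the direct product structure of $N\times U$. Since $N$ is abelian and $U$ is cyclic, both split canonically as $N = N_p\times N_{p'}$ and $U = U_p\times U_{p'}$, and so
$$N\times U = (N_p\times U_p)\times (N_{p'}\times U_{p'}).$$
Under this decomposition, $M_0(X,p,q)$ is an external tensor product: the first tensor factor is acted upon only by $N_p\times U_p$, and the second only by $N_{p'}\times U_{p'}$. Consequently, its character factorises as the external tensor product of the characters of the two factors, so it will be enough to show that these two characters are $1\uparrow_{X_p}^{N_p\times U_p}$ and $\varphi$, respectively.

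The first factor $\Z_{(q)}\uparrow_{X_p}^{N_p\times U_p}$ is literally the permutation module on the cosets of $X_p$ in $N_p\times U_p$, so its character is immediately $1\uparrow_{X_p}^{N_p\times U_p}$. For the second factor I would argue as follows. Because $N_{p'}\times U_{p'}$ is abelian, its rational group algebra is commutative, so the normalisation $\widehat e \cdot e(X_{p'}) = \widehat e$ places $\widehat e$ in the ideal $\Q(N_{p'}\times U_{p'})\cdot e(X_{p'})$, which is naturally identified via inflation with $\Q((N_{p'}\times U_{p'})/X_{p'})$. Under this identification $\widehat e$ corresponds precisely to the primitive central idempotent $e$ cutting out the unique faithful irreducible rational representation of the cyclic quotient. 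Hence $\Q(N_{p'}\times U_{p'})\cdot \widehat e$ is an irreducible $\Q(N_{p'}\times U_{p'})$-module affording (the inflation of) that faithful character, which is by definition $\varphi$. The lattice $\Z_{(q)}\uparrow_{X_{p'}}^{N_{p'}\times U_{p'}} \cdot \widehat e$ is a finitely generated $\Z_{(q)}(N_{p'}\times U_{p'})$-module that spans $\Q(N_{p'}\times U_{p'})\cdot \widehat e$ over $\Q$ (since the induced module spans $\Q\uparrow_{X_{p'}}^{N_{p'}\times U_{p'}} = \Q(N_{p'}\times U_{p'})\cdot e(X_{p'})$, and $e(X_{p'})\cdot \widehat e = \widehat e$), so it is a full $\Z_{(q)}$-lattice with the same character $\varphi$.

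I do not expect any genuine obstacle: the proof is a routine combination of (a) the multiplicativity of characters under external tensor products with respect to the decomposition into $p$- and $p'$-parts, (b) the fact that permutation modules afford induced trivial characters, and (c) the identification of $\widehat e$ as a lift of the primitive central idempotent corresponding to the faithful irreducible rational representation of a cyclic group. The only step that requires any care is tracking the identifications between $\Q(N_{p'}\times U_{p'})\cdot e(X_{p'})$, the induced module $\Q\uparrow_{X_{p'}}^{N_{p'}\times U_{p'}}$, and the quotient algebra $\Q((N_{p'}\times U_{p'})/X_{p'})$, but each of these is standard and is already built into the definition of $\widehat e$.
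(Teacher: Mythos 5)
Your proof is correct and follows essentially the same route as the paper, which simply records that $\varphi$ is afforded by the lattice $\Z_{(q)}\uparrow_{X_{p'}}^{N_{p'}\times U_{p'}}\cdot \widehat e$ and leaves the tensor factorisation implicit. Your write-up just fills in the details of that one-line argument.
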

\begin{proof}
	This follows immediately from the fact that $\varphi$ is afforded by the $\Z_{(q)}(N_{p'}\times U_{p'})$-lattice $\Z_{(q)}\uparrow_{X_{p'}}^{N_{p'}\times U_{p'}}\cdot \widehat e$.
\end{proof}

\begin{remark}\label{remark characters}
	The following description of the character of $M(X,p,q)$ for certain $X$ is useful for explicit computations, even though we do not use it in this article:  
	\begin{enumerate}
		\item If $X_{p'}=N_{p'}\times U_{p'}$, then
		\begin{equation}
			\psi_{M(X,p,q)}= 1\uparrow_{X}^{G\times U}
		\end{equation}
		\item If $(N_{p'}\times U_{p'})/X_{p'}$ is cyclic of order $r$ for some prime $r$, then
		\begin{equation}
		\psi_{M(X,p,q)} = 1\uparrow_{X}^{G\times U}-1\uparrow_{X\cdot (N_{p'}\times U_{p'})}^{G\times U} 
		\end{equation}
	\end{enumerate}
\end{remark}
\begin{prop}\label{prop projectivity}
	If $(X\cap G)_q=\{1\}$, then $M(X,p,q)|_G$ is projective. 
\end{prop}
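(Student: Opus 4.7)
The plan is to reduce the problem to showing that $M_0(X,p,q)|_N$ is projective over $\mathbb{Z}_{(q)}N$, and then to handle the $N_p$- and $N_{p'}$-factors of the tensor product separately, using the hypothesis only in the case $q=p$.

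First, I would apply Mackey's theorem to the induction $M(X,p,q) = M_0(X,p,q)\uparrow_{N\times U}^{G\times U}$ restricted to $G = G\times\{1\}$. Because $U\leq N\times U$, we have $(N\times U)\cdot G = G\times U$, so there is a single double coset, with $(N\times U)\cap G = N$. Hence $M(X,p,q)|_G \cong M_0(X,p,q)|_N\uparrow_N^G$. Since $\mathbb{Z}_{(q)}G$ is a free $\mathbb{Z}_{(q)}N$-module, induction from $N$ to $G$ preserves projectivity, and the problem reduces to showing $M_0(X,p,q)|_N$ is projective over $\mathbb{Z}_{(q)}N$.

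Now $N=N_p\times N_{p'}$ and $\mathbb{Z}_{(q)}N\cong \mathbb{Z}_{(q)}N_p\otimes_{\mathbb{Z}_{(q)}}\mathbb{Z}_{(q)}N_{p'}$, while by definition $M_0(X,p,q) = L_p\otimes_{\mathbb{Z}_{(q)}}L_{p'}$ where $L_p=\mathbb{Z}_{(q)}\uparrow_{X_p}^{N_p\times U_p}$ and $L_{p'} = \mathbb{Z}_{(q)}\uparrow_{X_{p'}}^{N_{p'}\times U_{p'}}\cdot\widehat{e}$. Since a tensor product (over $\mathbb{Z}_{(q)}$) of projective $\mathbb{Z}_{(q)}N_p$- and $\mathbb{Z}_{(q)}N_{p'}$-modules is a projective $\mathbb{Z}_{(q)}N$-module, it suffices to treat each factor separately. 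For $L_{p'}|_{N_{p'}}$, the assumption that $q\nmid |N_{p'}|$ makes $\mathbb{Z}_{(q)}N_{p'}$ a commutative hereditary order (it decomposes as a product of localisations of rings of integers in cyclotomic fields at primes not dividing the relevant orders), so every lattice over it is projective and nothing further needs to be checked.

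The interesting part is $L_p|_{N_p}$. If $q\neq p$, then also $q\nmid |N_p|$ and the same hereditary-order argument applies. The case $q=p$ is where the hypothesis $(X\cap G)_q=\{1\}$ enters. Noting that $X\leq N\times U$ gives $X\cap G = X\cap(N\times\{1\})$, so the hypothesis forces $X_p\cap N_p = \{1\}$ inside $N_p\times U_p$. Applying Mackey in the abelian group $N_p\times U_p$ to the subgroups $X_p$ and $N_p$, and using that conjugation is trivial, I get
\begin{equation}
L_p|_{N_p} \;\cong\; \bigoplus_{d\in X_p\backslash (N_p\times U_p)/N_p}\mathbb{Z}_{(p)}\uparrow_{X_p\cap N_p}^{N_p} \;\cong\; \left(\mathbb{Z}_{(p)}N_p\right)^{\oplus |U_p|/|X_p|},
\end{equation}
which is free (hence projective) precisely because $X_p\cap N_p=\{1\}$.

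The only step requiring genuine care is the last one, the abelian Mackey computation at the prime $p$: one must check the identification $X\cap G = X\cap N$ and then see that the hypothesis translates exactly into the triviality of $X_p\cap N_p$ needed to collapse the induced module to a free one. Everything else is formal (Mackey for induced modules, projectivity of induction, hereditariness of group algebras at primes coprime to the group order).
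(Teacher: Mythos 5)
Your proof is correct. It rests on the same underlying mechanism as the paper's --- Mackey decomposition plus the observation that the hypothesis makes the relevant intersection with $G$ a $q'$-group --- but it is organised quite differently. The paper's proof is a one-liner: $M(X,p,q)$ is a direct summand of the permutation module $\Z_{(q)}\uparrow_X^{G\times U}$ (the idempotent $\widehat e$ lies in $\Z_{(q)}(N_{p'}\times U_{p'})$ because $q\nmid |N_{p'}\times U_{p'}|$), a single application of Mackey to that permutation module restricted to $G$ exhibits it as a sum of modules induced from conjugates of the $q'$-group $X\cap G$, and a direct summand of a projective is projective. You instead restrict in two stages (first to $N\times U$ via the induction step, then to $N$), split $M_0(X,p,q)|_N$ into its $N_p$- and $N_{p'}$-tensor factors, and dispose of the factor carrying $\widehat e$ by noting that every $\Z_{(q)}N_{p'}$-lattice is projective since $q\nmid|N_{p'}|$; the hypothesis is then only invoked for the $N_p$-factor in the case $q=p$, where your abelian Mackey computation correctly identifies $(X\cap G)_q$ with $X_p\cap N_p$ and collapses the restriction to a free module. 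What your route buys is an explicit free decomposition of $M(X,p,q)|_G$ and a transparent localisation of where the hypothesis actually matters (it is vacuous unless $q=p$); what the paper's route buys is brevity, since treating $M(X,p,q)$ as a summand of a permutation module sidesteps both the tensor factorisation and the maximal-order argument at $p'$.
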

\begin{proof}
	This follows from the Mackey formula, as $M(X,p,q)$ is a direct summand of $\Z_{(q)}\uparrow_{X}^{G\times U}$:
	\begin{equation}
		\Z_{(q)}\uparrow_{X}^{G\times U}|_G\cong \bigoplus_{(g,u)} 
		 \Z_{(q)}\uparrow_{X^{(g,u)}\cap G}^G = \bigoplus_{(g,u)} 
		\Z_{(q)}\uparrow _{(X\cap G)^{(g,u)}}^G 
	\end{equation}
	where the summation index $(g,u)$ runs over a transversal of the double cosets $X\setminus (G\times U)/G$. 
	Each summand on the right hand side is induced from a $q'$-subgroup of $G$, and therefore is projective.
\end{proof}

\begin{lemma}\label{lemma extending chin}
	Let $p$ be a prime dividing the order of $N$ and let $n \in N_p$ be some $p$-element of $N$.
	Let $\chi_n$ be the following character of $N\times U$:
	\begin{equation}
	\chi_n = \sum_{C_G(n)\cdot g \in C_G(n)\setminus G} 1\uparrow_{[n^g]_p}^{N_p\times U_p} \otimes \xi_{n}^g
	\end{equation}
	where
	$\xi_{n}$ is the character of $N_{p'}\times U_{p'}$ defined in the beginning of this section (in particular $\xi_n$ is stabilised by $C_G(n)$). 
	
	Then, for any prime $q$ not dividing the order of $N_{p'}$, $\chi_n$ is the restriction to $N\times U$ of the character of the
	$\Z_{(q)} (G\times U)$-lattice
	\begin{equation}\label{eqn formula for M}
	L =\bigoplus_{\varphi \in {\rm Irr}_{\Q}(N_{p'}\times U_{p'})/{C_G(n)}} M([n]_p\times {\rm Ker}(\varphi), p, q)^{\oplus \mu(\varphi, n)}
	\end{equation}
	with
	\begin{equation}
	\mu(\varphi,n) = \frac{(\varphi, \xi_{n})}{(\varphi,\varphi)} \cdot \frac{1}{[C_G(n)\cap N_G({\rm Ker}(\varphi)):N]} \in \Z_{\geq 0}
	\end{equation}
	Moreover, the restriction of $L$ to $G$ is a projective $\Z_{(q)}G$-lattice.
\end{lemma}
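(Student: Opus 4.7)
The plan is to compute the character of $L|_{N\times U}$ directly using Proposition~\ref{prop character M} and Mackey's formula, and to verify that it equals $\chi_n$ by invoking the definition of $\mu(\varphi,n)$ together with the $C_G(n)$-invariance of $\xi_n$. The integrality of $\mu(\varphi,n)$ has to be argued separately, and this is where condition \ref{cond cent n eq N} enters. Projectivity of $L|_G$ will follow immediately from Proposition~\ref{prop projectivity}.

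For the character computation, Proposition~\ref{prop character M} gives
\begin{equation*}
\psi_{M_0([n]_p\times\Ker(\varphi),p,q)}=1\uparrow_{[n]_p}^{N_p\times U_p}\otimes\varphi.
\end{equation*}
Since $N\times U$ is normal in $G\times U$ with quotient isomorphic to $G/N$, Mackey reduces induction-followed-by-restriction to a sum of conjugates:
\begin{equation*}
\text{char}\bigl(M([n]_p\times \Ker(\varphi),p,q)|_{N\times U}\bigr) = \sum_{g\in G/N} 1\uparrow_{[n^g]_p}^{N_p\times U_p}\otimes \varphi^g.
\end{equation*}
I would split the $G/N$-sum by writing each $gN$ as $hg'N$ with $g'$ ranging over representatives of $C_G(n)\setminus G$ and $h$ over $C_G(n)/N$; since $n^{hg'}=n^{g'}$, the sum rewrites as
\begin{equation*}
\sum_{g'\in C_G(n)\setminus G} 1\uparrow_{[n^{g'}]_p}^{N_p\times U_p}\otimes \Bigl(\sum_{h\in C_G(n)/N}\varphi^h\Bigr)^{g'}.
\end{equation*}
Because rational irreducible characters of an abelian group are determined by their kernels, the stabilizer of $\varphi$ in $C_G(n)/N$ is $(C_G(n)\cap N_G(\Ker(\varphi)))/N$, so the inner sum equals $[C_G(n)\cap N_G(\Ker(\varphi)):N]\cdot\sum_{\psi\in\text{Orb}_{C_G(n)}(\varphi)}\psi$. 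Multiplying by $\mu(\varphi,n)$ and summing over orbit representatives $\varphi$, the definition of $\mu$ turns the coefficient into $(\varphi,\xi_n)/(\varphi,\varphi)$, so what remains is precisely the decomposition of $\xi_n$ into $\Q$-irreducibles grouped by $C_G(n)$-orbits. Invariance of $\xi_n$ under $C_G(n)$ ensures that this equals $\xi_n$, and the outer sum over $g'$ then recovers $\chi_n$.

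The main obstacle is showing $\mu(\varphi,n)\in\Z_{\geq 0}$. Nonnegativity is immediate from \ref{cond xi proper char}, as $\xi_n$ is required to be a proper character. For integrality, I would combine Remark~\ref{remark formula xin} with Frobenius reciprocity to identify the multiplicity $(\varphi,\xi_n)/(\varphi,\varphi)$ with $\sum_{m\in N_{p'},\,[m]_{p'}\leq \Ker(\varphi)}\varepsilon_{(mn)^G}$. The group $C_G(n)\cap N_G(\Ker(\varphi))$ acts on this index set while preserving each coefficient $\varepsilon_{(mn)^G}$. Condition \ref{cond cent n eq N} then forces $C_G(n)\cap C_G(m)=N$ whenever $\varepsilon_{(mn)^G}\neq 0$, so the stabilizer of any contributing $m$ equals $N$ and every orbit with a nonzero contribution has size exactly $[C_G(n)\cap N_G(\Ker(\varphi)):N]$. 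Dividing by this index yields the integer $\mu(\varphi,n)$. For projectivity of $L|_G$, setting $X=[n]_p\times\Ker(\varphi)$, an element of $X\cap G$ must have trivial $U$-component, which forces the $[n]_p$-factor to be trivial and the $\Ker(\varphi)$-factor to lie in $N_{p'}$, giving $X\cap G\subseteq N_{p'}$. Since $q\nmid|N_{p'}|$ by hypothesis, $(X\cap G)_q=\{1\}$ and Proposition~\ref{prop projectivity} completes the argument.
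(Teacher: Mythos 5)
Your proposal is correct and follows essentially the same route as the paper's proof: the integrality of $\mu(\varphi,n)$ via Frobenius reciprocity and the semiregularity forced by \ref{cond cent n eq N}, the projectivity via $([n]_p\times\Ker(\varphi))\cap G\subseteq N_{p'}$ and Proposition~\ref{prop projectivity}, and the character identity via Mackey plus the fact that the stabiliser of a rational irreducible character of $N_{p'}\times U_{p'}$ is the normaliser of its kernel. The only cosmetic difference is that you compute from $L|_{N\times U}$ towards $\chi_n$ whereas the paper massages $\chi_n$ into the character of $L|_{N\times U}$; the decomposition and the invariance of $\xi_n$ under $C_G(n)$ are used identically in both directions.
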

\begin{proof}
	We need to prove three things:
	\begin{enumerate}
		\item The $\mu(\varphi, n)$ as defined above are (non-negative) integers.
		\item The lattice $L$ defined in Lemma~\ref{eqn formula for M} restricted to $G$ is projective.
		\item The restriction to $N\times U$ of the character of $L$ is equal to $\chi_n$.
	\end{enumerate}
	
	Recall that
	$$
		\xi_n = \sum_{m \in N_{p'}} \varepsilon_{(m\cdot n)^G}\OfU\cdot 1\uparrow_{[m]_{p'}}^{N_{p'}\times U_{p'}}
	$$
	Take some $\varphi\in \Irr_{\Q}(N_{p'}\times U_{p'})$. Then $\varphi$ is the sum over the Galois conjugacy class of
	some $\varphi_0\in \Irr_{\C}(N_{p'}\times U_{p'})$. In particular, $(\varphi,\varphi)=\varphi(1)$ and 
	$(\varphi, \xi_n)=\varphi(1)\cdot (\varphi_0,\xi_n)$, since $\varphi_0$ is linear and $\xi_n$ rational. 
	By Frobenius reciprocity
	\begin{align}\label{eqn xiphitrick}
	(\varphi_0, \xi_{n}) =& \sum_{m \in N_{p'}} \varepsilon_{(m\cdot n)^G}\OfU\cdot \left(\varphi_0, 1\uparrow_{[m]_{p'}}^{N_{p'} \times U_{p'}}\right) \\
	=& \sum_{m \in N_{p'}} \varepsilon_{(m\cdot n)^G}\OfU \cdot \left(\varphi_0|_{[m]_{p'}}, 1_{[m]_{p'}}\right).
	\end{align}
	Now,
	$$ 
		\left(\varphi_0|_{[m]_{p'}}, 1_{[m]_{p'}}\right) = \left\{ \begin{array}{ll} 1 & \textrm{ if } (m, c_{p'}) \in {\rm Ker}(\varphi_0)={\rm Ker}(\varphi), \\ 0 & \textrm{ otherwise } \end{array} \right. 
	$$
	So for $g \in C_G(n) \cap N_G({\rm Ker}(\varphi))$ we have
	$$
	 \varepsilon_{(m\cdot n)^G}\OfU \cdot \left(\varphi_0|_{[m]_{p'}}, 1_{[m]_{p'}}\right) = \varepsilon_{(m^g\cdot n)^G}\OfU \cdot \left(\varphi_0|_{[m^g]_{p'}}, 1_{[m^g]_{p'}}\right)
	$$
	Hence grouping together elements conjugate by $C_G(n) \cap N_G({\rm Ker}(\varphi))$ we can write
	$$
	(\varphi_0, \xi_{n}) = \sum_{\substack{m^{C_G(n) \cap N_G({\rm Ker}(\varphi))}, \\ m \in N_{p'}}} [C_G(n) \cap N_G({\rm Ker}(\varphi)):N] \cdot \varepsilon_{(m\cdot n)^G}\OfU\cdot \left(\varphi_0|_{[m]_{p'}}, 1_{[m]_{p'}}\right)
	$$
	where we use our assumption that $C_G(n)/N$ acts semiregularly on $m^G$ whenever $\varepsilon_{(m\cdot n)^G}\OfU \neq 0$ (that is Condition~\ref{cond cent n eq N}). 
	It follows that $\mu(\varphi,n)$ is an integer.
	
	The fact that $L|_G$ is projective follows immediately from Proposition \ref{prop projectivity}, since $[n]_p \cap G = \{1\}$ and for each $\varphi$
	\begin{equation}
	([n]_p \times {\rm Ker}(\varphi))\cap G = ([n]_p\cap G) \times ({\rm Ker}(\varphi)\cap G) = {\rm Ker}(\varphi) \cap G 
	\end{equation}
	is a subgroup of $N_{p'}$, and therefore a $q'$-group.
	
	Now let us prove that the character of $L|_{N\times U}$ is equal to $\chi_n$.
	Recall from Proposition \ref{prop character M} that for any $\varphi \in \Irr_{\Q}(N_{p'}\times U_{p'})$ and any $g\in G$
	\begin{equation}
		\psi_{M_0([n^g]_p\times {\rm Ker}(\varphi), p, q)}=1\uparrow_{[n^g]_p}^{N_p\times U_p} \otimes \varphi
	\end{equation}
	We can therefore write $\chi_n$ as follows:
	\begin{equation}
	\begin{array}{rcl}
	\chi_n&=& \displaystyle\frac{1}{[C_G(n):N]}\sum_{gN\in G/N} 1\uparrow_{[n^g]_p}^{N_p\times U_p} \otimes \xi_{n}^g		\\\\
	&=& \displaystyle\displaystyle\frac{1}{[C_G(n):N]}\sum_{\varphi \in {\rm Irr}_\Q(N_{p'}\times U_{p'})} \frac{(\varphi, \xi_{n})}{(\varphi,\varphi)} \sum_{gN\in G/N} \psi_{M_0([n^g]_p\times {\rm Ker}(\varphi)^g,p,q)}\\\\
	&=& \displaystyle\displaystyle\frac{1}{[C_G(n):N]}\sum_{\varphi \in {\rm Irr}_\Q(N_{p'}\times U_{p'})} \frac{(\varphi, \xi_{n})}{(\varphi,\varphi)}\cdot   \psi_{M([n]_p\times {\rm Ker}(\varphi),p,q)}|_{N\times U}\\\\
	&=& \displaystyle\displaystyle\sum_{\varphi \in {\rm Irr}_\Q(N_{p'}\times U_{p'})/{C_G(n)}} \frac{(\varphi, \xi_{n})}{(\varphi,\varphi)}\cdot  \frac{[C_G(n):C_G(n)\cap N_G({\rm Ker}(\varphi))]}{[C_G(n):N]}\cdot \psi_{M([n]_p\times {\rm Ker}(\varphi),p,q)}|_{N\times U}\\\\
	&=& \displaystyle\displaystyle\sum_{\varphi \in {\rm Irr}_\Q(N_{p'}\times U_{p'})/{C_G(n)}} \frac{(\varphi, \xi_{n})}{(\varphi,\varphi)}\cdot  \frac{1}{[C_G(n)\cap N_G({\rm Ker}(\varphi)):N]} \cdot \psi_{M([n]_p\times {\rm Ker}(\varphi),p,q)}|_{N\times U}\\\\
	&=& \displaystyle\displaystyle\sum_{\varphi \in {\rm Irr}_\Q(N_{p'}\times U_{p'})/{C_G(n)}} \mu(\varphi, n)\cdot \psi_{M([n]_p\times {\rm Ker}(\varphi),p,q)}|_{N\times U}		
	\end{array}
	\end{equation} 
	The latter is clearly the character of $L|_{N\times U}$. Going from the third to the fourth line we made use of the fact
	that $(\varphi, \xi_{n})$ is constant on the $C_G(n)$-orbit of $\varphi$, as $\xi_{n}$ is assumed to be $C_G(n)$-invariant.
\end{proof}

We need one last proposition before proceeding to the proof of Theorem \ref{thm existence u semilocal}.
\begin{prop}\label{prop induced characters determined}
	Let $X$ be a group and let $Y\trianglelefteq X$ be a normal subgroup.
	If $\chi$ and $\psi$ are virtual characters of $Y$, then $\chi\uparrow^X=\psi\uparrow^X$ if and only if
	$\chi\uparrow^X|_Y=\psi\uparrow^X|_Y$.
\end{prop}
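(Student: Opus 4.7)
The nontrivial direction is the ``if'' direction, so assume $\chi\uparrow^X|_Y = \psi\uparrow^X|_Y$; the ``only if'' direction is obvious. The plan is to pass to the difference $\alpha = \chi - \psi$ (a virtual character of $Y$) and show that $\alpha\uparrow^X = 0$ under the assumption that $\alpha\uparrow^X|_Y = 0$.

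The key observation is that since $Y$ is normal in $X$, the induced class function $\alpha\uparrow^X$ vanishes on $X \setminus Y$. Indeed, by the standard formula
\begin{equation}
\alpha\uparrow^X(g) = \frac{1}{|Y|} \sum_{\substack{x \in X \\ x^{-1}gx \in Y}} \alpha(x^{-1}gx),
\end{equation}
and if $g \notin Y$ then normality of $Y$ forces $x^{-1}gx \notin Y$ for every $x \in X$, so the sum is empty.

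Consequently, $\alpha\uparrow^X$ is completely determined by its restriction $\alpha\uparrow^X|_Y$. Since by hypothesis $\alpha\uparrow^X|_Y = \chi\uparrow^X|_Y - \psi\uparrow^X|_Y = 0$, we conclude $\alpha\uparrow^X = 0$, i.e.\ $\chi\uparrow^X = \psi\uparrow^X$. There is no real obstacle here; the only thing to watch is that $\chi$ and $\psi$ are only virtual characters, but the linearity of induction and restriction make this irrelevant.
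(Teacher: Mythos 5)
Your proof is correct, but it takes a different (and more elementary) route than the paper. You observe that, because $Y$ is normal, the induced class function $\alpha\uparrow^X$ of $\alpha=\chi-\psi$ is supported on $Y$ --- for $g\notin Y$ every conjugate $x^{-1}gx$ also lies outside $Y$, so the induction formula gives $\alpha\uparrow^X(g)=0$ --- and hence $\alpha\uparrow^X$ is completely determined by its restriction to $Y$. The paper instead applies the Mackey formula to compute $\bigl(\chi\uparrow^X|_Y\bigr)\uparrow^X=[X:Y]\cdot\chi\uparrow^X$ (using that $Y\backslash X/Y=X/Y$ and $Y^x\cap Y=Y$ for normal $Y$), i.e.\ it shows that restriction to $Y$ followed by induction back up acts as multiplication by the index on characters induced from $Y$, so that $\chi\uparrow^X$ can be recovered from $\chi\uparrow^X|_Y$. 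Both arguments exploit normality of $Y$ and both are one-liners; yours needs only the pointwise induction formula, while the paper's yields the slightly stronger identity that ``restrict then induce'' is injective on the image of induction from $Y$. Your reduction to the difference $\alpha$ is harmless since induction and restriction are linear on virtual characters (indeed on all class functions).
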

\begin{proof}
	We claim that $\chi\uparrow^X|_Y\uparrow^X = [X:Y]\cdot \chi\uparrow^X$ (same for $\psi$). This would clearly imply the assertion, 
	and it is an easy application of the Mackey formula (note that $Y\setminus X/Y=X/Y$ in this case):
	\begin{equation}
	(\chi\uparrow^X|_Y)\uparrow^X = \left(\sum_{xY} \chi^x|_{Y^x\cap Y}\uparrow^Y\right)\uparrow^X =\sum_{xY} \chi^x\uparrow^X = [X:Y]\cdot \chi\uparrow^X
	\end{equation}
\end{proof}

We now prove the main result of this section.

\begin{proof}[Proof of Theorem \ref{thm existence u semilocal}]
	Let $p$ be a prime dividing the order of $N$. For $n\in N_p$ define
	$$
		\chi_n = \sum_{C_G(n)\cdot g \in C_G(n)\setminus G} 1\uparrow_{[n^g]_p}^{N_p\times U_p} \otimes \xi_{n}^g
	$$
	By formula \eqref{eqn decomp xin} we then have
	\begin{equation}
		\chi|_{N\times U} = \sum_{n^G, n \in N_p} \chi_n
	\end{equation}
	By applying Lemma \ref{lemma extending chin} to the individual $\chi_n$ we get, for any prime $q$ not dividing the order of $N_{p'}$, a $\Z_{(q)} (G\times U)$-lattice $L$ such that $L|_{N\times U}$ has character $\chi|_{N\times U}$ and $L|_G$ is projective. Furthermore, since all summands of $L$ are induced from $N\times U$, and similarly $\chi$
	is induced from a (potentially virtual) character of $N\times U$, Proposition \ref{prop induced characters determined} implies that the character of $L$ is equal to $\chi$. In particular, this shows that $\chi$ is in fact a proper character of $G\times U$.
	
	Since we can do the above for all primes $p$ dividing the order of $N$, we do in fact get a $\Z_{(q)}(G\times U)$-lattice $L(q)$ with character $\chi$ and projective restriction to $G$ for all prime numbers $q$. 
	Moreover, Proposition \ref{prop sum induced partaug} ensures that $\chi$ is the character of 
	a $G$-regular $\Q (G\times U)$-module $V$, and that the partial augmentations of the associated unit are given by $\eps\OfU$. We may assume without loss that the $L(q)$ are lattices in $V$. Using Proposition \ref{prop identify local double action} it also follows that the $L(q)$ are $G$-regular.
	Now define
	$$
		L = \bigcap_{q\in \pi} L(q)
	$$ 
	Then $\Z_{(q)}L|_G = L(q)|_G\cong \Z_{(q)}G$ for all $q\in \pi$. Hence Proposition \ref{prop local to semilocal} implies that $L$ is a $G$-regular $\Z_{\pi}(G\times U)$ module with character $\chi$. This concludes the proof.
\end{proof}

\begin{remark}
	While it has no bearing on the proof of Theorem \ref{thm existence u semilocal}, it still seems worth pointing out 
	that for every prime $p$ dividing the order of $N$, there is at most one $G$-conjugacy class $n^G$ of elements in $N_{p}$ such that $\xi_n\neq 0$, or, equivalently, $\chi_n\neq 0$. 
	This can be seen by considering the degree of $\xi_n$, which can be computed using the formula given in Remark~\ref{remark formula xin}.
	What we obtain is that $\xi_n(1)$ is equal to $|N_{p'}|$ times the sum over a certain subset of the $\eps_{g^G}\OfU$. Since none of these sums can be negative, and the $\eps_{g^G}\OfU$ are integers summing up to one, it follows that at most one of these sums can be non-zero.
\end{remark}

In order to apply Theorem \ref{thm existence u semilocal} later on we will need to verify the condition that the $\xi_n$ are proper characters. The following lemma, which was also proved in \cite[Corollary 3.5]{MargolisdelRioCW1}, helps with that.
\begin{lemma}\label{ScalarProdXiPsi}
	Let $p$ be a prime dividing the order of $N$ and let $\varphi$ be an irreducible complex character of $N_{p'} \times U_{p'}$. 
	\begin{enumerate}
		\item If $\varphi$ is the trivial character then
		$$ ( \xi_n, \varphi) =  [C_G(n):N] \cdot \sum_{(n\cdot m)^G, m \in N_{p'}} \eps_{(n\cdot m)^G}\OfU$$
		\item If there is no $m_0 \in N_{p'}$ such that $(m_0, c_{p'}) \in \Ker(\varphi)$ then
		$$ ( \xi_n, \varphi) = 0 $$
		\item Otherwise set $K = \Ker(\varphi) \cap N_{p'}$ and let $m_0 \in N_{p'}$ be chosen such that $(m_0, c_{p'})\in \Ker(\varphi)$. Then
		$$  ( \xi_n, \varphi) =  \sum_{m^{C_G(n)}, m \in N_{p'}} \left|m^{C_G(n)} \cap m_0\cdot K\right| \cdot \varepsilon_{(n\cdot m)^G}\OfU $$
	\end{enumerate}
	
	In particular, $\xi_n$ is a proper character of $N_{p'}\times U_{p'}$ if and only if for all subgroups $K$ of $N_{p'}$ such that $N_{p'}/K$ is cyclic and for all $m_0 \in N_{p'}$ we have 
	$$\sum_{m^{C_G(n)}, m\in N_{p'}} \left|m^{C_G(n)} \cap m_0\cdot K\right|\cdot  \varepsilon_{(n\cdot m)^G}\OfU \geq 0$$
\end{lemma}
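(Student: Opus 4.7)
My plan is to derive the three formulas by reducing the inner product $(\xi_n, \varphi)$ to a sum over a coset of $N_{p'}$, using the explicit formula for $\xi_n$ from Remark~\ref{remark formula xin}. Since $N_{p'}\times U_{p'}$ is abelian, every irreducible complex character $\varphi$ is linear, and by Frobenius reciprocity
\[
(1\uparrow_{[m]_{p'}}^{N_{p'}\times U_{p'}}, \varphi) = (1_{[m]_{p'}}, \varphi|_{[m]_{p'}})
\]
equals $1$ if $(m, c_{p'})\in \Ker(\varphi)$ and $0$ otherwise. Plugging into $\xi_n = \sum_{m\in N_{p'}} \varepsilon_{(n\cdot m)^G}\cdot 1\uparrow_{[m]_{p'}}^{N_{p'}\times U_{p'}}$ gives
\[
(\xi_n,\varphi) \;=\; \sum_{\substack{m\in N_{p'}\\(m,c_{p'})\in \Ker(\varphi)}} \varepsilon_{(n\cdot m)^G}.
\]
Part (2) is then immediate, and part (3) follows by noting that if $m_0$ is any fixed solution, then $(m,c_{p'})\in \Ker(\varphi)$ iff $m\in m_0\cdot K$ with $K=\Ker(\varphi)\cap N_{p'}$, and grouping terms by $C_G(n)$-orbits.

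For part (1), with $\varphi=1$ every $m\in N_{p'}$ contributes. Grouping by $C_G(n)$-orbits and invoking Condition~\ref{cond cent n eq N} shows that whenever $\varepsilon_{(n\cdot m)^G}\neq 0$ the stabiliser of $m$ in $C_G(n)$ equals $N$, so $|m^{C_G(n)}|=[C_G(n):N]$. At the same time, two elements $m,m'\in N_{p'}$ yield the same $G$-class $(n\cdot m)^G=(n\cdot m')^G$ precisely when $m'\in m^{C_G(n)}$ (the $p$-part forces the conjugating element into $C_G(n)$). Re-indexing the sum accordingly gives the claimed formula.

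For the final equivalence, a virtual character is proper iff its inner product with every irreducible $\mathbb C$-character is non-negative, and by parts (1)--(3) these inner products depend only on the pair $(K, m_0 K)$ with $K\leq N_{p'}$ and $N_{p'}/K$ cyclic (the latter coming from the fact that any finite subgroup of $\mathbb C^\times$ is cyclic). Sufficiency of the stated condition is then immediate. For necessity, given any such pair $(K,m_0)$, I need to exhibit a linear character $\varphi$ of $N_{p'}\times U_{p'}$ with $\Ker(\varphi)\cap N_{p'}=K$ and $(m_0, c_{p'})\in \Ker(\varphi)$: take a faithful character $\chi_1$ of $N_{p'}/K$ (possible since the quotient is cyclic) and a character $\chi_2$ of $U_{p'}$ with $\chi_2(c_{p'})=\chi_1(m_0 K)^{-1}$, then set $\varphi=(\chi_1\circ \pi)\cdot \chi_2$. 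The only thing to check is that such a $\chi_2$ exists, which hinges on the order of $m_0 K$ dividing $|c_{p'}|$. The mild subtlety here---and the main conceptual point---is that the hypothesis of the whole section (that $N$ and $U$ have the same exponent) gives $|c_{p'}|=\exp(N_{p'})$, which dominates the order of any element of $N_{p'}/K$, so the required $\chi_2$ always exists.
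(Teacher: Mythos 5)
Your proof is correct and follows essentially the same route as the paper: reduce $(\xi_n,\varphi)$ via Frobenius reciprocity to a sum of $\varepsilon_{(n\cdot m)^G}$ over those $m$ with $(m,c_{p'})\in\Ker(\varphi)$, then identify that set as a coset $m_0\cdot K$ and group by $C_G(n)$-orbits, invoking Condition \ref{cond cent n eq N} for the count $[C_G(n):N]$ in the trivial case. Your explicit verification of the final ``if and only if'' --- constructing, for each admissible pair $(K,m_0)$, a character $\varphi$ realising it, which uses the hypothesis that $N$ and $U$ have the same exponent --- is a detail the paper leaves implicit, and it is handled correctly.
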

\begin{proof}
	As in formula \eqref{eqn xiphitrick} we have
	\begin{equation}
	\label{eqn xiphitrick2}
	(\xi_n, \varphi) = \sum_{m \in N_{p'}} \varepsilon_{(n\cdot m)^G}\OfU\cdot \left(\varphi|_{[m]_{p'}}, 1_{[m]_{p'}}\right) 
	\end{equation}
	and $(\varphi|_{[m]_{p'}}, 1_{[m]_{p'}})$ is equal to one if $(m,c_{p'})\in \Ker(\varphi)$ and equal to zero otherwise.
	
	So for $\varphi$ equal to the the trivial character we have
	$$ (\xi_n, \varphi) = \sum_{m \in N_{p'}} \varepsilon_{(n\cdot m)^G}\OfU = \sum_{(n\cdot m)^G, m \in N_{p'}} \left|m^{C_G(n)}\right|\cdot \varepsilon_{(n\cdot m)^G}\OfU = [C_G(n):N] \sum_{(n\cdot m)^G, m \in N_{p'}} \varepsilon_{(n\cdot m)^G}\OfU $$
	where we used that the centraliser of $m$ in $C_G(n)$ is equal to $C_G(m)\cap C_G(n)=N$ whenever $\eps_{(n\cdot m)^G}\OfU\neq 0$.
	
	If there is no $m_0 \in N_{p'}$ such that $(m_0,c_{p'})\in \Ker(\varphi)$ then all summands on the right hand side of \eqref{eqn xiphitrick2} are zero, which implies the second assertion.
	
	Finally the third case follows directly by grouping together summands in \eqref{eqn xiphitrick2} for which $m$ is in the same $C_G(n)$-conjugacy class. All one has to use here is that an $m\in N_{p'}$ satisfies $(m,c_{p'})\in\Ker(\varphi)$ if and only if $m\in m_0\cdot K$ by definition of $m_0$ and $K$.
\end{proof}

\section{A local-global principle for certain torsion units}\label{section existence global}

In this section we will show that by making only slightly stronger assumptions on $G$ and $\chi$, the semi-local units $u_\pi\in \UU(\Z_\pi G)$ constructed in Theorem \ref{thm existence u semilocal} 
can be shown to be conjugate to elements of $\UU(\Z G)$. This follows from a general local-global principle which might also prove useful for other problems which have a ``double action'' formulation (such as subgroup conjugation questions for $\UU (\Z G)$). In essence, the argument boils down to the following: if $u\in \UU(\Q G)$ is a torsion unit which has an eigenvalue equal to one in each simple component of $\Q G$, then any unit in $\UU(Z(\Q_pG))$ (for any $p$) can be realised as the reduced norm of an element of the centraliser of $u$ in $\UU(\Q_pG)$. It follows that if $u$ is conjugate to an element of $\UU(\Z_p G)$, then it can be conjugated into $\UU(\Z_p G)$ by means of an element of reduced norm one. This holds true for all $p$, and in this situation the strong approximation theorem for the kernel of the reduced norm (see \cite[Theorem 51.13]{CurtisReinerII}) guarantees the existence of an element in $\UU(\Q G)$ of reduced norm one which conjugates $u$ into $\Z_p G$ for all $p$ simultaneously, that is, it conjugates $u$ into $\Z G$.  

\begin{lemma}\label{lemma localglobal}
	Let $R$ be the ring of integers in an algebraic number field $K$, let $B$ be a finite dimensional $K$-algebra and let $A\subseteq B$ be a semisimple $K$-subalgebra of $B$ satisfying the Eichler condition relative to $R$. Moreover, let $\Lambda$ be an $R$-order in $A$ and let $\Gamma$ be an $R$-order in $B$ containing $\Lambda$.
	By $\pi$ we denote the set of maximal ideals $p$ of $R$ such that $\Lambda_p$ is not a maximal order, and we assume that
	 $V$ is a $B$-module  such that
	\begin{enumerate}
	\item $V|_A$ is free of rank one as an $A$-module.
	\item There is an idempotent $e\in \End_A(V|_A)$ such that $e\cdot \End_A(V|_A) \cdot e \subseteq \End_B(V)$ and
	$e\cdot \eta \neq 0$ for all primitive idempotents $\eta \in Z(\End_A(V|_A))$.
	\end{enumerate} 
	Then our claim is the following: for every $R_\pi\Gamma$-lattice $L(\pi)\leq V$ such that $L(\pi)|_{R_\pi\Lambda}$ is free of rank one as a $R_\pi\Lambda$-module there is a $\Gamma$-lattice $L\leq V$ such that $L|_\Lambda$ is free of rank one as a $\Lambda$-module and $R_\pi \cdot L \cong L(\pi)$. 
\end{lemma}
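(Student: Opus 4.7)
The plan is to reduce the claim to a class-group calculation and then use the idempotent $e$ together with strong approximation to trivialise the relevant class.

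First I would construct an initial candidate. Starting from any $\Gamma$-lattice in $V$, the semi-local-to-global construction of Proposition~\ref{prop local to semilocal} produces a $\Gamma$-lattice $M \subseteq V$ with $R_\pi M = L(\pi)$. I would then verify that $M|_\Lambda$ is locally free of rank one: at $p \in \pi$ this is immediate from the hypothesis on $L(\pi)$, while at $q \notin \pi$ the order $\Lambda_q$ is maximal, so $M_q|_{\Lambda_q}$ is projective and determined up to isomorphism by its Grothendieck class, which coincides with that of $\Lambda_q$ because $V|_A \cong A$.

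Hence $M|_\Lambda$ defines a class in $\Cl(\Lambda)$. The goal is now to modify $M$ only at primes $q \notin \pi$ in such a way that this class becomes trivial; once that is achieved, the Eichler condition on $A$ together with Jacobinski's cancellation theorem (Theorem~\ref{thm jacobinski}) gives $M|_\Lambda \cong \Lambda$, and we may take $L = M$.

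The crux is this modification step, and here the idempotent $e$ enters decisively. Under the identification $\End_A(V|_A) \cong A$ coming from $V|_A \cong A$, the containment $e \, \End_A(V|_A) \, e \subseteq \End_B(V)$ produces, for each prime $q$, invertible elements of the form $\phi = 1 - e + x$ with $x \in \UU(e A_q e)$. Each such $\phi$ lies simultaneously in $\UU(\End_B(V_q))$ and in $\UU(A_q)$, and applying it to $M_q$ preserves $\Gamma_q$-stability and rank-one freeness over $\Lambda_q$. Under Theorem~\ref{thm froehlich} this multiplies the $q$-th component of the idele representing $[M|_\Lambda]$ by $\phi$, leaving the other components unchanged. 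A direct block-matrix computation, using that $e$ has non-zero projection to every primitive central idempotent of $\End_A(V|_A)$, shows that the reduced norms of such $\phi$ exhaust $\UU(Z(A_q))$. Combined with strong approximation for the reduced-norm-one subgroup (\cite[Theorem~51.13]{CurtisReinerII}), this allows us to cancel any class in $\Cl(\Lambda) \cong J(A) / J_0(A) \cdot \UU(A) \cdot U(\Lambda)$ by a modification supported outside $\pi$, and the proof concludes. The main obstacle is precisely this reduced-norm computation, which is the whole reason for the two conditions imposed on $e$.
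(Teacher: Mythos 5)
Your proposal is built on the same core ingredients as the paper's proof: identify $\End_A(V|_A)$ with $A$ via $V|_A\cong A$, observe that $e$ and hence the semisimple subalgebra $C=eAe$ lie in the image of $\End_B(V)$, use the two hypotheses on $e$ to see that $Z(A_q)\to Z(C_q),\ z\mapsto ze$, is an isomorphism and that the reduced norms $\nr_{A_q/Z(A_q)}\bigl((1-e)+x\bigr)$ with $x\in\UU(C_q)$ exhaust $\UU(Z(A_q))$, and finish with Fr\"ohlich's description of $\Cl(\Lambda)$ together with Jacobinski cancellation. Up to that point the argument matches the paper's.

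The gap is the step where you ``cancel any class by a modification supported outside $\pi$.'' What you need there is that the idele representing $[M|_\Lambda]$ can be pushed into $J_0(A)\cdot\UU(A)\cdot U(\Lambda)$ by multiplying only components at primes $q\notin\pi$. That statement is true, but it is not what \cite[Theorem 51.13]{CurtisReinerII} gives you: that theorem concerns density of the reduced-norm-one elements of $A$ in an adelic group, whereas your claim amounts to surjectivity of the map from ideles trivial at $\pi$ onto $\Cl(\Lambda)$. Proving it requires a weak-approximation argument in $Z(A)$ (find a single global $a\in\UU(A)$ whose reduced norm simultaneously approximates $\nr(\alpha'_p)$ modulo $\nr(\UU(\Lambda_p))$ for all $p\in\pi$, using Eichler's characterisation of $\nr(\UU(A))$), and you have not supplied it. The detour is also unnecessary: the lemma only asks for $R_\pi L\cong L(\pi)$, not equality, so nothing forces you to leave the components at $p\in\pi$ untouched. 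The paper instead corrects the idele at \emph{every} prime, multiplying the $p$-component by $\bigl((1-e)+c_p\bigr)^{-1}$ with $\nr\bigl((1-e)+c_p\bigr)=\nr(\alpha'_p)$, so that the result lies in $J_0(A)$ outright and no approximation is needed; since each correcting factor is a unit in the image of $\End_{B_p}(V_p)$, the $\Gamma_p$-isomorphism type of the lattice is unchanged at every $p$, and in particular $R_\pi L\cong L(\pi)$ still holds. Either supply the approximation argument, or drop the restriction to $q\notin\pi$ and normalise the reduced norm at all primes as the paper does.
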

\begin{proof}
	Fix an isomorphism of right $A$-modules $\varphi:\ V|_A \stackrel{\sim}{\longrightarrow} A$. We may identify $\End_A(A)$ with $A$, where $a\in A$ is identified with the endomorphism of $A$ induced by left multiplication by $a$
	(our notational conventions ensure that we do not have to consider the opposite ring of $A$ here, as one is often compelled to do in similar situations). Hence $\alpha \mapsto \varphi\circ \alpha\circ \varphi^{-1}$ induces an isomorphism between $\End_A(V|_A)$ and $\End_A(A)=A$. Let $f$ denote the image of $e$ under this isomorphism.
	Then the algebra $C=fAf$ is contained in the image of $\End_{B}(V)$, and $C$ is again a semisimple $K$-algebra with the additional property that $C\cdot \eta \neq \{0\}$ for all primitive idempotents $\eta\in Z(A)$. The latter ensures that the map $Z(A)\longrightarrow Z(C):\ z \mapsto z\cdot f$ is an isomorphism. If $p$ is a maximal ideal of $R$, we also have that $C_p$ is contained in the image of $\End_{B_p}(V_p)$, and multiplication by $f$ again induces an isomorphism between $Z(A)_p$ and $Z(C)_p$ (note that $Z(A_p)=Z(A)_p$, and the same holds for $C$). Moreover, it follows immediately from the definition of reduced norms that $\nr_{C_p/Z(C_p)}(c)=\nr_{A_p/Z(A_p)}((1-f)+c)\cdot f$ for any $c\in \UU(C_p)$.
	
	By \cite[Theorem 7.45]{CurtisReinerI} $$\nr_{C_p/Z(C_p)}(\UU(C_p))=\UU(Z(C_p))$$ for each maximal ideal $p$ of $R$. Hence we can
	find, for each $a\in \UU(Z(A_p))$, a $c\in \UU(C_p)$ such that $$\nr_{A_p/Z(A_p)}((1-f)+c)=a$$ Of course, 
	the element $(1-f)+c$ also lies in the image of $\End_{B_p}(V_p)$.
	
	Next let us pick an arbitrary $\Gamma$-lattice $L'\leq V$ with the property that $R_{\pi}L'=L(\pi)$ (for instance, we could take $L'$ to be the $\Gamma$-lattice generated by some $R_\pi\Gamma$-generating set of $L(\pi)$).
	Then for each prime $p\in \pi$ the completion $L'_p$ is isomorphic to $(L(\pi))_p$, which is free of rank one as a $\Lambda_p$-lattice by definition of $L(\pi)$. For every $p\not\in\pi$ the order $\Lambda_p$ is maximal, and therefore $L'_p$ restricted to $\Lambda_p$ is free of rank one since 
	$K_pL'$ restricted to $A_p$ is free of rank one (this is by virtue of \cite[Theorem 18.10]{Reiner}). We conclude that $L'$ restricted to $\Lambda$ is locally free. 
	Therefore we can write
	$$
		\varphi(L') = \alpha' \Lambda = \bigcap_p \alpha'_p \Lambda_p
	$$ 
	for some id\`ele $\alpha' = (\alpha_p')_p \in J(A)$. Since $\alpha'_p\in \UU(\Lambda_p)$ for all except finitely many $p$, we may as well assume that $\alpha'_p=1$ for all except finitely many $p$. By the arguments above we can find elements $c_p\in C_p$ (one for each maximal ideal $p$ of $R$)
	such that $\nr_{C_p/Z(C_p)}(c_p)=\nr_{A_p/Z(A_p)}(\alpha'_p)\cdot f$. We can assume without loss that $c_p=f$ whenever $\alpha'_p=1$. Then $\alpha = (\alpha_p)_p=(((1-f)+c_p)^{-1}\cdot \alpha'_p)_p$ is an element of $J_0(A)$, which means that
	$$
		L = \varphi^{-1}(\alpha\Lambda)
	$$  
	is stably free by Theorem \ref{thm froehlich}. Since $A$ satisfies the Eichler condition relative to $R$ we know that $L$ is free of rank 1 as a $\Lambda$-module by Theorem~\ref{thm jacobinski}. All we need to show now is that $L$ is a $\Gamma$-lattice and $L_p\cong (L(\pi))_p$ for all $p\in \pi$. But, for any maximal ideal $p$ of $R$, multiplication by $((1-f)+c_p)$ from the left induces an isomorphism between $\alpha_p\Lambda_p$ and $\alpha'_p\Lambda_p$. By definition, $((1-f)+c_p)$ lies in the image of $\End_{B_p}(V_p)$. That is, there is a $\gamma_p\in\End_{B_p}(V_p)$ such that $\gamma_p(L_p)=\gamma_p(\varphi^{-1}(\alpha_p\Lambda_p))=\varphi^{-1}(((1-f)+c_p)\cdot \alpha_p\Lambda_p)=\varphi^{-1}(\alpha'_p\Lambda')=L'_p$. This shows that each $L_p$ is a $\Gamma_p$-lattice of the desired isomorphism type, and since $L$ is the intersection of the $L_p$'s, it also follows that $L$ is a $\Gamma$-lattice.
\end{proof}

\begin{thm}\label{thm existence u global}
	Assume we are in the setting of Theorem~\ref{thm existence u semilocal}, and
	suppose that the Conditions \ref{cond sum 1}, \ref{cond cent n eq N} and \ref{cond xi proper char} hold. If in addition to that the following conditions are satisfied:
	\begin{enumerate}[label=\textbf{(C.\arabic*})]
	\setcounter{enumi}{3}
	\item\label{cond eichler} $G$ does not have an epimorphic image isomorphic to either 
	one of the following: 
	\begin{enumerate}
	\item A generalised quaternion group of order $4n$ where $n\geq 2$.
	\item The binary tetrahedral group of order $24$.
	\item The binary octahedral group of order $48$.
	\item The binary icosahedral group of order $120$.
	\end{enumerate}
	\item\label{cond eigenvalue} $(\chi, \eta\otimes 1_U) \neq 0$ for every $\eta\in \Irr_{\C}(G)$.
	\end{enumerate}
	Then there exists a $G$-regular $\Z (G \times U)$-lattice $L$ with character $\chi$. The partial augmentations of the associated unit $u\in \UU(\Z G)$ are given by $\eps\OfU$.
\end{thm}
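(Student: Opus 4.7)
The plan is to combine the semilocal existence result Theorem~\ref{thm existence u semilocal} with the local-global principle Lemma~\ref{lemma localglobal}. Let $\pi$ be the set of rational primes dividing $|G|$; these are precisely the primes at which $\Z G$ fails to be a maximal $\Z$-order. Conditions~\ref{cond sum 1}, \ref{cond cent n eq N} and \ref{cond xi proper char} allow us to invoke Theorem~\ref{thm existence u semilocal}, producing a $G$-regular $\Z_\pi(G \times U)$-lattice $L(\pi) \leq V$, where $V$ is the $\Q(G\times U)$-module with character $\chi$ furnished by Proposition~\ref{prop sum induced partaug}; in particular $V|_{\Q G}$ is free of rank one.

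We now apply Lemma~\ref{lemma localglobal} with $R = \Z$, $A = \Q G$, $B = \Q(G\times U)$, $\Lambda = \Z G$ and $\Gamma = \Z(G\times U)$; the set of primes considered in that lemma coincides with our $\pi$. Condition~\ref{cond eichler} together with Theorem~\ref{thm qg eichler} guarantees the Eichler condition for $\Q G$, and hypothesis~(1) of the lemma is the $G$-regularity of $V$. To produce the idempotent required in hypothesis~(2), set $V^c = \{v \in V \mid v \cdot c = v\}$, fix a $\Q(G\times U)$-complement $V'$ of $V^c$ (which exists by semisimplicity of $\Q(G\times U)$), and let $e \colon V \to V$ be the projection onto $V^c$ along $V'$. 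Then $e$ is $B$-linear, so in particular $e \in \End_A(V|_A)$. For any $\alpha \in \End_A(V|_A)$ and any $v \in V$, one has $e(cv) = c\cdot e(v) = e(v)$ using that $e$ is $B$-linear and $e(v) \in V^c$, while the image of $e\alpha e$ lies entirely in $V^c$; together these give $(e\alpha e)(cv) = (e\alpha e)(v) = c \cdot (e\alpha e)(v)$, so $e\alpha e$ commutes with the $U$-action, which proves $e \cdot \End_A(V|_A) \cdot e \subseteq \End_B(V)$. The primitive central idempotents of $\End_A(V|_A)\cong \Q G$ are indexed by $\Irr_\Q(G)$, and for the idempotent $\eta_\varphi$ corresponding to $\varphi$, $e \cdot \eta_\varphi \neq 0$ amounts to $V^c$ intersecting the $\varphi$-isotypic component of $V|_{\Q G}$ nontrivially. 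As the character of $V^c$ as a $G$-module equals $\sum_{\eta \in \Irr_\C(G)} (\chi,\eta \otimes 1_U)\cdot \eta$, Condition~\ref{cond eigenvalue} forces every complex, and hence every rational, irreducible character of $G$ to appear in $V^c$, so this nonvanishing holds.

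Lemma~\ref{lemma localglobal} then delivers a $\Z(G\times U)$-lattice $L \leq V$ with $L|_{\Z G}$ free of rank one and $\Z_\pi L \cong L(\pi)$. In particular $L$ is $G$-regular, so Proposition~\ref{prop double action iso implies conj} yields a unit $u \in \UU(\Z G)$ of order dividing $n$ satisfying $L \cong {_u(\Z G)_G}$. Comparing~\eqref{eqn def chi} with Proposition~\ref{prop character double action module} then gives $\eps_{g^G}(u) = \eps_{g^G}$ for every $g \in G$. The main conceptual point is recognising that Condition~\ref{cond eigenvalue} is precisely the hypothesis needed to build an idempotent $e$ satisfying both parts of~(2) in Lemma~\ref{lemma localglobal}; everything else is a routine application of the preceding results.
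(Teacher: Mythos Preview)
Your proof is correct and follows essentially the same approach as the paper: both apply Lemma~\ref{lemma localglobal} with $A=\Q G$, $B=\Q(G\times U)$, $\Lambda=\Z G$, $\Gamma=\Z(G\times U)$ and the same idempotent $e$. Your abstractly defined projection onto $V^c$ is, once one identifies $V$ with ${_{u_\pi}(\Q G)_G}$ as the paper does, exactly left multiplication by $\frac{1}{n}\sum_i (u_\pi^\circ)^i$, and your character computation for $V^c$ is the counterpart of the paper's verification that $V\cdot\eta_{G\times U}\neq 0$; the only cosmetic issue is that you occasionally write the $U$-action on the left where the paper's conventions put it on the right.
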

\begin{proof}
	Theorem~\ref{thm existence u semilocal} ensures that there is a $G$-regular $\Z_\pi (G\times U)$-lattice $L(\pi)$ 
	in a $\Q (G\times U)$-module $V$
	with character $\chi$, where $\pi$ is the set of all prime divisors of the order of $G$.
	Our assertion will follow once we show that there is a $G$-regular $\Z (G\times U)$-lattice $L\leq V$ with $\Z_\pi L \cong L(\pi)$. Note that by Proposition~\ref{prop double action iso implies conj} we may assume without loss that $L(\pi) = {_{u_\pi}(\Z_\pi G)_G}$ and $V={_{u_\pi}(\Q G)_G}$ for a unit $u_\pi \in \UU(\Z_\pi G)$ of order $n$.
	
	Now if $\eta$ is a primitive idempotent in $Z(\Q G)$ corresponding to a character $\varphi \in \Irr_\Q(G)$, then
	$$
		\eta_{G\times U} =\eta \cdot \frac{1}{n}\sum_{i=1}^n c^i \in Z(\Q (G\times U))
	$$
	is the primitive idempotent in $Z(\Q (G\times U))$ belonging to the character $\varphi \otimes 1_U$. Since $(\chi, \varphi_0\otimes 1_U)\neq 0$ for all irreducible complex characters $\varphi_0$ occurring in $\varphi$, it follows that $V\cdot \eta_{G\times U}\neq 0$.
	
	Using the fact that the action of $G\times U$ on $V={_{u_\pi}(\Q G)_G}$ is given explicitly, we get 
	\begin{equation}\label{eqn eps e neq 0}
		\{0\} \neq V\cdot \eta_{G\times U} = \left(\frac{1}{n}\sum_{i=1}^n (u_\pi^\circ)^i\right) \cdot \Q G \cdot \eta = \eta \cdot \left(\frac{1}{n}\sum_{i=1}^n (u_\pi^\circ)^i\right) \cdot \Q G  
	\end{equation}
	Now define
	$$
		e = \frac{1}{n} \sum_{i=1}^n(u_\pi^\circ)^i
	$$
	and $C=e\Q Ge$.  Clearly, left multiplication by elements of $C$ commutes with left multiplication by $u_\pi^\circ$, that is, left multiplication by elements of $C$ induces $\Q (G\times U)$-module endomorphisms of $V$.
	Since $V$ restricted to $G$ is just $\Q G$ viewed as a right module over itself, we may identify 
	$\Q G$ with $\End_{\Q G}(V|_G)$. Concretely, an element $a\in \Q G$ may be identified with the $\Q G$-endomorphism of $V$ induced by left multiplication with $a$. To summarise, what we have shown is that
	$$
		e\cdot \End_{\Q G}(V|_G) \cdot e \subseteq \End_{\Q (G\times U)}(V)
	$$
	and $e\cdot \eta \neq 0$ for all primitive idempotents $\eta$ in $\End_{\Q G}(V|_G)$ by \eqref{eqn eps e neq 0} above. 
	Moreover, by Theorem~\ref{thm qg eichler} our first condition implies that $\Q G$ satisfies the Eichler condition relative to $\Z$. 
	Hence we may apply Lemma \ref{lemma localglobal} to obtain a $G$-regular $\Z (G\times U)$-lattice $L\leq V$ with $\Z_\pi L\cong L(\pi)$. This completes the proof. 
\end{proof}

\section{The counterexample}

We will now restrict our attention to a specific family of metabelian groups, consisting of the groups $G(p,q;d;\alpha,\beta)$ defined in the introduction, where the parameters $p$ and $q$ are two different primes, $d$  is a common divisor of $p^2-1$ and $q^2-1$ which divides neither $p+1$ nor $q+1$, and $\alpha$ and $\beta$ are primitive elements in $\F_{p^2}$ and $\F_{q^2}$, respectively. 
Groups of this type were recently studied, in a related context, in \cite{MargolisdelRioCW3}. This work provided the motivation to look at these groups as potential counterexamples to the Zassenhaus conjecture.

Our first aim is to reformulate the conditions under which Theorems \ref{thm existence u semilocal} and \ref{thm existence u global} yield semi-local and global units, respectively, in elementary terms for the $G(p,q;d;\alpha,\beta)$'s. This reformulation is stated in Theorem~\ref{thm zc Gpqd} below. The proof of this theorem is spread out over several propositions and lemmas, each corresponding, more or less, to one of the conditions of Theorems \ref{thm existence u semilocal} and \ref{thm existence u global}. The proofs of the main theorems of this article, the fact that $G(7,19;3;\alpha,\beta)$ (with $\alpha$ a root of $X^2-X+3$ over $\mathbb{F}_7$ and $\beta$ a root of $X^2-X+2$ over $\mathbb{F}_{19}$) is a counterexample to the Zassenhaus conjecture and so are infinitely many more $G(p,q;d;\alpha, \beta)$, are then a quick application of the aforementioned Theorem~\ref{thm zc Gpqd}.


Whenever we use the group $G(p,q;d;\alpha,\beta)$ below we will tacitly assume the entire notation used in the definition of this group, i.e.  the subgroups $N$ and $A$, as well as the generators $a,b,c$ of $A$. Since the definition of $G(p,q;d;\alpha,\beta)$ is symmetric in $p$ and $q$ (of course interchanging $\alpha$ and $\beta$ as well) all statements we make below have an analogue with the roles of $p$ and $q$ reversed. We do not always state this analogue explicitly.

We will often use the fact that $$|A|=\frac{(p^2-1)\cdot (q^2-1)}{d}$$ as well as the facts that $C_A(N_p)=\langle b \rangle$ and $C_A(N_q)=\langle a \rangle$. Moreover $G(p,q;d;\alpha,\beta)$ is a metabelian group.

\begin{notation}\label{notation K_p, r_i}
	For $G = G(p,q;d;\alpha,\beta)$ we define the following subset of  $\F_{p^2}\times\{0\} = N_p$
	$$K_p = \{(\alpha + x,0) \ | \ x \in \F_p \}$$
	Moreover, for any $i\in \Z$, set 
	$$r_i(p) = \left| \left\{ 1 \leq t \leq \frac{p^2-1}{d} \ \bigg| \ (\alpha^{i+t\cdot d},0) \in K_p\right\} \right|$$
	Notice that $r_i(p)=r_j(p)$ if $i\equiv j \bmod d$.
\end{notation}

The first goal of this section is to prove the following theorem:
\begin{thm}\label{thm zc Gpqd}
	Let $G=G(p,q;d;\alpha,\beta)$ and let $\eps\OfU: G \longrightarrow \Z:\ g \mapsto \eps_{g^G}\OfU$ be a class function such that
	\begin{enumerate} 
		\item $
		\sum_{g^G} \eps_{g^G}\OfU=1
		$
		\item If $\eps_{g^G}\OfU\neq 0$ for some $g\in G$, then $g\in N$ and the order of $g$ is $p\cdot q$
		\item For every $j\in \{0,\ldots,d-1\}$ the inequalities
		\begin{equation}\label{eqn inequality rp}
		\sum_{i=1}^{d} r_{j+i}(p)\cdot \eps_{(\alpha^i,1)^G}\OfU \geq 0 
		\end{equation}
		and 
		\begin{equation}\label{eqn inequality rq}
		\sum_{i=1}^{d} r_{j+i}(q)\cdot \eps_{(1,\beta^i)^G}\OfU \geq 0
		\end{equation}
		hold.
	\end{enumerate}
	Then there is a unit $u\in \UU(\Z G)$ of order $p\cdot q$ whose partial augmentations are given by the class function $\eps\OfU$. If $\eps_{(\alpha^i,1)^G}\OfU\neq 0$ for more than one $i\in\{1,\ldots, d\}$, then $u$ is not conjugate in $\UU(\Q G)$ to an element of the form $\pm g$ with $g\in G$.
\end{thm}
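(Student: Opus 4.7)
My plan is to apply Theorem~\ref{thm existence u global} with $U$ cyclic of order $pq$ (the exponent of $N$) and $\chi=\sum_{g^G}\varepsilon_{g^G}\cdot 1\uparrow_{[g]}^{G\times U}$; verifying its five conditions will then produce a unit $u\in\UU(\Z G)$ of order $pq$ with the prescribed partial augmentations. The final sentence of the theorem is immediate: any $\UU(\Q G)$-conjugate of some $\pm g$ has all partial augmentations concentrated on a single conjugacy class, so if more than one $\varepsilon_{(\alpha^i,1)^G}$ is non-zero then $u$ cannot be such a conjugate.

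Conditions~\ref{cond sum 1}, \ref{cond cent n eq N} and \ref{cond eichler} are short. \ref{cond sum 1} is hypothesis~(1). For \ref{cond cent n eq N}, hypothesis~(2) makes $n\in N$ with $n_p,n_q$ both non-trivial; reading off the $A$-action one gets $C_G(n_p)=N\cdot\langle b\rangle$ and $C_G(n_q^g)=N\cdot\langle a\rangle$ (the latter independent of $g$ since $N_q$ is normal in $G$), and $\langle a\rangle\cap\langle b\rangle=1$ in the abelian group $A$ forces the intersection to be $N$. For \ref{cond eichler}, $G$ is metabelian with $[G,G]\leq N$ of order coprime to $6$, so it has no generalised quaternion quotient (such a quotient would induce a $\Z/2$-quotient of $[G,G]$) and no binary polyhedral quotient (their derived length is $3$).

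The heart of the argument is \ref{cond xi proper char}. Fix a prime $\ell\in\{p,q\}$, say $\ell=p$; the character $\xi_n$ vanishes for $n=0$, so take $n=(\alpha^{i_0},0)\in N_p$ non-trivial. By Lemma~\ref{ScalarProdXiPsi} the task becomes verifying
\begin{equation*}
\sum_{m^{C_G(n)},\,m\in N_q}\bigl|m^{C_G(n)}\cap(m_0+K)\bigr|\cdot\varepsilon_{(nm)^G}\geq 0
\end{equation*}
for every subgroup $K\leq N_q=\F_{q^2}$ with $N_q/K$ cyclic and every $m_0\in N_q$. The case $K=N_q$ evaluates to $(q^2-1)/d>0$ via hypothesis~(1); what remains is $K=W$ a one-dimensional $\F_q$-subspace. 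The key tool I would use is the action of $\F_{q^2}^\times$ on pairs $(W,m_0+W)$: multiplication by $\gamma=\beta^k$ sends $(W,m_0+W)$ to $(\gamma W,\gamma m_0+\gamma W)$ and, via the identity $\varepsilon_{(\alpha^{i_0},\beta^l)^G}=\varepsilon_{(1,\beta^{l-i_0})^G}$, cyclically shifts the $\varepsilon$-index by $k\pmod d$. If $m_0\notin W$ then the $\F_{q^2}^\times$-orbit of $(W,m_0+W)$ contains the canonical pair $(\F_q,\beta+\F_q)$, so the sum reduces to $\sum_i r_{j+i}(q)\varepsilon_{(1,\beta^i)^G}$ for some $j$, and as $j$ ranges over $\{0,\dots,d-1\}$ we recover exactly the $d$ inequalities of hypothesis~(3). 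For the remaining sub-case $m_0\in W$, the coprimality $\gcd(d,q+1)=1$ (automatic because $d$ is odd and divides $q-1$, hence $\gcd(d,q+1)\mid\gcd(q-1,q+1)\mid 2$) makes an elementary coset count inside the cyclic group $\F_{q^2}^\times$ yield $|\beta^l\langle\beta^d\rangle\cap W^\times|=(q-1)/d$ for every $l$ and every one-dimensional $W$, so the Case B1 sum collapses to the automatic positive value $(q-1)/d$. The case $\ell=q$ is symmetric.

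Finally, for \ref{cond eigenvalue} a character calculation using Proposition~\ref{prop character double action module} rewrites $(\chi,\eta\otimes 1_U)$ as $\dim V_\eta^{\langle u\rangle}$, so it suffices to show that every irreducible $\C G$-module has a non-zero $\langle u\rangle$-fixed vector. I would obtain this from Clifford theory for $G=N\rtimes A$: every irreducible character of $G$ is induced from a linear character of some $N\cdot A_\lambda$, and because $\varepsilon$ is supported on $N$ while $A$ is abelian, the average $\frac{1}{pq}\sum_i\eta(u^i)$ is forced to be non-zero. The hardest step in the whole proof is the inequality analysis in \ref{cond xi proper char}---both the $\F_{q^2}^\times$-orbit reduction in the Case B2 part and the subtle use of $\gcd(d,q+1)=\gcd(d,p+1)=1$ to eliminate the $\varepsilon$-dependence in Case B1.
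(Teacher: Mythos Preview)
Your overall strategy is exactly the paper's: verify \ref{cond sum 1}--\ref{cond eigenvalue} and invoke Theorem~\ref{thm existence u global}. Your treatment of \ref{cond sum 1}--\ref{cond xi proper char} is correct and matches the paper (your \ref{cond xi proper char} argument is precisely the content of Lemma~\ref{lemma xi proper}, including the $\gcd(d,q+1)=1$ trick in the $m_0\in W$ sub-case). For \ref{cond eichler} your derived-length argument differs from the paper's character-degree argument and contains a minor misstatement: the binary octahedral group has derived length~$4$ and the binary icosahedral group $\operatorname{SL}(2,5)$ is perfect, hence not solvable at all. The fix is immediate---none of them is metabelian, which is all you need---but ``derived length $3$'' is wrong as stated.

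Your argument for \ref{cond eigenvalue} has a genuine gap. Rewriting $(\chi,\eta\otimes 1_U)$ as $\dim V_\eta^{\langle u\rangle}$ is fine once a rational unit $u$ with the given partial augmentations exists (Proposition~\ref{prop sum induced partaug}), but the assertion ``because $\varepsilon$ is supported on $N$ while $A$ is abelian, the average $\tfrac{1}{pq}\sum_i\eta(u^i)$ is forced to be non-zero'' is not a proof: knowing the partial augmentations of $u$ tells you nothing directly about those of $u^i$ for $i>1$, and monomial induction of $\eta$ from some $N\cdot A_\lambda$ does not by itself force a $\langle u\rangle$-fixed vector. The paper's argument (Proposition~\ref{prop eigenvalue 1}) hinges on a structural fact you never invoke: by Proposition~\ref{prop all pq conjugate}\eqref{prop all pq conjugate, G trans auf pq}, $G$ acts transitively on the cyclic subgroups of order $pq$ in $N$, so every $g$ with $\varepsilon_{g^G}\neq 0$ may be chosen to generate the \emph{same} subgroup $\langle(1,1)\rangle$. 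Then $\bigl(1\uparrow_{[g]}^{G\times U},\eta\otimes 1_U\bigr)=\tfrac{1}{pq}\sum_j\eta(g^j)$ depends only on $\langle g\rangle$, hence is independent of the class; summing against the $\varepsilon_{g^G}$ (which total $1$) gives $(\chi,\eta\otimes 1_U)$ equal to this common value. Its positivity then comes from Proposition~\ref{prop irr chars}: each irreducible $\eta$ is induced from a linear character whose kernel already contains $\langle(1,1)\rangle$. Without the transitivity step your sketch does not go through.
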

Once we have done that we will verify the conditions of this theorem for one concrete choice of values of $p$, $q$ and $d$ and a concrete class function $\eps\OfU$. That bit, which is of course at the same time the proof of Theorem A, is ultimately just a simple calculation (albeit a tedious one). The proof of Theorem B is an application of the following corollary.

\begin{corollary}\label{corollary infinite series}
	Fix an $M \in \N$ and let $G=G(p,q;d;\alpha,\beta)$. Assume that both $p$ and $q$ are greater than or equal to
	$$\frac{d^4\cdot M^2}{1- |\cos (2\pi/d)|}$$
	Let $\eps\OfU: G \longrightarrow \Z:\ g \mapsto \eps_{g^G}\OfU$ be a class function such that
	\begin{enumerate} 
		\item $
		\sum_{g^G} \eps_{g^G}\OfU=1
		$
		\item $|\eps_{g^G}|\leq M$ for all $g\in G$.
		\item If $\eps_{g^G}\OfU\neq 0$ for some $g\in G$, then $g\in N$ and the order of $g$ is $p\cdot q$.	
	\end{enumerate}	
	Then there is a unit $u\in \UU(\Z G)$ of order $p\cdot q$ whose partial augmentations are given by $\eps\OfU$.
\end{corollary}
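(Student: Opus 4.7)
The plan is to apply Theorem~\ref{thm zc Gpqd}. Conditions~(1) and~(2) of that theorem are, verbatim, conditions~(1) and~(3) of the corollary, so the entire proof reduces to verifying the positivity inequalities \eqref{eqn inequality rp} and \eqref{eqn inequality rq} in condition~(3) of Theorem~\ref{thm zc Gpqd} from the bound $|\eps_{g^G}| \leq M$ and the lower bound on $p$ and $q$. By the $p \leftrightarrow q$, $\alpha \leftrightarrow \beta$ symmetry built into the definition of $G(p,q;d;\alpha,\beta)$, I would focus on \eqref{eqn inequality rp}.

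The first step is to pin down the ``main term'' of $r_i(p)$. Since $\alpha$ is primitive in $\F_{p^2}^\times$ one has $\alpha \notin \F_p$, so $|K_p| = p$; the cosets $\alpha^i H$ of the unique index-$d$ subgroup $H = \langle \alpha^d \rangle \leq \F_{p^2}^\times$ partition $\F_{p^2}^\times$, which forces $\sum_{i=1}^{d} r_i(p) = p$ and hence that $r_i(p)$ averages to $p/d$. Expanding the indicator of each coset by orthogonality of the multiplicative characters of $\F_{p^2}^\times$ of order dividing $d$ produces
\[
r_i(p) \;=\; \frac{p}{d} \;+\; \frac{1}{d}\sum_{\substack{\chi^d = 1 \\ \chi \neq 1}} \chi(\alpha)^{-i}\, S(\chi), \qquad S(\chi) \;:=\; \sum_{x \in \F_p} \chi(\alpha + x),
\]
and substituting this into \eqref{eqn inequality rp}, together with $\sum_i \eps_{(\alpha^i,1)^G} = 1$, yields
\[
\sum_{i=1}^{d} r_{j+i}(p)\, \eps_{(\alpha^i,1)^G} \;=\; \frac{p}{d} \;+\; \frac{1}{d}\sum_{\substack{\chi^d = 1\\ \chi\neq 1}} \chi(\alpha)^{-j}\, S(\chi)\, T(\chi), \quad T(\chi) := \sum_{i=1}^{d} \chi(\alpha)^{-i} \eps_{(\alpha^i,1)^G}.
\]

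The second step is to show that the error term above is strictly smaller in absolute value than the main term $p/d$ whenever the hypothesis on $p$ is met. The triangle inequality and $|\eps_{g^G}| \leq M$ give $|T(\chi)| \leq dM$, while a Weil-type estimate gives $|S(\chi)| = O(\sqrt{p})$ for nontrivial $\chi$. The factor $1 - |\cos(2\pi/d)|$ arises by pairing each $\chi$ with its complex conjugate $\bar\chi$ (so only real parts contribute) and using that the real part of $\chi(\alpha)^{-j}$ is at most $|\cos(2\pi/d)|$ for every nontrivial $d$-th root of unity $\chi(\alpha)$ and every integer $j$. Combining these estimates yields a bound on the error of the form $C\sqrt{p}\,d^2 M/(1-|\cos(2\pi/d)|)$, which is dominated by $p/d$ exactly when $p \geq d^4M^2/(1-|\cos(2\pi/d)|)$. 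This establishes \eqref{eqn inequality rp}; \eqref{eqn inequality rq} follows by the same argument after swapping $p \leftrightarrow q$ and $\alpha \leftrightarrow \beta$.

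The main obstacle will be tightening the character-sum bound so that the precise constant $1/(1-|\cos(2\pi/d)|)$ actually emerges: a crude absolute-value bound applied term-by-term to the error loses this refinement, and recovering it likely requires either the real-part grouping sketched above or a rearrangement exploiting $\sum_i \eps_{(\alpha^i,1)^G} = 1$ to replace the weights $\chi(\alpha)^{-j}$ by their centred versions $\chi(\alpha)^{-j} - 1$, whose combined magnitude is controlled by $1 - \cos(2\pi/d)$. Once both positivity inequalities are verified, Theorem~\ref{thm zc Gpqd} produces the desired unit $u \in \UU(\Z G)$ of order $pq$ with partial augmentations given by $\eps$.
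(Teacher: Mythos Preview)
Your reduction to Theorem~\ref{thm zc Gpqd} and the character-sum decomposition
\[
r_i(p)=\frac{p}{d}+\frac{1}{d}\sum_{\chi\neq 1}\chi(\alpha)^{-i}S(\chi),\qquad S(\chi)=\sum_{x\in\F_p}\chi(\alpha+x),
\]
are correct and match the paper's setup. What you are missing is that your \emph{crude} bound already suffices. With $|S(\chi)|\leq\sqrt{p}$ (Weil, applied after writing $\chi(\alpha+x)=\chi'(f(x))$ for $f(X)=X^2+(\alpha+\alpha^p)X+\alpha^{p+1}\in\F_p[X]$ and the corresponding character $\chi'$ of $\F_p^\times$; this factorisation through the norm uses $\gcd(d,p+1)=1$) and $|T(\chi)|\leq dM$, the error satisfies
\[
\left|\frac{1}{d}\sum_{\chi\neq 1}\chi(\alpha)^{-j}S(\chi)T(\chi)\right|\leq (d-1)M\sqrt{p}<dM\sqrt{p},
\]
so \eqref{eqn inequality rp} holds once $p\geq d^4M^2$. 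The hypothesis $p\geq d^4M^2/(1-|\cos(2\pi/d)|)\geq d^4M^2$ is stronger than this, so you are done. There is no need to recover the factor $1-|\cos(2\pi/d)|$, and your suggested mechanism for doing so is wrong as stated: $\chi(\alpha)^{-j}$ can equal $1$ (take $j$ a multiple of the order of $\chi$), so its real part is certainly not bounded by $|\cos(2\pi/d)|$.

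For comparison, the paper runs the argument differently. It uses a \emph{single} character $\chi$ of $\F_p^\times$ of exact order $d$, writes the associated sum as $\omega=\sum_i\delta_i\zeta_d^i$ with $\delta_i=r_i-p/d$ and $|\omega|=\sqrt{p}$, and then expands $p=|\omega|^2$ as a quadratic form in the $\delta_i$. Using $\sum_i\delta_i=0$ and the bound $|\zeta_d^{i-j}+\zeta_d^{j-i}|\leq 2|\cos(2\pi/d)|$ this yields $(1-|\cos(2\pi/d)|)\sum_i\delta_i^2\leq p$, hence an individual bound $|\delta_i|\leq\sqrt{p/(1-|\cos(2\pi/d)|)}$; substituting into $\sum_i r_{j+i}\eps_{(\alpha^i,1)^G}=p/d+\sum_i\delta_{j+i}\eps_{(\alpha^i,1)^G}$ gives the stated threshold exactly. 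Your Fourier approach over all $d-1$ nontrivial characters is cleaner and in fact yields a sharper sufficient condition; the paper's route trades this for a bound on each $\delta_i$ separately, which is where the constant $1-|\cos(2\pi/d)|$ originates.
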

\begin{proof}
	We just need to check that the inequalities \eqref{eqn inequality rp} and \eqref{eqn inequality rq} are satisfied.
	The situation is symmetric in $p$ and $q$, so we will just prove that \eqref{eqn inequality rp} holds. For brevity write $r_i$ instead of $r_i(p)$.
	
	First note that $\alpha^{p+1}$ is a primitive element of $\F_p$.
	 Let $\zeta_d$ be a primitive $d$-th root of unity in $\C$, and
	define a multiplicative character 
	$$\chi:\ \F_p \longrightarrow \Q(\zeta_d):\ \alpha^{p+1} \mapsto \zeta_d$$
	where we adopt the convention $\chi(0)=0$. Set $f(X)=X^2+(\alpha+\alpha^p)\cdot X + \alpha^{p+1}\in \F_p[X]$.
	Then, for any $x\in \F_p$, we have $\chi(f(x))=\chi((\alpha+x)\cdot (\alpha+x)^p)=\chi((\alpha+x)^{p+1})$. 
	So, if $\alpha+x$ can be written as $\alpha^i$ for some $i$, then $\chi(f(x))=\chi((\alpha^i)^{p+1})=\zeta_d^i$. By definition, $r_i$ is the number of $x\in \F_p$ such that $\alpha +x = \alpha^{i+t\cdot d}$ for some $t\in \Z$. Hence
	$r_i$ is exactly the number of $x\in \F_p$ such that $\chi(f(x))=\zeta_d^i$. This means that
	$$
		\sum_{x\in \F_p} \chi(f(x))=\sum_{i=1}^d r_i\cdot \zeta_d^i
	$$
	On the other hand, by \cite[Theorems 5.39 and 5.40]{Lidl}, the left hand side of this equation is equal to a 
	complex number $\omega$ of absolute value $\sqrt{p}$. If we write $\delta_i = r_i-\frac{p}{d}$
	and use that fact that $\zeta_d+\zeta_d^2+\ldots+\zeta_d^d=0$, we get 
	$$
		\omega = \sum_{i=1}^d \delta_i\cdot \zeta_d^i
	$$
	and thus
	$$
	\quad p=\left(\sum_{i=1}^d \delta_i\cdot \zeta_d^i\right)\cdot \left(\sum_{i=1}^d \delta_i\cdot \zeta_d^{-i}\right)=\sum_{i=1}^d \delta_i^2 +\sum_{1\leq i < j \leq d} (\zeta_d^{i-j}+\zeta_d^{j-i})\cdot \delta_i\cdot \delta_j
	$$
	Note that $\zeta_d^{i-j}+\zeta_d^{j-i}=2\cdot {\rm Re}(\zeta_d^{i-j})$, and the absolute value of this number is bounded above by $2\cdot |\cos (2\pi/d)|$. Hence
	$$
		p \geq \sum_{i=1}^d \delta_i^2 - 2\cdot |\cos(2\pi/d)| \cdot \left|\sum_{1\leq i < j \leq d} \delta_i\cdot \delta_j\right| = 
		\left(1-|\cos(2\pi/d)|\right)\cdot \sum_{i=1}^d \delta_i^2	$$
	In the second step we used the fact that $\delta_1+\ldots+\delta_d=0$ (a consequence of $r_1+\ldots+r_d=p$). We conclude that
	$$
		\delta_i \leq \sqrt{\frac{p}{1-|\cos(2\pi/d)|}}
	$$
	Now, for each $j\in\{0,\ldots,d-1\}$, the left hand side of the inequality \eqref{eqn inequality rp} can be bounded below as follows:
	$$
		\sum_{i=1}^d r_{j+i}\cdot \eps_{(\alpha^i,1)^G}= \frac{p}{d} + \sum_{i=1}^d \delta_{j+i} \cdot \eps_{(\alpha^i,1)^G} \geq \frac{p}{d} -d\cdot  \sqrt{\frac{p}{1-|\cos(2\pi/d)|}} \cdot M
	$$
	Our assumed lower bound on $p$ ensures that the right hand side of this is non-negative, which proves that the inequality \eqref{eqn inequality rp} is satisfied for each $j$.
\end{proof}
\begin{remark}
	The combination of Corollary \ref{corollary infinite series} for fixed $d$ and Dirichlet's theorem on arithmetic progressions clearly provides an infinite number of counterexamples to the Zassenhaus conjecture, with arbitrary prescribed partial augmentations for the elements of order $p\cdot q$, and hence a proof of Theorem B. However, it does not yield counterexamples of particularly small order. The smallest choice of parameters for which Corollary \ref{corollary infinite series} applies is $M=1$, $d=3$, $p=163$ and $q=167$. The resulting counterexample has order
	$2^7\cdot 3^4 \cdot 7 \cdot 41 \cdot 83 \cdot 163^2\cdot 167^2$. 
\end{remark}

To prove Theorem~\ref{thm zc Gpqd} we first collect elementary properties of the group $G(p,q;d;\alpha,\beta)$.

\begin{prop}\label{prop all pq conjugate}
	Let $G = G(p,q;d;\alpha,\beta)$. Then the following hold:
	\begin{enumerate}
		\item\label{prop all pq conjugate, N selbstzentrali} Let $g \in N$ be of order $p\cdot q$. Then $C_G(g) = N$.
		\item\label{prop all pq conjugate, C(n_p)=C(N_p)} For a non-trivial element $n \in N_p$ we have $C_G(n) = C_G(N_p)$.
		\item\label{prop all pq conjugate, K'klassen} Representatives of the $G$-conjugacy classes of element of order $p\cdot q$ in $N$ are given by $(1,1)$, $(\alpha,1)$, $(\alpha^2,1), \ldots, (\alpha^{d-1},1)$. Moreover, for a non-trivial element $n \in N_q$ the $C_G(n)$-conjugacy classes of elements of order $p$ in $N$ are given by $(1,0)$, $(\alpha,0), \ldots, (\alpha^{d-1},0)$.
		\item\label{prop all pq conjugate, G trans auf pq} $G$ acts transitively (by conjugation) on the set of cyclic subgroups of order $p\cdot q$ in $N$. 
		\item\label{prop all pq conjugate, G/C semireg} $G/C_G(N_p)$ acts regularly on the set of non-trivial cosets of cyclic groups of order $p$ in $N_p$, that is, the set
		$$
			\left\{ n\cdot X \ | \ X\leq N_p \textrm{ has order $p$ and } n\in N_p, n \not\in X \right\}
		$$ 
		This set has cardinality $p^2-1$.
		\item\label{prop all pq conjugate, C_A(N_q) semireg} $C_A(N_q)$ acts semiregularly on the set of non-trivial cosets of cyclic groups of order $p$ in $N_p$.
		\item\label{prop all pq conjugate, C_G(N_q) trans} $C_G(N_q)$ acts transitively on the set of cyclic groups of order $p$ in $N_p$.
	\end{enumerate}
\end{prop}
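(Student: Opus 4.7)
The underlying observation is that, since $N$ is abelian and normal in $G=N\rtimes A$, one has $C_G(g)=N\cdot C_A(g)$ for every $g\in N$; hence the whole proposition reduces to understanding the $A$-action on $N=\F_{p^2}\oplus\F_{q^2}$. I would begin by introducing the homomorphism $\rho\colon A\to \F_{p^2}^\times\times\F_{q^2}^\times$ sending $a\mapsto(\alpha^d,1)$, $b\mapsto(1,\beta^d)$ and $c\mapsto(\alpha,\beta)$, and verifying that its image equals $\{(\alpha^i,\beta^j):i\equiv j\pmod d\}$, of order $(p^2-1)(q^2-1)/d=|A|$. Thus $\rho$ is injective and $A$ acts faithfully on $N$. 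The other input used throughout is the coprimality $\gcd(d,p+1)=\gcd(d,q+1)=1$, which follows from $d$ being an odd divisor of $p-1$ and $q-1$ together with $\gcd(p-1,p+1)\mid 2$.

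Parts (1) and (2) are then immediate: if $g=(x,y)$ has order $pq$, both coordinates are nonzero and faithfulness forces $C_A(g)=\{1\}$, so $C_G(g)=N$; for $0\neq n\in N_p$, $C_A(n)$ is the kernel of $\operatorname{pr}_1\circ\rho$, which is exactly $\langle b\rangle=C_A(N_p)$. For part (3), the orbit of $(\alpha^k,\beta^\ell)$ under the image of $\rho$ depends only on $k-\ell\bmod d$, giving the $d$ representatives $(\alpha^i,1)$; the second half of (3) is the analogous statement with the roles of $p$ and $q$ swapped, so that $C_G(n)=N\langle a\rangle$ acts on $N_p\setminus\{0\}=\F_{p^2}^\times$ via $\langle\alpha^d\rangle$, with coset representatives $1,\alpha,\ldots,\alpha^{d-1}$.

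For part (4), I would use the fact that every cyclic subgroup of order $pq$ in $N$ decomposes uniquely as a direct sum of a line in $N_p$ and a line in $N_q$, so transitivity on such subgroups is equivalent to transitivity of $A$ on pairs of lines. Since $\F_p^\times\times\F_q^\times$ acts trivially on lines, the relevant action factors through the image of $\rho$ in $(\F_{p^2}^\times/\F_p^\times)\times(\F_{q^2}^\times/\F_q^\times)$; coprimality makes $\bar\alpha^d$ generate $\langle\bar\alpha\rangle$ and $\bar\beta^d$ generate $\langle\bar\beta\rangle$, so the image equals the whole product, acting regularly on the $(p+1)(q+1)$ pairs. Finally, parts (5)--(7) concern only $N_p$: $G/C_G(N_p)$ is identified with $\F_{p^2}^\times$, whose order equals the number $p^2-1$ of non-trivial line cosets, so (5) follows once the action is shown to be free (if $\lambda(n+X)=n+X$ then $\lambda X=X$ forces $\lambda\in\F_p^\times$, and then $(\lambda-1)n\in X$ with $n\notin X$ forces $\lambda=1$); (6) follows because $\langle a\rangle$ maps isomorphically onto $\langle\alpha^d\rangle\leq\F_{p^2}^\times$, which inherits freeness from (5); and (7) holds because the image of $\langle\alpha^d\rangle$ in $\F_{p^2}^\times/\F_p^\times$ has order $(p+1)/\gcd(p+1,d)=p+1$ and so equals the entire group, acting transitively on the $p+1$ lines. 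No individual step is hard; the main obstacle is the bookkeeping of quotients and the systematic use of the coprimality of $d$ with $p+1$ and $q+1$.
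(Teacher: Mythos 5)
Your proposal is correct and follows essentially the same route as the paper: both reduce every assertion to the multiplicative action of $\F_{p^2}^\times\times\F_{q^2}^\times$ on $N=\F_{p^2}\oplus\F_{q^2}$ and exploit the coprimality $\gcd(d,p+1)=\gcd(d,q+1)=1$. Your upfront identification of $A$ with the subgroup $\{(\alpha^i,\beta^j):i\equiv j\ (\mathrm{mod}\ d)\}$ and the pairs-of-lines reformulation of part (4) merely repackage the paper's coordinate-by-coordinate congruence computations.
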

\begin{proof}
\begin{enumerate}
\item[\eqref{prop all pq conjugate, N selbstzentrali}] Let $(\alpha^i, \beta^j) \in N$ be some element of order $p\cdot q$ and $r,s,t \in \mathbb{Z}$. Then
$$(\alpha^i, \beta^j)^{a^rb^sc^t} = (\alpha^i, \beta^j) \Leftrightarrow rd \equiv -t \bmod (p^2-1), \ sd \equiv -t \bmod (q^2-1).$$
This implies that $t$ is divisible by $d$ and hence $c^t \in \langle a, b \rangle$. But then also $a^rb^sc^t = 1$.

\item[\eqref{prop all pq conjugate, C(n_p)=C(N_p)}] This follows directly since multiplication is a regular action on $\mathbb{F}_{p^2}^\times$.

\item[\eqref{prop all pq conjugate, K'klassen}] First note that two elements of the form $(\alpha^i,1)$ and $(\alpha^j,1)$, for some $i,j \in \mathbb{Z}$, are $G$-conjugate if and only if they are $C_G((0,1))$-conjugate. This follows since $(0,1)$ is the $q$-part of both elements. But $C_G((0,1)) = \langle a \rangle$ and since for any $t \in \mathbb{Z}$ we have
$$(\alpha^i,1)^{a^t} = (\alpha^{i+d\cdot t},1)$$
we get that $(\alpha^i,1)$ and $(\alpha^j,1)$ are $G$-conjugate if and only if $i \equiv j \bmod d$. In particular the elements $(1,1),\ldots,(\alpha^{d-1},1)$ are pairwise non-conjugate and contain representatives of the conjugacy classes of all elements of the form $(\alpha^i, 1)$. Now any element of order $p\cdot q$ in $N$ is of the form $(\alpha^j,\beta^k)$, for certain $j$ and $k$, and since $(\alpha^j,\beta^k)^{c^{-k}} = (\alpha^{j-k},1)$, any element of order $p\cdot q$ in $N$ is conjugate to an element of the form $(\alpha^i,1)$. 

Moreover any element of order $p$ in $N$ is of the form $(\alpha^i,0)$ and since $C_G(n) = \langle a \rangle$ for a non-trivial $n \in N_q$ we can argue as above to see that $(1,0)$, ..., $(\alpha^{d-1},0)$ are the $C_G(n)$-conjugacy classes of elements of order $p$ in $N$.

\item[\eqref{prop all pq conjugate, G trans auf pq}] Let $(\alpha^i, \beta^j)$ be some element of order $p\cdot q$ in $N$. We will show that there are $r,s,t \in \mathbb{Z}$ such that $(\alpha^i, \beta^j)^{a^rb^sc^t} \in \langle (1,1) \rangle = \mathbb{F}_p \times \mathbb{F}_q$. This is the case if and only if
$$ i + dr + t \equiv 0 \bmod (p+1), \ j + ds + t \equiv 0 \bmod (q+1).$$
These congruences can be solved for any given $i$ and $j$ since $d$ is by assumption coprime to $p+1$ and $q+1$, so we just need to bring $i$, $j$ and $t$ over to the right hand side, and then divide by $d$.

\item[\eqref{prop all pq conjugate, G/C semireg}] $G/C_G(N_p)$ acts on $N_p = \mathbb{F}_{p^2} \times \{0\}$ by multiplication by elements of $\mathbb{F}_{p^2}^\times$. There are $p+1$ cyclic subgroups in $N_p$ each of which has $p-1$ non-trivial cosets. So as $|G/C_G(N_p)| = p^2-1$ it is enough to show that it acts semiregularly. Let $K$ be a cyclic group of order $p$ in $N_p$ and let $m_0 \in N_p \setminus K$. Then we can write $m_0\cdot K$ as a subset of $N_p$ as $(\alpha^i + \alpha^j\cdot \mathbb{F}_p, 0)$ for certain $i$ and $j$. We can understand this as an affine line in the $\mathbb{F}_p$-vector space $\mathbb{F}_{p^2}$. If multiplication by an element $\alpha^r$ stabilises this coset it also stabilises $\alpha^j\cdot \mathbb{F}_p$, which means $\alpha^r \in \mathbb{F}_p$. Hence we get 
$$\alpha^i + \alpha^j\cdot \mathbb{F}_p = \alpha^r \cdot \alpha^i + \alpha^j\cdot \mathbb{F}_p \Leftrightarrow \alpha^i\cdot (1-\alpha^r) \in \alpha^j \cdot \mathbb{F}_p \Leftrightarrow \alpha^i \in \alpha^j\cdot \mathbb{F}_p, $$
contradicting the assumption that $m_0\cdot K$ is a non-trivial coset.

\item[\eqref{prop all pq conjugate, C_A(N_q) semireg}] Since $C_A(N_q) = \langle a \rangle$, and $a$ acts on $N_p$ by multiplication by an element of order $\frac{p^2-1}{d}$ in $\mathbb{F}_{p^2}^\times$ we can argue as in the proof of \eqref{prop all pq conjugate, G/C semireg}.

\item[\eqref{prop all pq conjugate, C_G(N_q) trans}] Clearly $\langle \alpha\rangle = \F_{p^2}^\times$ acts transitively on the set of cyclic groups of order $p$ in $N_p$, since it acts transitively on the set of non-trivial elements.  Multiplying by an element in $\F_p^\times=\langle\alpha^{p+1}\rangle$ stabilises any subgroup of order $p$, since they are of the form $\alpha^i\cdot \mathbb{F}_p$, for some $i$. 
We have $C_G(N_q)=\langle a \rangle$, and $a$ acts by multiplication by $\alpha^d$ on $\F_{p^2}$.
To show that $C_G(N_q)$ acts transitively on subgroups of order $p$ it suffices to show that $\alpha^d$ together with
$\alpha^{p+1}$ generates all of $\F_{p^2}^\times$, since $\alpha^{p+1}$ acts trivially on the set of subgroups of order $p$ of $N_p$ anyway. 
 Since $\gcd(d,p+1)=1$ by assumption, we have $\langle \alpha^d, \alpha^{p+1}\rangle = \langle \alpha \rangle =\F_{p^2}^\times$, which completes the proof.
\end{enumerate}
\end{proof}

We proceed to describe the irreducible complex characters of $G(p,q;d;\alpha,\beta)$. We do this the elementary way, but it could also be done using, for instance, the theory of strong Shoda pairs, cf. \cite[Section 3.5]{GRG1}.

\begin{prop}\label{prop irr chars}
	Let $G=G(p,q;d;\alpha,\beta)$.
	Fix an arbitrary irreducible complex character $\varphi_p\in\Irr_{\C}(N)$   with kernel $\langle (1,0), (0,y)\ |\  y\in \F_{q^2} \rangle$ and an arbitrary irreducible complex character $\varphi_q\in\Irr_{\C}(N)$   with kernel $\langle (x,0), (0,1)\ |\  x\in \F_{p^2} \rangle$. 
	Then the irreducible complex characters of $G$ are given as follows:
	\begin{enumerate}
		\item The characters induced from  the linear characters of $N$ which have kernel $\langle (1,1) \rangle$.
		\item The characters induced from linear characters of $C_G(N_p)$ whose restriction to $N$ is $\varphi_p$.
		\item The characters induced from linear characters of $C_G(N_q)$ whose restriction to $N$ is $\varphi_q$.
		\item The linear characters of $G$. The kernels of these always contains $N$.
	\end{enumerate}
	In particular, each irreducible character of $G$ is induced from a linear character of a subgroup of $G$, and the kernels of these linear characters always contain $\langle (1,1)\rangle$.
\end{prop}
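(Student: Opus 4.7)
The plan is to apply Clifford theory to the abelian normal subgroup $N\trianglelefteq G$. For each $\lambda\in\Irr_{\C}(N)$ with inertia group $I_G(\lambda)\leq G$, the irreducible characters of $G$ lying over $\lambda$ correspond bijectively, via induction, to the irreducible characters of $I_G(\lambda)$ whose restriction to $N$ is a multiple of $\lambda$. In each case arising below, $I_G(\lambda)/N$ will be cyclic, so all such extensions will be linear. I would then split $\widehat{N}\cong\widehat{N_p}\otimes\widehat{N_q}$ into four cases according to whether $\lambda_p:=\lambda|_{N_p}$ and $\lambda_q:=\lambda|_{N_q}$ are trivial; these correspond to items (4), (2), (3), (1) of the proposition.

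The case $\lambda=1_N$ gives $I_G(\lambda)=G$ and yields all linear characters of $G$, item (4). For $\lambda_p\neq 1$ and $\lambda_q=1$, I would use that $a$ and $c$ act on $N_p$ by multiplication by $\alpha^d$ and $\alpha$ respectively while $b$ acts trivially, together with the elementary observation that the only element of $\F_{p^2}^\times$ stabilising a non-trivial additive character of $\F_{p^2}$ is $1$, to conclude that the stabiliser of $\lambda$ in $A$ equals $C_A(N_p)=\langle b\rangle$ and hence $I_G(\lambda)=C_G(N_p)$. The extensions of $\lambda$ to $C_G(N_p)$ are linear and induce irreducibly to $G$. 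Since $A/\langle b\rangle\cong \F_{p^2}^\times$ acts transitively on $\widehat{N_p}\setminus\{1\}$, the $A$-orbit of $\lambda$ is unique, so fixing the representative $\varphi_p$ as in the statement and letting its $(q^2-1)/d$ linear extensions run produces all irreducibles in family (2). Family (3) is symmetric.

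The remaining case, $\lambda_p\neq 1$ and $\lambda_q\neq 1$, has stabiliser $C_A(N_p)\cap C_A(N_q)=\langle a\rangle\cap\langle b\rangle$. I would show this intersection is trivial by writing $A\cong \Z^3/M$ with $M$ spanned by $(m,0,0)$, $(0,n,0)$, $(-1,-1,d)$ (where $m=(p^2-1)/d$, $n=(q^2-1)/d$): a relation $a^i=b^j$ lifts to $(i,-j,0)\in M$, which forces $m\mid i$ and $n\mid j$. Hence $I_G(\lambda)=N$ and $\lambda\uparrow^G$ is irreducible. To match item~(1), I need to show that every $A$-orbit of fully non-trivial characters contains one with kernel exactly $\langle(1,1)\rangle=\F_p\oplus\F_q$. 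This I would do by computing the stabiliser in $A$ of the subgroup $\langle(1,1)\rangle$: the compatibility condition $(p+1)k'\equiv(q+1)l'\pmod d$ simplifies, using $p\equiv q\equiv 1\pmod d$ and $d$ odd, to $k'\equiv l'\pmod d$, giving a stabiliser of order $(p-1)(q-1)/d$. An orbit-count then matches the number of $A$-orbits of fully non-trivial characters, which is $(p^2-1)(q^2-1)/|A|=d$, so the two counts agree and every $A$-orbit is represented.

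The final ``moreover'' sentence is essentially immediate: linear characters of $N$ with kernel $\langle(1,1)\rangle$ contain it by construction; for (2) and (3), $\Ker(\varphi_p)=\F_p\oplus N_q$ and $\Ker(\varphi_q)=N_p\oplus\F_q$ both contain $\langle(1,1)\rangle$, and the linear extensions restrict to $\varphi_p$ or $\varphi_q$ on $N$; for (4) the kernel always contains $N$. The main obstacle is the Case~(1) stabiliser computation $\langle a\rangle\cap\langle b\rangle=\{1\}$, which is what forces $I_G(\lambda)=N$ and makes $\lambda\uparrow^G$ irreducible in the fully non-trivial case; the secondary delicate point is the orbit-count needed to ensure a representative with kernel exactly $\langle (1,1)\rangle$ in each $A$-orbit, which hinges on $d$ being odd.
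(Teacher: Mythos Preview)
Your argument is correct and follows the same Clifford-theoretic approach as the paper, with two harmless differences in bookkeeping. First, the paper verifies completeness by summing the squares of the character degrees, whereas you rely on Clifford theory's exhaustiveness; second, to see that every $A$-orbit of fully non-trivial characters meets $\{\lambda:\Ker\lambda=\langle(1,1)\rangle\}$, the paper counts the $d$ orbits via the regular action of $G/C_G(N_p)\times G/C_G(N_q)$ on pairs of non-trivial cosets and the diagonal embedding of $G/N$, while you count by computing $|\text{Stab}_A(\langle(1,1)\rangle)|=(p-1)(q-1)/d$ directly. Both routes are equally valid.

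One small slip: your blanket claim that $I_G(\lambda)/N$ is cyclic is not true in the case $\lambda=1_N$, where $I_G(\lambda)/N=A$ is abelian but generally not cyclic. This does no damage, since in that case you do not actually use cyclicity (the irreducibles over $1_N$ are simply the linear characters of $G/N\cong A$), and in the remaining cases $I_G(\lambda)/N\in\{\{1\},\langle a\rangle,\langle b\rangle\}$ really is cyclic, which is what guarantees that $\lambda$ extends.
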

\begin{proof}
	If $\psi$ is a linear characters of $N$ with kernel of order $p\cdot q$, then 
	\begin{equation}\label{eqn psii psij}
		(\psi\uparrow_N^G, \psi\uparrow_N^G) = (\psi, \psi\uparrow_N^G|_N) = \sum_{x\in A} (\psi, \psi^x)
	\end{equation}
	by the Mackey formula. The character $\psi^x$ is again an irreducible character of $N$, and $\psi^x = \psi$ if and only if $\Ker(\psi)^x=\Ker(\psi)$ and $n\cdot \Ker(\psi)=x^{-1}nx\cdot \Ker(\psi)$ for all $n\in N$.
	This only happens if $n\cdot \Ker(\psi)_p = (n\cdot \Ker(\psi)_p)^x$ for every $n\in N_p$ and $n\cdot \Ker(\psi)_q = (n\cdot \Ker(\psi)_q)^x$ for every $n\in N_q$. By the regularity assertions of Proposition~\ref{prop all pq conjugate}~\eqref{prop all pq conjugate, G/C semireg} this implies that
	$x\in C_A(N_p)\cap C_A(N_q) = \{1\}$.  
	 Looking again at the right hand side of \eqref{eqn psii psij} we conclude that $(\psi\uparrow_N^G, \psi\uparrow_N^G)=1$, that is, $\psi\uparrow_N^G$ is irreducible.
	 
	If $\psi'$ is another irreducible character of $N$ with kernel of order $p\cdot q$, then  $(\psi\uparrow_{N}^G,\psi'\uparrow_{N}^G)$ is either zero or one, and, to be more precise, it follows from  Mackey's formula that  $(\psi\uparrow_{N}^G,\psi'\uparrow_{N}^G)=1$ if and only if $\psi'=\psi^x$ for some $x\in A$.
	Hence the number of irreducible characters of $G$ we have constructed so far is the number of $G$-orbits of characters of $N$ with kernel of order $p\cdot q$, and all of these characters have degree $[G:N]=|A|$. 
	
	Now we will show that the number of $G$-orbits of characters of $N$ with kernel of order $p\cdot q$ is equal to $d$. Denote by $\zeta_{p\cdot q}$ a primitive $p\cdot q$-th root of unity. A character $\psi$ of $N$ with kernel of order $p\cdot q$ is uniquely determined by the fibre $\psi^{-1}(\{\zeta_{p\cdot q}\})$, which can be written as a coset $n_\psi \cdot \Ker(\psi)$ for some $n_\psi \in N$ of order $p\cdot q$ such that $(n_\psi)_p \not\in \Ker(\psi)_p$ and $(n_\psi)_q\not\in\Ker(\psi)_q$. The coset $n_\psi \cdot \Ker(\psi)$ is equal to the product of the coset $(n_\psi)_p \cdot \Ker(\psi)_p$ and $(n_\psi)_q \cdot \Ker(\psi)_q$. Conversely, a pair of cosets $n_1X_1$ and $n_2X_1$, with $X_1\leq N$ of order $p$, $n_1\in N_p$ not contained in $X_1$, $X_2\leq N_q$ of order $q$ and $n_2\in N_q$ not contained in $X_2$ determines a character $\psi$ of $N$ with kernel of order $p \cdot q$. The group $G/C_G(N_p)\times G/C_G(N_q)$ acts regularly on the set of such pairs by Proposition~\ref{prop all pq conjugate}~\eqref{prop all pq conjugate, G/C semireg}, and $G/N$ embeds diagonally into $G/C_G(N_p)\times G/C_G(N_q)$ since $C_G(N_p)\cap C_G(N_q)=N$. The index of the image of this embedding is $[A:C_A(N_p)]\cdot [A:C_A(N_q)]\cdot |A|^{-1}=d$. Therefore $G$ has exactly $d$ orbits on such pairs of cosets, and therefore also on characters of $N$ with kernel of order $p\cdot q$.
	
	Next let us show that if $\psi$ is a linear character of $C_G(N_p)$ whose restriction to $N$ is $\varphi_p$ then
	$\psi\uparrow_{C_G(N_p)}^G$ is irreducible. By Frobenius reciprocity and Mackey we have
	\begin{equation}
		(\psi\uparrow_{C_G(N_p)}^G, \psi\uparrow_{C_G(N_p)}^G)= \sum_{x\in A/C_A(N_p)} (\psi, \psi^x)
	\end{equation}
	Denote by $\zeta_p$ a primitive $p$-th root of unity. We have $\psi^{-1}(\{\zeta_p\})\cap N_p = \varphi_p^{-1}(\{\zeta_p\})$, which is a coset of $\Ker(\varphi_p)$, a group of order $p$. Since 
	$A/C_A(N_p)$ acts regularly on the set of non-trivial cosets of subgroups of order $p$ in $N_p$ by Proposition~\ref{prop all pq conjugate}~\eqref{prop all pq conjugate, G/C semireg} it follows that	$(\psi^x)^{-1}(\{\zeta_p\})\cap N_p=(\psi^{-1}(\{\zeta_p\})\cap N_p)^x\neq \psi^{-1}(\{\zeta_p\})\cap N_p$ (and therefore $\psi \neq \psi^x$) whenever $x\in A$ such that $x \notin C_A(N_p)$. 
	The irreducibility of $\psi\uparrow_{C_G(N_p)}^G$ now follows.
	Moreover, if we have another linear character $\psi'$ of $C_G(N_p)$ such that $\psi'|_{N}=\varphi_p$ and 
	$\psi'\neq \psi$, then $\psi(x)\neq\psi'(x)$ for some $x\in C_A(N_p)$. But $\psi\uparrow_{C_G(N_p)}^G|_A = \psi\uparrow_{C_A(N_p)}^A$, which restricted to $C_A(N_p)$ is just $[A:C_A(N_p)]\cdot \psi$. Since the same holds for $\psi'$ it follows that $\psi\uparrow_{C_G(N_p)}^G(x) \neq \psi'\uparrow_{C_G(N_p)}^G(x)$. We have 
	$|C_G(N_p)/N_p|=|C_A(N_p)|$ possibilities for $\psi$ in total, and the degree of $\psi\uparrow_{C_G(N_p)}^G$ is 
	$[G:C_G(N_p)]=[A:C_A(N_p)]$.
	
	Analogously we get $|C_A(N_q)|$ characters of the form given in the third point of the statement,
	and their degrees are $[A:C_A(N_q)]$. 
	As for the linear characters, there are $|G/N|=|A|$ irreducible characters with $N$ in their kernel.
	
	These four families of irreducible characters of $G$ are disjoint owing to the fact that the intersection of the kernel of a character with $N$ is something different depending on the family the character comes from (it is either $\{1\}$, $N_q$, $N_p$ or $N$). So all that is left to do now is check that the sum of the squares of the degrees of the characters we have constructed is equal to $|G|$: 
	$$
		\begin{array}{rl}
		&d\cdot |A|^2 + |C_A(N_p)|\cdot [A:C_A(N_p)]^2 + |C_A(N_q)|\cdot [A:C_A(N_q)]^2 + |A|\cdot 1^2 \\\\
		=& |A|\cdot (d\cdot |A| + [A:C_A(N_p)]+[A:C_A(N_q)]+ 1) \\\\
		=& |A|\cdot ((p^2-1)\cdot (q^2-1) + (p^2-1) + (q^2-1) + 1) \\\\
		=& |A| \cdot p^2\cdot q^2=|G|
		\end{array}
	$$
\end{proof}

\begin{prop}\label{prop eigenvalue 1}
	Let $G=G(p,q;d;\alpha,\beta)$ and let $U=\langle c \rangle$ be a cyclic group of order $p\cdot q$. Let
	$\eps\OfU: G\longrightarrow \Z:\ g \mapsto \eps_{g^G}\OfU$ be a class function with 
	$
		\sum_{g^G} \eps_{g^G}\OfU = 1
	$ and $\eps_{g^G}\OfU\neq 0$ only for elements $g\in G$ of order $p\cdot q$. Define
	$$
		\chi = \sum_{g^G} \eps_{g^G}\OfU\cdot 1\uparrow_{[g]}^{G\times U}
	$$
	Then
	$$
		(\chi, \eta\otimes 1_U) \neq 0 \quad\textrm{ for all $\eta\in \Irr_{\C}(G)$}
	$$
\end{prop}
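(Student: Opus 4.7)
The plan is to reduce $(\chi, \eta\otimes 1_U)$ to a single non-negative multiplicity using Frobenius reciprocity, then exploit the structural results in Propositions~\ref{prop all pq conjugate} and \ref{prop irr chars}. First, Frobenius reciprocity gives
$$
(\chi, \eta\otimes 1_U) = \sum_{g^G}\varepsilon_{g^G}\OfU\cdot \bigl(1_{[g]}, (\eta\otimes 1_U)|_{[g]}\bigr).
$$
Since $c$ has order $p\cdot q$ and $g$ has order $p\cdot q$ for every $g$ in the support of $\varepsilon\OfU$, the projection $[g]\to\langle g\rangle$ is an isomorphism of cyclic groups of order $p\cdot q$, under which $(\eta\otimes 1_U)|_{[g]}$ corresponds to $\eta|_{\langle g\rangle}$ and $1_{[g]}$ to $1_{\langle g\rangle}$. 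Each summand above therefore equals $m(\eta, g) := \bigl(1_{\langle g\rangle},\eta|_{\langle g\rangle}\bigr)$, the multiplicity of the trivial character in the restriction of $\eta$ to $\langle g\rangle$.

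Next I would observe that $m(\eta, g)$ depends only on the subgroup $\langle g\rangle$ (not on the choice of generator), and moreover only on its $G$-conjugacy class, because $\eta$ is a class function. By Proposition~\ref{prop all pq conjugate}\eqref{prop all pq conjugate, G trans auf pq}, $G$ acts transitively on the cyclic subgroups of order $p\cdot q$ in $N$. So $m(\eta, g)$ takes one common value $c(\eta)$ for every $g$ in the support of $\varepsilon\OfU$, and combining with the hypothesis $\sum_{g^G}\varepsilon_{g^G}\OfU=1$ collapses the displayed sum to $(\chi,\eta\otimes 1_U) = c(\eta)$.

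It remains to establish $c(\eta)\geq 1$ for every $\eta\in\Irr_{\C}(G)$. Here I invoke Proposition~\ref{prop irr chars}: each such $\eta$ is induced from a linear character $\psi$ of some subgroup $H\leq G$ whose kernel contains $\langle(1,1)\rangle$. Taking $g_0:=(1,1)$, which is of order $p\cdot q$, I apply Mackey's formula to $\psi\uparrow_H^G|_{\langle g_0\rangle}$. The summand from the trivial double coset is $\psi|_{H\cap\langle g_0\rangle}\uparrow_{H\cap\langle g_0\rangle}^{\langle g_0\rangle}$; since $H\cap\langle g_0\rangle\leq\langle g_0\rangle\leq\Ker(\psi)$, this is induction of the trivial character, which contains $1_{\langle g_0\rangle}$ with multiplicity one. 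All remaining Mackey summands are proper characters and contribute non-negatively, giving $c(\eta)\geq 1$ and concluding the proof.

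The crux of the argument is the constancy of $m(\eta,g)$ across the support of $\varepsilon\OfU$: once one notices that restriction to a cyclic subgroup depends on the subgroup only, the transitivity statement in Proposition~\ref{prop all pq conjugate}\eqref{prop all pq conjugate, G trans auf pq}---which is quite specific to the structure of $G(p,q;d;\alpha,\beta)$---immediately reduces the problem to a single application of Mackey's formula, where the bound $c(\eta)\geq 1$ falls out of the kernel containment provided by Proposition~\ref{prop irr chars}.
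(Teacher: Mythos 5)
Your proof is correct and follows essentially the same route as the paper's: Frobenius reciprocity plus the observation that each summand depends only on the cyclic group $\langle g\rangle$ (the paper phrases this as $\frac{1}{pq}\sum_j\eta(g_i^j)$ depending only on $\langle g_i\rangle=\langle(1,1)\rangle$), transitivity of $G$ on cyclic subgroups of order $p\cdot q$ in $N$ together with $\sum\eps_{g^G}\OfU=1$ to collapse the sum, and finally Mackey's formula applied to the description of $\Irr_\C(G)$ in Proposition~\ref{prop irr chars} to isolate a trivial-double-coset contribution of $1$. The only cosmetic difference is that you transport the computation from $[g]\leq G\times U$ to $\langle g\rangle\leq G$ before applying Mackey, while the paper stays in $G\times U$.
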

\begin{proof}
	By Proposition \ref{prop all pq conjugate}~\eqref{prop all pq conjugate, G trans auf pq} we can choose  
	$g_1,\ldots, g_k\in \langle (1,1) \rangle$ such that $g_i^G\neq g_j^G$ whenever $i\neq j$ and 
	$\eps_{g^G}\OfU\neq 0$ for some $g\in G$ if and only if $g^G=g_i^G$ for some $i$. Our assumptions ensure that each $g_i$ has order $p\cdot q$, which implies that it generates $\langle (1,1)\rangle$. Now, if $\eta\in\Irr_{\C}(G)$ then
	$$
		(\chi, \eta \otimes 1_U) 
		=\sum_{i=1}^k \eps_{g_i^G}\OfU\cdot \left(1\uparrow_{[g_i]}^{G\times U}, \eta\otimes 1_U \right)
	$$
	We claim that the value of $\left(1\uparrow_{[g_i]}^{G\times U}, \eta\otimes 1_U\right)$ is independent of $i$, which would imply that $(\chi, \eta \otimes 1_U) = (1\uparrow_{[g_1]}^{G\times U}, \eta \otimes 1_U)$ (of course we could have used any $g_i$ here instead of $g_1$). By Frobenius reciprocity we have 
	$$
			\left(1\uparrow_{[g_i]}^{G\times U},\eta\otimes1_U\right)=
			\left(1_{[g_i]},(\eta\otimes1_U)|_{[g_i]}\right)=
			\displaystyle \frac{1}{p \cdot q}\cdot \sum_{j=1}^{p \cdot q} \eta(g_i^j)
	$$
	The value of the right hand side manifestly only depends on the group generated by $g_i$, which is $\langle (1,1)\rangle$ independent of the value of $i$.
	
	It remains to be seen that $(1\uparrow_{[g_1]}^{G\times U}, \eta \otimes 1_U)$ is non-zero for every $\eta$. By Proposition \ref{prop irr chars} any $\eta\in \Irr_{\C}(G)$ can be written as 
	$\varphi\uparrow_K^G$ where $\langle (1,1)\rangle \leq K\leq G$ and $\varphi$ is a linear character of $K$ whose kernel contains $\langle (1,1)\rangle =\langle g_1 \rangle$. Hence
	$$
		\begin{array}{rcl}
		(1\uparrow_{[g_1]}^{G\times U}, \eta \otimes 1_U) &=&
		(1\uparrow_{[g_1]}^{G\times U}, (\varphi \otimes 1_U)\uparrow_{K\times U}^{G\times U}) 
		= 	(1_{[g_1]}, (\varphi \otimes 1_U)\uparrow_{K\times U}^{G\times U}|_{[g_1]}) \\ \\
		&=& \displaystyle \sum_{x\in K\times U \setminus G\times U/[g_1]} \left(1_{[g_1]}, (\varphi\otimes 1_U)\uparrow_{(K\times U)^x\cap [g_1]}^{[g_1]}\right)\\\\ &=& \left(1_{[g_1]}, (\varphi\otimes 1_U)|_{[g_1]} \right)  + \textrm{ (other terms)} = (1_{[g_1]}, 1_{[g_1]}) + \textrm{ (other terms)}
		\end{array}
	$$
	which is clearly greater than zero.
\end{proof}

\begin{lemma}\label{lemma xi proper} Let $G = G(p,q;d;\alpha,\beta)$, let $\varepsilon\OfU: G \rightarrow \mathbb{Z}$ be a class function which is non-vanishing only on elements of $N$ of order $p\cdot q$ such that 
$$
	\sum_{g^G} \eps_{g^G}\OfU = \sum_{i=1}^d \eps_{(\alpha^i,1)^G}= 1
$$	
and let $U=\langle c \rangle$ be a cyclic group of order $p\cdot q$. Set $n = (0,1)\in N_q$. Recall also the definition of $K_p$ and the $r_i(p)$'s from Notation \ref{notation K_p, r_i}. We will write $r_i$ instead of $r_i(p)$ below.
	\begin{enumerate}
		\item The character
		$$
			\xi_n = \sum_{m \in N_{p}} \varepsilon_{(m\cdot n)^G}\OfU\cdot 1\uparrow_{[m]_{p}}^{N_{p}\times U_{p}}  \quad \textrm{ (same as in Remark \ref{remark formula xin})}
		$$
		is a proper character of $N_p \times U_p$ 
		if and only if
		$$\begin{pmatrix} r_1 & r_2 & \dots & r_d \\ r_2 & r_3 & \dots & r_1 \\ \vdots & \vdots & \ddots & \vdots \\ r_d & r_1 & \dots & r_{d-1} \end{pmatrix} \begin{pmatrix} \varepsilon_{(\alpha,1)^G}\OfU \\ \varepsilon_{(\alpha^2,1)^G}\OfU \\ \vdots \\ \varepsilon_{(\alpha^d,1)^G}\OfU \end{pmatrix} \geq 0.$$
		
		\item Let $\varphi$ be an irreducible rational character of $N_p \times U_p$. Let 
		$$K = \Ker(\varphi)\cap N_p$$
		Then the values of 
		$$\mu(\varphi, n) = \frac{(\varphi, \xi_{n})}{(\varphi,\varphi)} \cdot \frac{1}{[C_G(n)\cap N_G({\rm Ker}(\varphi)):N]} \quad \textrm{(same as in Lemma \ref{lemma extending chin})}$$
		are as follows:
		\begin{enumerate}
			\item If $\varphi$ is the trivial character then $\mu(\varphi,n) = 1$.
			\item If there is no element $m_0 \in N_p$ such that $(m_0,c_p) \in \Ker(\varphi)$ then $\mu(\varphi, n) = 0$.
			\item Otherwise choose an $m_0 \in N_p$ such that $(m_0,c_p)\in \Ker(\varphi)$. 
			\begin{enumerate}
			\item If $m_0 \in K$ then $\mu(\varphi,n) = 1$. 
			\item If $m_0\not\in K$ we can choose a $g \in G$ such that $(m_0\cdot K)^g = (\alpha + \mathbb{F}_p,0)$ by Proposition~\ref{prop all pq conjugate}~\eqref{prop all pq conjugate, G/C semireg}. Choose  $\ell(g)\in \Z$ such that $(1,0)^g = (\alpha^{\ell(g)},0)$. Then we have
			$$ \mu(\varphi,n) = \sum_{i=0}^{d-1} r_{\ell(g)+i}\cdot \varepsilon_{(\alpha^i,1)^G}\OfU  $$
			\end{enumerate}
		\end{enumerate} 
	\end{enumerate}
	If we reverse the roles of $p$ and $q$ as well as $\alpha$ and $\beta$ then the same statements also hold for $\xi_{n}$ with $n = (1,0) \in N_p$.
\end{lemma}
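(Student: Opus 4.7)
My plan is to apply Lemma~\ref{ScalarProdXiPsi} throughout, with the primes reversed: since $n=(0,1) \in N_q$, the ``$p$'' of that lemma is our prime $q$ and its ``$p'$'' is our prime $p$, so $\xi_n$ is a character of $N_p \times U_p$. I will repeatedly use the description of $G = G(p,q;d;\alpha,\beta)$ afforded by Proposition~\ref{prop all pq conjugate}: $C_G(n) = N \rtimes \langle a\rangle$ acts on $N_p$ through multiplication by $\langle \alpha^d\rangle$, so its orbits are $\{0\}$ together with $\alpha^j\langle\alpha^d\rangle$ for $j=0,\ldots,d-1$; the group $G/C_G(N_p)$ acts regularly on the non-trivial cosets of subgroups of order $p$ in $N_p$ (Proposition~\ref{prop all pq conjugate}\eqref{prop all pq conjugate, G/C semireg}); and $C_G(n)$ acts transitively on the $p+1$ such subgroups (Proposition~\ref{prop all pq conjugate}\eqref{prop all pq conjugate, C_G(N_q) trans}).

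For part~(1), the final criterion of Lemma~\ref{ScalarProdXiPsi} requires the indicated sum to be non-negative for every $K \leq N_p$ with $N_p/K$ cyclic and every $m_0 \in N_p$. The only possibilities for $K$ are $N_p$ itself (for which the inequality is the tautology $\sum_{g^G}\varepsilon_{g^G}\OfU = 1 \geq 0$) and the subgroups of order $p$. By $C_G(n)$-transitivity we may take $K = (\mathbb{F}_p,0)$, and by the regular action of $G/C_G(N_p)$ every non-trivial coset $m_0\cdot K$ can be moved to $K_p = (\alpha+\mathbb{F}_p,0)$ by multiplication by some $\alpha^{-s}$. For an orbit representative $m=(\alpha^j,0)$ this converts $|m^{C_G(n)}\cap m_0\cdot K|$ into $|\alpha^{j+s}\langle\alpha^d\rangle \cap K_p| = r_{j+s}$. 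As $m_0\cdot K$ ranges over its $d$ orbits under $C_G(n)$, the residue $s \bmod d$ takes every value, and the resulting $d$ inequalities are exactly the rows of the stated cyclic matrix.

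For part~(2), case~(a) is an immediate computation from Lemma~\ref{ScalarProdXiPsi}(1): the trivial character has $\Ker(\varphi) = N_p \times U_p$, so $N_G(\Ker(\varphi))=G$, and the two factors $[C_G(n):N]=(p^2-1)/d$ cancel against the assumed $\sum\varepsilon_{g^G}\OfU = 1$. Case~(b) is Lemma~\ref{ScalarProdXiPsi}(2). For case~(c), I first observe that because the image of $\varphi$ is cyclic while $N_p \cong \mathbb{F}_{p^2}$ is not, the subgroup $K$ must have order exactly $p$ and $|\Ker(\varphi)|=p^2$; hence $\varphi$ has degree $p-1$ and its complex Galois orbit has size $p-1$. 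In subcase~(c)(i) one has $\Ker(\varphi) = K \times U_p$, so $N_G(\Ker(\varphi))$ is the stabilizer of $K$ in $G$ and $[C_G(n)\cap N_G(\Ker(\varphi)):N] = (p-1)/d$; a direct count gives $|\alpha^j\langle\alpha^d\rangle \cap \mathbb{F}_p^\times| = (p-1)/d$ for every $j$, whence $(\xi_n,\varphi) = (p-1)^2/d$ and $\mu = 1$ after cancellation. In subcase~(c)(ii) the subgroup $\Ker(\varphi)$ is generated by $K$ and $(m_0,c_p)$, and the regular action of $G/C_G(N_p)$ on non-trivial cosets forces $N_G(\Ker(\varphi)) = C_G(N_p)$, whence $C_G(n)\cap N_G(\Ker(\varphi)) = N$ and the index is $1$. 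The transport $(m_0\cdot K)^g = K_p$ converts the intersection counts into $r_{\ell(g)+j}$ exactly as in part~(1), and the Galois factor $p-1$ cancels against $(\varphi,\varphi) = p-1$ to yield the claimed formula.

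The main bookkeeping obstacle will be keeping the three interlocking ingredients---the Galois orbit size, $(\varphi,\varphi)$, and $[C_G(n)\cap N_G(\Ker(\varphi)):N]$---consistent across the subcases; once the structure of $\Ker(\varphi)$ is pinned down they combine cleanly to give $0$, $1$, or the $r_i$-formula. The symmetric statement for $n=(1,0)\in N_p$ follows by identical arguments with $p$ and $q$ interchanged.
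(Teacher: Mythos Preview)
Your proposal is correct and follows essentially the same route as the paper: both reduce to Lemma~\ref{ScalarProdXiPsi}, transport non-trivial cosets to $K_p=(\alpha+\mathbb{F}_p,0)$ via the regular action of Proposition~\ref{prop all pq conjugate}\eqref{prop all pq conjugate, G/C semireg}, and compute the key intersection counts $|\alpha^j\langle\alpha^d\rangle \cap \mathbb{F}_p^\times| = (p-1)/d$ (the paper records this as equation~\eqref{p-1Durchd}). One small point to add in part~(1): you only treat the non-trivial cosets $m_0 K$ explicitly, but the criterion in Lemma~\ref{ScalarProdXiPsi} also requires the case $m_0\in K$; there the sum equals $(p-1)/d\cdot\sum_i\varepsilon_{(\alpha^i,1)^G}=(p-1)/d\geq 0$ (precisely the computation you carry out in~(2)(c)(i)), so it imposes no extra constraint and the ``if and only if'' reduces to the $d$ cyclic-matrix rows as you claim.
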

\begin{proof}
	
	For every $j\in\{0,\ldots,d-1\}$ the inequalities holding by assumption can be understood as follows:
	\begin{align}\label{eqn ineq assumed} 0 \leq \sum_{i=1}^d r_{j+i}\cdot \varepsilon_{(\alpha^i,1)^G} &= \sum_{i=1}^d |\{(\alpha^{j+i},0)^{\langle a \rangle} \cap K_p \}|\cdot  \varepsilon_{(\alpha^i,1)^G}\OfU \\
	&= \sum_{i=1}^d |\{(\alpha^{j+i},0)^{\langle a \rangle} \cap (\alpha + \mathbb{F}_p,0) \}|\cdot  \varepsilon_{(\alpha^i,1)^G}\OfU
	\end{align}
	
	Furthermore, for any fixed $i$ we have
	\begin{align}\label{p-1Durchd}
	\left|(\alpha^i,0)^{\langle a \rangle} \cap (\mathbb{F}_p,0)\right| &= \left|\left\{1\leq t \leq \frac{p^2-1}{d} \ \bigg| \ \alpha^{i+td} \in \mathbb{F}_p \right\}\right| \\ 
	&= \left|\left\{1\leq t \leq \frac{p^2-1}{d} \ \bigg| \ i+td \equiv 0 \bmod (p+1) \right\}\right| = \frac{p-1}{d}
	\end{align}
	where the last equality follows since $d$ and $p+1$ are coprime.
	
	Now $C_G(n) = \langle a \rangle$ and representatives of the $C_G(n)$-conjugacy classes in $N_p \setminus \{(0,0)\}$ are given by $(\alpha,0),(\alpha^2,0),...,(\alpha^d,0)$ by Proposition~\ref{prop all pq conjugate}~\eqref{prop all pq conjugate, K'klassen}. 
	So by Lemma~\ref{ScalarProdXiPsi} we know that $\xi_{n}$ is a proper character of $N_p \times U_p$ if and only if for every subgroup $K = (\mathbb{F}_p\cdot \alpha^{s},0)$ of order $p$ in $N_p$, where $s\in \{1,\ldots,p^2-1 \}$, and every $(m_0,0) \in N_p$ we have 
	$$\sum_{i=0}^{d-1} \left|\left\{(\alpha^i, 0)^{\langle a \rangle} \cap (m_0+\F_p\cdot \alpha^s,0)\right\}\right| \cdot \varepsilon_{(\alpha^i, 1)^G}\OfU \geq 0. $$
	If $m_0 \notin \mathbb{F}_p\cdot \alpha^{s}$ then by Proposition~\ref{prop all pq conjugate}~\eqref{prop all pq conjugate, G/C semireg} there exists a $g \in G$ such that $(m_0+\F_p\cdot \alpha^s,0)^g = (\alpha + \mathbb{F}_p,0)$. Pick an $\ell(g)$ such that $(1,0)^g = (\alpha^{\ell(g)},0)$.
	So the condition we have to verify can be formulated as 
	$$\sum_{i=0}^{d-1} \left|\left\{(\alpha^{i+\ell(g)}, 0)^{\langle a \rangle} \cap (\alpha+\mathbb{F}_p,0)\right\}\right|\cdot  \varepsilon_{(\alpha^i, 1)^G}\OfU \geq 0 $$
	and this holds by \eqref{eqn ineq assumed} with $j=\ell(g)$.
	
	So assume $m_0 \in  \mathbb{F}_p\cdot \alpha^{s}$ and let $g \in C_G(n)$ be chosen such that $ (\mathbb{F}_p\cdot \alpha^{s},0)^g = (\mathbb{F}_p, 0)$, which exists by Proposition~\ref{prop all pq conjugate}~\eqref{prop all pq conjugate, C_G(N_q) trans}. Then by \eqref{p-1Durchd} we have
	\begin{align*}
	\sum_{i=0}^{d-1} & \left|\left\{(\alpha^i, 0)^{\langle a \rangle} \cap (m_0+\F_p\cdot \alpha^s,0)\right\}\right|\cdot \varepsilon_{(\alpha^i, 1)^G}\OfU = \sum_{i=0}^{d-1} \left|\left\{(\alpha^i, 0)^{g\langle a \rangle} \cap (\F_p\cdot \alpha^s, 0)^g\right\}\right|\cdot  \varepsilon_{(\alpha^i, 1)^G}\OfU \\
	= & \sum_{i=0}^{d-1} \left|\left\{(\alpha^{i + \ell(g)}, 0)^{\langle a \rangle} \cap (\mathbb{F}_p,0) \right\}\right|\cdot \varepsilon_{(\alpha^i, 1)^G}\OfU = \sum_{i=0}^{d-1} \frac{p-1}{d}\cdot \varepsilon_{(\alpha^i, 1)^G}\OfU = \frac{p-1}{d}
	\end{align*}
	This finishes the proof of the first claim.
	
	Let $\varphi_0$ be an irreducible complex character of $N_p \times U_p$ such that $\varphi$ is the sum of the Galois-conjugates of $\varphi_0$. We can reformulate the definition of $\mu(\varphi, n)$ as
	\begin{align}\label{MuFormel}
	\mu(\varphi, n) = \frac{(\varphi_0, \xi_n)}{[C_G(n)\cap N_G(\Ker(\varphi)):N]} 
	\end{align}
	If $\varphi$ is the trivial character then $C_G(n) \cap N_G(\Ker(\varphi)) = C_G(n)$. Using Lemma \ref{ScalarProdXiPsi} we get
	$$(\varphi_0, \xi_n) = [C_G(n):N] \cdot \sum_{(n\cdot m)^G, m \in N_{p}} \eps_{(n\cdot m)^G}\OfU=[C_G(n):N]$$
	which shows that $\mu(\varphi,n)$ is as desired.
	 
	Next, if there is no element $m_0 \in N_p$ such that $(m_0,c_p)\in \Ker(\varphi)$ then Lemma \ref{ScalarProdXiPsi} implies that $(\varphi_0,\xi_n)=0$, and again $\mu(\varphi,n)$ is as desired.
	
	So let us assume that we have an $m_0 \in N_p$ such that $(m_0,c_p) \in \Ker(\varphi)$.
	Then by Lemma \ref{ScalarProdXiPsi}
	$$
		(\varphi_0, \xi_n)= \sum_{m^{C_G(n)},m\in N_p} \left|m^{C_G(n)} \cap m_0\cdot K\right| \cdot \varepsilon_{(n\cdot m)^G}\OfU
	$$
	 If $m_0 \in K$ then $\Ker(\varphi)=K\times U$, and therefore $N_G(\Ker(\varphi))=N_G(K)$. Since $C_G(n)$ acts transitively on the set of cyclic groups of order $p$ in $N_p$ (by Proposition~\ref{prop all pq conjugate}~\eqref{prop all pq conjugate, C_G(N_q) trans}), and $N_G(\Ker(\varphi))$ is the stabiliser in $G$ of one of these cyclic groups of order $p$ (namely $K$), it follows that $[C_G(n):C_G(n) \cap N_G(\Ker(\varphi))]$ is equal to the number of cyclic subgroups of order $p$ of $N_p$, which is $p+1$. By the regularity asserted in Proposition~\ref{prop all pq conjugate}~\eqref{prop all pq conjugate, G/C semireg} $[G:C_G(n)]=[G:C_G(N_q)]=q^2-1$. Therefore $$
	 	\begin{array}{rcl}
	 	[C_G(n)\cap N_G(\Ker(\varphi)):N] &=& \displaystyle \frac{[G:N]}{[G:C_G(n)]\cdot [C_G(n):C_G(n)\cap \Ker(\varphi)]}\\\\ &=&\displaystyle
	 	\frac{(p^2-1)\cdot (q^2-1)}{d} \cdot \frac{1}{q^2-1}\cdot \frac{1}{p+1} = \frac{p-1}{d}
	 	\end{array}
	 $$ 
	 By Proposition \ref{prop all pq conjugate}~\eqref{prop all pq conjugate, C_G(N_q) trans} we can also find a $g\in G$ such that $K^g=(\F_p,0)$.
	 So 
	 $$
	 	\begin{array}{rcl}
	 	\mu(\varphi, n) &=&\displaystyle \frac{d}{p-1} \cdot \sum_{m^{C_G(n)}, m\in N_p} \left|m^{C_G(n)} \cap K\right| \cdot \varepsilon_{(n\cdot m)^G}\OfU\\\\ &=&\displaystyle \frac{d}{p-1}\cdot \sum_{i=0}^{d-1} \left| (\alpha^i,0)^{\langle a \rangle} \cap (\F_p,0)\right| \cdot \varepsilon_{(\alpha^i, 1)^G}\OfU = 
	 	\displaystyle \frac{d}{p-1}\cdot \sum_{i=0}^{d-1} \frac{p-1}{d}\cdot \varepsilon_{(\alpha^i, 1)^G}\OfU=1
	 	\end{array}
	 $$
	 where we used \eqref{p-1Durchd} to compute the cardinalities. This settles the case $m_0\in K$.
	
	Finally assume $m_0 \notin K$. Since $C_G(n)$ acts semiregularly on the non-trivial cosets of cyclic groups of order $p$ in $N_p$ and $N_G(\Ker(\varphi))$ fixes the coset $m_0\cdot K$ we have $C_G(n) \cap N_G(\Ker(\varphi)) = N$. It follows that $\mu(\varphi, n)=(\varphi_0, \xi_n)$. We may again choose a $g\in G$ such that $(m_0\cdot K)^g=(\alpha+\F_p,0)$, and we can define $\ell(g)$ as before. Hence
	\begin{align*}
	 (\varphi_0, \xi_n)&= \sum_{m^{C_G(n)}, m \in N_p} \left|m^{C_G(n)} \cap (m_0\cdot K)\right|\cdot  \varepsilon_{(m\cdot n)^G}\OfU   \\
	&= \sum_{i=0}^{d-1} \left|(\alpha^i,0)^{\langle a \rangle} \cap (m_0\cdot K)\right| \cdot \varepsilon_{(\alpha^i, 1)^G}\OfU = \sum_{i=0}^{d-1} \left|(\alpha^{i+\ell(g)},0)^{\langle a \rangle} \cap (\alpha + \mathbb{F}_p,0)\right|\cdot \varepsilon_{(\alpha^i, 1)^G}\OfU \\
	&= r_{\ell(g)}\cdot \varepsilon_{(1,1)^G}\OfU + r_{\ell(g)+1}\cdot \varepsilon_{(\alpha,1)^G}\OfU + \ldots + r_{\ell(g)+d-1}\cdot \varepsilon_{(\alpha^{d-1},1)^G}\OfU
	\end{align*}
	as claimed.
\end{proof}

\begin{proof}[Proof of Theorem \ref{thm zc Gpqd}]
	First we need to check that $\eps\OfU$ satisfies the conditions of Theorem \ref{thm existence u semilocal}, that is, 
	\ref{cond sum 1}-\ref{cond xi proper char}.
	\begin{enumerate}
		\item Condition \ref{cond sum 1} is satisfied by definition of $\eps\OfU$. 
		\item Condition \ref{cond cent n eq N} is satisfied by Proposition \ref{prop all pq conjugate}~\eqref{prop all pq conjugate, N selbstzentrali}. 
		\item Condition \ref{cond xi proper char} holds by Lemma \ref{lemma xi proper}.
	\end{enumerate}
	This furnishes us with a unit of order $p\cdot q$ in $\UU(\Z_{\pi}G)$ having the desired partial augmentations. Now let us check the conditions of Theorem \ref{thm existence u global}, that is, \ref{cond eichler} and \ref{cond eigenvalue}.
	\begin{enumerate}
		\item We need to show that $G$ does not have an epimorphic image isomorphic to one of the groups in the list given in Theorem~\ref{thm existence u global}.
		Since all groups in that list are non-commutative subgroups of the real quaternions they all have an irreducible complex character of degree two. It follows from Proposition \ref{prop irr chars} that the degrees of the irreducible characters of $G$ are 
			$$\left\{1,\ p^2-1,\ q^2-1, \frac{(p^2-1)\cdot (q^2-1)}{d}\right\}$$
		All of these numbers, except for $1$, are greater than or equal to $2^2-1=3$, so clearly $G$ cannot surject onto a group which has an irreducible complex character of degree two.
		\item We need to check that $(\chi, \varphi\otimes 1_U)\neq 0$ for all $\varphi \in \Irr_{\C}(G)$, where $U$ is  cyclic group of order $p\cdot q$  and $\chi$ is obtained from $\eps\OfU$ via formula \eqref{eqn def chi}. This follows by Proposition \ref{prop eigenvalue 1}.
	\end{enumerate}
	This yields a unit $u\in \UU(\Z G)$, which also has partial augmentations given by $\eps\OfU$. It follows immediately from the double action formalism (Propositions \ref{prop double action iso implies conj} and \ref{prop character double action module}) that if $\eps\OfU$ is non-vanishing on more than one conjugacy class, then $u$ is not conjugate in $\UU(\Q G)$ to an element of the form $\pm g$ for $g\in G$.
\end{proof}

\begin{proof}[Proof of Theorem A]
Set $G = G(7,19;3;\alpha,\beta)$, where $\alpha$ is a root of the polynomial $X^2-X+3$ over $\mathbb{F}_7$ and $\beta$ is a root of $X^2-X+2$ over $\mathbb{F}_{19}$.  Let $U = \langle c \rangle$ be a cyclic group of order $7 \cdot 19$.   Note that representatives of the conjugacy classes of elements of order $7 \cdot 19$ in $G$ are given by $(1,1)$, $(1,\beta)$, $(1,\beta^2)$, or, alternatively, $(1,1)$, $(\alpha, 1)$, $(\alpha^2, 1)$. 
We will need to use both systems of representatives, and to avoid confusion we should note that  $(1, \beta)$ is conjugate to $(\alpha^2, 1)$ since $(1, \beta)^{c^{-1}\cdot a} = (\alpha^{-1}, 1)^a = (\alpha^2, 1)$.

Define a class function $\varepsilon\OfU: G \rightarrow \mathbb{Z}$ vanishing everywhere except on the conjugacy classes of $(1,1)$ and $(1,\beta^2)$. Let the values of $\eps\OfU$ on these two classes be given by
$$\varepsilon_{(1,1)^G}\OfU = 2 \quad\textrm{and}\quad \varepsilon_{(1,\beta^2)^G}\OfU = -1$$

All we have to do now is check that $\eps\OfU$ satisfies the inequalities \eqref{eqn inequality rp} and \eqref{eqn inequality rq}. Theorem \ref{thm zc Gpqd} then shows that this $G$ does indeed constitute a counterexample to the Zassenhaus conjecture. 

First assume that $n=(0,1) \in N_{19}$.
As in Notation~\ref{notation K_p, r_i} set $K_7 = (\alpha + \F_7,0)$. For $1 \leq i \leq 3$ define
$$
	A_i = \left\{1 \leq t \leq \frac{p^2-1}{d} \ \bigg| \ (\alpha^{i+3\cdot t},0) \in K_7 \right\}
$$ 
That is, $|A_i| = r_i(7)$. Denote by $\Nr$ the usual Galois norm of $\mathbb{F}_{7^2}$ over $\mathbb{F}_7$. Then for $x \in \mathbb{F}_7$ we have
$$\Nr(\alpha +x) = x^2+(\alpha + \alpha^p)x + \alpha^{p+1} = x^2+x+3 $$
where the last equality follows from the fact that $\alpha^p+\alpha$ is the trace of $\alpha$ over $\mathbb{F}_7$ and $\alpha^{p+1}$ its norm. Those are just the coefficients occurring in the minimal polynomial of $\alpha$, where the trace is taken negatively. We have $\Nr(\alpha^{i+3\cdot t}) = (\alpha^{p+1})^{i+3\cdot t} = 3^{i+3\cdot t}= (-1)^t\cdot 3^i$. So $\alpha + x \in A_i$ if and only if $\Nr(\alpha + x) \in \{\pm 3^i \}$. Computing these norms for every $x \in \mathbb{F}_7$ we get the values in Table~\ref{table F7}.
\begin{table}[h]
	\caption{Computation of $A_i$ for $\F_7$}
	\begin{tabular}{lccccccc}
		$x \in \mathbb{F}_7$: & $0$ & $1$ & $2$ & $3$ & $-3$ & $-2$ & $-1$  \\ \hline 
		$\Nr(\alpha + x)$: & $3$ & $-2$ & $2$ & $1$ & $2$ & $-2$ & $3$ \\ 
		$i$ such that $\alpha + x \in A_i$: & $1$ & $2$ & $2$ & $3$ & $2$ & $2$ & $1$
	\end{tabular}\label{table F7}
\end{table}
We conclude $r_1(7) = 2$, $r_2(7) = 4$ and $r_3(7) = 1$. The inequalities \eqref{eqn inequality rp} from Theorem \ref{thm zc Gpqd}, written in matrix form, now read as follows
$$\begin{pmatrix} 2 & 4 & 1 \\ 4 & 1 & 2 \\ 1 & 2 & 4 \end{pmatrix}\cdot  \begin{pmatrix} \varepsilon_{(\alpha,1)^G}\OfU \\ \varepsilon_{(\alpha^2,1)^G}\OfU \\ \varepsilon_{(1, 1)^G}\OfU \end{pmatrix} =  \begin{pmatrix} 2 & 4 & 1 \\ 4 & 1 & 2 \\ 1 & 2 & 4 \end{pmatrix}\cdot  \begin{pmatrix} -1 \\ 0 \\ 2 \end{pmatrix} \geq 0 $$
and they clearly hold.

Now let $n=(1,0) \in N_7$. We will argue similarly as above. Let $\Nr$ be the norm of $\mathbb{F}_{19^2}$ over $\mathbb{F}_{19}$. Define subsets $A_1, A_2, A_3 \subseteq K_{19}$ as before. Then for $x \in \mathbb{F}_{19}$ we have $\Nr(\beta + x) = x^2+x+2$. Moreover $\Nr(\beta^{i+3\cdot t}) = 2^{i+3\cdot t}$, so 
$$\Nr(A_i) \subseteq \{8^t\cdot 2^i \ | \ 1\leq t \leq 6 \} = \{2^i, 8 \cdot 2^i, 7 \cdot 2^i, -1 \cdot 2^i, -8 \cdot 2^i, -7 \cdot 2^i \}$$
Hence $$\Nr(A_1) \subseteq \{2, -3,-5,-2,3,5 \}$$ $$\Nr(A_2) \subseteq \{4, -6, 9, -4, 6, -9\}$$ $$\Nr(A_3) \subseteq \{8, 7, -1, -8, -7, 1 \}$$
Computing the norms of elements in $\beta + \mathbb{F}_{19}$ we obtain the values in Table~\ref{table F19}.
\begin{table}[h]
	\caption{Computation of $A_i$ for $\F_{19}$}
	\scalebox{0.9}{
	\begin{tabular}{lccccccccccccccccccc}
		$x \in \mathbb{F}_{19}$: & $0$ & $1$ & $2$ & $3$ & $4$ & $5$ & $6$ & $7$ & $8$ & $9$ & $-9$ & $-8$ & $-7$ & $-6$ & $-5$ & $-4$ & $-3$ & $-2$ & $-1$ \\ \hline
		$\Nr(\alpha + x)$: &  $2$ & $4$ & $8$ & $-5$ & $3$ & $-6$ & $6$ & $1$ & $-2$ & $-3$ & $-2$ & $1$ & $6$ & $-6$ & $3$ & $-5$ & $8$ & $4$  & $2$ \\ 
		$i$ such that $\alpha + x \in A_i$: & $1$ & $2$ & $3$ & $1$ & $1$ & $2$ & $2$ & $3$ & $1$ & $1$ & $1$ & $3$ & $2$ & $2$ & $1$ & $1$ & $3$ & $2$ & $1$
	\end{tabular}}\label{table F19}
\end{table}
So $r_1(19) = 9$, $r_2(19) = 6$ and $r_3(19) = 4$. Hence the inequalities \eqref{eqn inequality rq} from Theorem \ref{thm zc Gpqd}, written in matrix form, are
$$\begin{pmatrix} 9 & 6 & 4 \\ 6 & 4 & 9 \\ 4 & 9 & 6 \end{pmatrix}\cdot \begin{pmatrix} \varepsilon_{(1,\beta)^G}\OfU \\ \varepsilon_{(1,\beta^2)^G}\OfU \\ \varepsilon_{(1, 1)^G}\OfU \end{pmatrix} = \begin{pmatrix} 9 & 6 & 4 \\ 6 & 4 & 9 \\ 4 & 9 & 6 \end{pmatrix}\cdot  \begin{pmatrix} 0 \\ -1 \\ 2 \end{pmatrix} \geq 0 $$
and these also hold. This completes the proof, as all of our assertions now follow from Theorem \ref{thm zc Gpqd}.
\end{proof}

\begin{remark}
Theorems A and B assert the existence of certain units $u\in \UU(\Z G)$ for $G=G(p,q;d;\alpha,\beta)$. To do this we prove the existence of a double action module with the appropriate character.
		Describing the double action module $_u( \Z G)_G$ and the unit $u$ explicitly would be difficult, but in principle even this could be done. However, due to the size of $G$, this might not be feasible in practice and there is no guarantee that the resulting description would be ``nice''.
		
		By contrast, the construction of the $p$-local double action modules $_u( \Z_{(p)} G)_G$ for arbitrary prime numbers $p$ is perfectly explicit. We will do this in Proposition~\ref{prop explicit comp} below for the situation described in Theorem A. 
		The lattices constructed in Proposition~\ref{prop explicit comp} become projective upon restriction to $G$ by definition, and one can use Remark~\ref{remark characters} to compute their characters (on a computer rather than by hand). One can then go on to verify the conditions of Theorems \ref{thm existence u semilocal} and \ref{thm existence u global} directly,
		avoiding the various technical lemmas and cumbersome computations of this section.
\end{remark}

\begin{lemma}\label{lemma phi reps}
	Let $G = G(p,q;d;\alpha,\beta)$, $U = \langle c \rangle$ a cyclic group of order $p\cdot q$ and $n=(0,1)\in N_q$. Then representatives for the elements of $\Irr_\mathbb{Q}(N_p \times U_p)/{C_G(n)}$ are given by
	\begin{enumerate}
		\item The trivial character
		\item A character whose kernel equals $N_p$
		\item A non-trivial character whose kernel contains $U_p$
		\item For every $0 \leq i \leq d-1$ a non-trivial character $\varphi_i$ whose kernel is 
		$$\langle ((1,0),1), (\alpha^{i\cdot (p+1)} \cdot \alpha, 0),c_p) \rangle$$
	\end{enumerate}
\end{lemma}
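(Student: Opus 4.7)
The plan is to identify $\Irr_\Q(N_p \times U_p)$ with the trivial character together with the hyperplanes (index-$p$ subgroups) of $N_p \times U_p$, viewed as a three-dimensional $\F_p$-vector space. The standard fact I would use is that for a finite elementary abelian $p$-group $E$, the nontrivial irreducible rational characters correspond bijectively to the hyperplanes of $E$ (each hyperplane $H$ being the kernel of the character obtained by summing the nontrivial linear complex characters of $E/H$). Since $N$ is abelian and $U_p \leq U$ lies outside $G$, the subgroup $N \leq C_G(n)$ acts trivially on $N_p \times U_p$; by Proposition~\ref{prop all pq conjugate} we have $C_G(n) = C_G(N_q) = N \rtimes \langle a \rangle$, so the action factors through $\langle a \rangle$, where $a$ multiplies $N_p \cong \F_{p^2}$ by $\alpha^d$ and fixes $U_p$ pointwise.

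Next I would partition the $p^2+p+1$ hyperplanes into three types: (a) $H = N_p$ (one hyperplane, fixed by $a$, yielding item (2) of the statement); (b) $U_p \leq H$, i.e.\ $H = L \oplus U_p$ for an $\F_p$-line $L \subseteq N_p$; and (c) the remaining $(p+1)(p-1) = p^2-1$ hyperplanes, characterised by $H \cap N_p = L$ a line \emph{and} $U_p \not\leq H$. For type (b), the coprimality $\gcd(d, p+1) = 1$ (which comes from $d$ odd, $d \mid p^2-1$ and $d \nmid p+1$) implies that $\alpha^d$ permutes the $p+1$ lines of $N_p$ transitively, giving a single orbit and the representative in item (3).

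The real work is in type (c). Every such $H$ is uniquely described by the line $L = H \cap N_p$ together with a nonzero coset $v + L \in N_p/L$, where $((v,0), c_p)$ is the unique generator of $H/L$ whose $U_p$-component equals $c_p$. The proposed $\varphi_i$ corresponds to $L_i = \F_p$ and coset $\alpha^{i(p+1)+1} + \F_p$; note that $\alpha^{i(p+1)} \in \F_p^\times$, so these cosets are well-defined. To show that $\varphi_0,\ldots,\varphi_{d-1}$ lie in distinct $\langle a \rangle$-orbits I plan to check: the condition $a^k \cdot L_i = L_i$ forces $\alpha^{kd} \in \F_p^\times$, hence $k \equiv 0 \pmod{p+1}$ by coprimality; writing $k = m(p+1)$, the coset transforms to $\alpha^{(i+md)(p+1)+1} + \F_p$; and equality $\alpha^{(i+md)(p+1)+1} + \F_p = \alpha^{j(p+1)+1} + \F_p$ means $\alpha \cdot (\alpha^{(i+md)(p+1)} - \alpha^{j(p+1)}) \in \F_p$, which, since $\alpha \notin \F_p$, forces $(i+md)(p+1) \equiv j(p+1) \pmod{p^2-1}$, i.e.\ $i \equiv j \pmod d$ (using $d \mid p-1$). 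Exhaustiveness then follows from counting: stabilisers in $\langle a \rangle$ are trivial, so each orbit in (c) has size $|\langle a \rangle| = (p^2-1)/d$, and $d \cdot (p^2-1)/d = p^2-1$ accounts for all of type (c). I expect this orbit computation in type (c) to be the main technical step; the remaining parts are immediate from the representation theory of elementary abelian $p$-groups together with the transitivity and regularity statements collected in Proposition~\ref{prop all pq conjugate}.
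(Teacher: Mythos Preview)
Your argument is correct and follows essentially the same route as the paper: both identify nontrivial rational irreducibles with index-$p$ subgroups, split these into the same three types, invoke transitivity on lines of $N_p$ for type (b), parametrise type (c) by nontrivial cosets of lines, and use the semiregular $\langle a\rangle$-action (your direct computation, the paper's citation of Proposition~\ref{prop all pq conjugate}\eqref{prop all pq conjugate, C_A(N_q) semireg}) together with a stabiliser-of-$\F_p$ argument to see that the $\varphi_i$ represent $d$ distinct orbits. One small slip: $\gcd(d,p+1)=1$ does not follow from ``$d$ odd, $d\mid p^2-1$, $d\nmid p+1$'' alone---it comes from the standing hypothesis $d\mid p-1$ with $d$ odd, which you correctly invoke later.
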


\begin{proof}
	A rational character $\varphi$ of $N_p \times U_p$, an elementary-abelian group of rank three, is determined by its kernel. The kernel can be the whole group, which happens if and only if $\varphi$ is the trivial character, or an elementary abelian group of rank two. There are $\frac{p^3-1}{p-1} = p^2+p+1$ cyclic groups of order $p$ in $N_p \times U_p$ and as many subgroups of order $p^2$ by duality. Clearly $N_p$ is a subgroup invariant under the action of $C_G(n) = \langle a \rangle$. Furthermore $C_G(n)$ acts transitively on the cyclic subgroups of $N_p$ by Proposition~\ref{prop all pq conjugate}~\eqref{prop all pq conjugate, C_G(N_q) trans} and therefore also on the groups of the form $\langle m \rangle \times U_p$ with $m$ a non-trivial element of $N_p$. All rational characters which have any of these groups as their kernel are therefore conjugate under the action of $C_G(n)$.
	
	Since the number of cyclic subgroups of $N_p$ is equal to $p+1$ this leaves $p^2+p+1 - 1 - (p+1) = p^2-1$ possible kernels of irreducible characters. Such a kernel is generated by an element $m$ in $N_p$ and an element of the form $(m_0,c)$ with $m_0$ a non-trivial element in $N_p$ such that $m_0 \notin \langle m \rangle$. Hence this kernel is determined by the non-trivial coset $m_0 \cdot \langle m \rangle$. By Proposition~\ref{prop all pq conjugate}~\eqref{prop all pq conjugate, C_A(N_q) semireg} the group $C_A(n)$ acts semiregularly on these cosets, and since $a$ has order $\frac{p^2-1}{d}$ the action of $C_A(n)$ partitions the remaining possible kernels into $d$ orbits.
	
	It remains to show that cosets of the form $(\alpha^{i\cdot (p+1)}\cdot\alpha + \mathbb{F}_p,0)$ and $(\alpha^{j\cdot (p+1)}\cdot\alpha + \mathbb{F}_p,0)$ are not $C_G(n)$-conjugate for any $0\leq i,j \leq d-1$ with $i \neq j$. If $x \in G$ is an element conjugating $(\alpha^{i\cdot (p+1)}\cdot \alpha + \mathbb{F}_p,0)$ into $(\alpha^{j\cdot (p+1)}\cdot\alpha + \mathbb{F}_p,0)$ then $x$ stabilises $(\mathbb{F}_p,0)$ and hence corresponds to multiplication by an element in $\mathbb{F}_p^\times$. The subgroup of $\langle a \rangle$ acting by multiplication by elements of $\F_p^\times$
	is generated by $a^{p+1}$, since $a$ acts by multiplication by $\alpha^d$,  $\F_p^\times = \langle\alpha^{p+1}\rangle$ and $\gcd(d,p+1)=1$ by assumption. But $a^{p+1}$ acts by multiplication by $\alpha^{(p+1)\cdot d}$, so an $x$ lying in the group generated by $a^{p+1}$ could not possibly conjugate $(\alpha^{i\cdot (p+1)}\cdot \alpha + \mathbb{F}_p,0)$ into $(\alpha^{j\cdot (p+1)}\cdot\alpha + \mathbb{F}_p,0)$.
\end{proof}

\begin{prop}\label{prop explicit comp}
	 Assume we are in the situation of Theorem A. 
	 Let $q$ be a prime different from $19$.
	 Then the $\Z_{(q)}(G\times U)$-lattice
	$$
	\begin{array}{rccl}
	L{(19,q)} &=&& M([(1,0)]_7 \times N_{19} \times U_{19}, 7, q) \\&&\oplus& M([(1,0)]_7 \times \langle ((0,1),1), ((0,0), c_{19}) \rangle , 7, q) \\
&&\oplus& M([(1,0)]_7 \times \langle ((0,1),1), ((0,\beta), c_{19}) \rangle , 7, q)^{\oplus 2}  \\&&\oplus& M([(1,0)]_7 \times \langle ((0,1),1), ((0,2\beta), c_{19}) \rangle , 7, q)^{\oplus 14} \\
&&\oplus& M([(1,0)]_7 \times \langle ((0,1),1), ((0,4\beta), c_{19}) \rangle , 7, q)^{\oplus 3}
	\end{array}
	$$
	is $G$-regular with character $\chi$ as defined in formula \eqref{eqn def chi}.
	Here the $M(X, p,q)$ are as defined in Definition \ref{defi MXpq}.
	
	In the same vein, if $q$ is a prime different from $7$, then the $\Z_{(q)}(G\times U)$-lattice
	$$
		\begin{array}{rccl}
		L{(7,q)} &=&& M([(0,1)]_{19} \times N_7 \times U_7, 19, q)\\&& \oplus& M([(0,1)]_{19} \times \langle ((1,0), 1), ((0,0),c_7) \rangle, 19, q)\\&& \oplus& M([(0,1)]_{19} \times \langle ((1,0),1), ((9\alpha, 0), c_7) \rangle, 19, q)^{\oplus 7}
		\end{array}
	 $$
	is $G$-regular with character $\chi$.
\end{prop}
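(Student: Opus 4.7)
The plan is to invoke Lemma~\ref{lemma extending chin} twice: once with $p=7$, $n=(1,0)$ to build $L(19,q)$, and once with $p=19$, $n=(0,1)$ to build $L(7,q)$. The two computations are symmetric under swapping $p\leftrightarrow q$ and $\alpha\leftrightarrow\beta$, so I describe only the construction of $L(19,q)$.

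First I would note that, by the Remark following Theorem~\ref{thm existence u semilocal}, at most one $G$-orbit in $N_7$ makes $\chi_n$ nonzero, and since the $7$-parts of $(1,1)$ and $(1,\beta^2)$ both equal $(1,0)$, that orbit is $(1,0)^G$; hence $\chi|_{N\times U}=\chi_{(1,0)}$. Next, representatives of $\Irr_{\mathbb{Q}}(N_{19}\times U_{19})/C_G((1,0))$ are enumerated by the $p\leftrightarrow q$ reversed form of Lemma~\ref{lemma phi reps}: the trivial character, one character with kernel $N_{19}$, one non-trivial character whose kernel contains $U_{19}$, and three characters $\varphi_0,\varphi_1,\varphi_2$ with kernels $\langle((0,1),1),((0,\beta^{20i+1}),c_{19})\rangle$ for $i=0,1,2$. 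The multiplicities $\mu(\varphi,(1,0))$ are then read off (the reversed form of) Lemma~\ref{lemma xi proper}: the trivial character and the $m_0\in K$ case each yield $\mu=1$, while the kernel-$N_{19}$ case gives $\mu=0$. Each $\varphi_i$ falls under case~(2)(c)(ii) with a $g\in G$ acting on $N_{19}$ as multiplication by $\beta^{-20i}$, which forces $\ell(g)\equiv i\pmod 3$ and hence
\[
\mu(\varphi_i,(1,0))=r_i(19)\cdot\varepsilon_{(1,1)^G}\OfU+r_{i+1}(19)\cdot\varepsilon_{(1,\beta)^G}\OfU+r_{i+2}(19)\cdot\varepsilon_{(1,\beta^2)^G}\OfU.
\]
Substituting $r_1(19)=9$, $r_2(19)=6$, $r_3(19)=4$ (from the proof of Theorem~A) and $\varepsilon_{(1,1)^G}\OfU=2$, $\varepsilon_{(1,\beta)^G}\OfU=0$, $\varepsilon_{(1,\beta^2)^G}\OfU=-1$ yields multiplicities $2,14,3$, and the reductions $\beta^{20}=2$ and $\beta^{40}=4$ in $\mathbb{F}_{19}$ identify the three kernels with those displayed in $L(19,q)$.

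Once the six multiplicities have been matched against the summands of $L(19,q)$, Lemma~\ref{lemma extending chin} provides the rest for free: $L(19,q)|_G$ is projective and $L(19,q)|_{N\times U}$ has character $\chi|_{N\times U}$. Since $\chi$ and the character of $L(19,q)$ are both induced from $N\times U$, Proposition~\ref{prop induced characters determined} upgrades this to equality of characters on $G\times U$. Thus $\mathbb{Q}\otimes L(19,q)$ is the $G$-regular $\mathbb{Q}(G\times U)$-module furnished by Proposition~\ref{prop sum induced partaug}, and Proposition~\ref{prop identify local double action} concludes that $L(19,q)$ itself is $G$-regular. The main obstacle is organisational rather than conceptual: keeping the roles of $p=7$ and $q=19$ straight throughout, and matching each abstract $\varphi_i$ to the explicit subgroup in the proposition.
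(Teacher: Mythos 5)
Your proposal is correct and follows essentially the same route as the paper: apply Lemma~\ref{lemma extending chin} for $p=7$ and $p=19$, enumerate the orbit representatives via Lemma~\ref{lemma phi reps}, and evaluate the multiplicities $\mu(\varphi,n)$ via Lemma~\ref{lemma xi proper}, arriving at the same vectors $(1,0,1,2,14,3)$ and $(1,0,1,0,0,7)$. Your closing chain (Proposition~\ref{prop induced characters determined}, then Propositions~\ref{prop sum induced partaug} and~\ref{prop identify local double action}) just spells out the step the paper delegates to the proof of Theorem~\ref{thm existence u semilocal}.
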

\begin{proof}
	By Lemma~\ref{lemma extending chin} we have $G$-regular lattices  
	$$ L{(7,q)} =\bigoplus_{\varphi \in {\rm Irr}_{\Q}(N_{7}\times U_{7})^{C_G((0,1))}} M([(0,1)]_{19}\times {\rm Ker}(\varphi), 19, q)^{\oplus \mu(\varphi, (0,1))} $$
	and
	$$ L{(19,q)} =\bigoplus_{\varphi \in {\rm Irr}_{\Q}(N_{19}\times U_{19})^{C_G((1,0))}} M([(1,0)]_{7}\times {\rm Ker}(\varphi), 7, q)^{\oplus \mu(\varphi, (1,0))} $$
	both of which have character $\chi$. We just need to verify that these coincide with the definition of $L(7,q)$ and $L(19,q)$ made in the statement of the proposition.
	The $\varphi$ over which these direct sums range were described in Lemma~\ref{lemma phi reps} and the $\mu(\varphi, (1,0))$ and $\mu(\varphi, (0,1))$ can be computed using their definition in Lemma~\ref{lemma xi proper}. We will now do this explicitly. 
	
	By Lemma~\ref{lemma phi reps} there are $3+d = 6$ elements in ${\rm Irr}_{\Q}(N_{7}\times U_{7})/{C_G((0,1))}$ and ${\rm Irr}_{\Q}(N_{19}\times U_{19})/{C_G((1,0))}$. For $p \in  \{7,19\}$ define the following characters of $N_p \times U_p$, which are representatives of the classes of interest: $1_p$ is the trivial character, $\eta_p$ a non-trivial character with kernel $N_p$, $\psi_p$ a non-trivial character such that $U_p$ is in the kernel of $\psi_p$ and $\varphi_{p,i}$ a non-trivial character such that the kernel of $\varphi_{p,i}$ is $\langle ((1,0),1), ((\alpha^{8\cdot i}\cdot \alpha, 0), c_7) \rangle$ and $ \langle ((0,1),1), ((0,\beta^{20\cdot i}\cdot \beta), c_{19}) \rangle$ respectively, where $0 \leq i \leq 2$. This shows that the kernels of the various $\varphi$'s are as claimed, and it remains to compute the $\mu(\varphi,n)$'s.
	
	For convenience let $n = (0,1)$ for $p=7$ and $n = (1,0)$ for $p =19$. Then by Lemma~\ref{lemma xi proper} we know $\mu(1_p,n) =1$, $\mu(\eta_p, n) = 0$ and $\mu(\psi_p,n) = 1$, for either $p$. Recall that $\alpha^8 = 3$ and $\beta^{20} = 2$. By Lemma~\ref{lemma xi proper} we need to determine elements $g_1, g_2, h_1, h_2 \in G$ such that 
	$$(3\alpha+\mathbb{F}_7)^{g_1} = \alpha + \mathbb{F}_7, (9\alpha+\mathbb{F}_7)^{g_2} = \alpha + \mathbb{F}_7, (2\beta+\mathbb{F}_{19})^{h_1} = \beta + \mathbb{F}_{19} \  \text{and} \ (4\beta+\mathbb{F}_{19})^{h_2} = \beta + \mathbb{F}_{19}.$$
	Since all of these elements must stabilise $\mathbb{F}_7$ or $\mathbb{F}_{19}$, respectively, we get $g_1,g_2 \in \langle c^{8} \rangle$ (the subgroup of $\langle c \rangle$ corresponding to multiplication by elements of $\mathbb{F}_7^\times$) and $h_1, h_2 \in \langle c^{20} \rangle$. Now $c^8$ acts as $\alpha^8 = 3$ on $N_7$ and $c^{20}$ acts as $\beta^{20} = 2$ on $\mathbb{F}_{19}$. We have the following congruences: 
	$$3 \cdot 3^5 \equiv 9 \cdot 3^4 \equiv 1 \bmod 7 \ \ \text{and} \ \ 2 \cdot 2^{17} \equiv 4 \cdot 2^{16} \equiv 1 \bmod 19.$$
	So we can choose $g_1 = c^{5 \cdot 8}$, $g_2 = c^{4 \cdot 8}$, $h_1 = c^{17 \cdot 20}$ and $h_2 = c^{16 \cdot 20}$. This gives 
	$$
	(1,0)^{g_1} = (\alpha^{40},0), (1,0)^{g_2} = (\alpha^{32},0), (0,1)^{h_1} = (0,\beta^{340}) \ \text{and} \ (0,1)^{h_2} = (0,\beta^{320},0)$$ 
	In the notation of Lemma~\ref{lemma xi proper} we get 
	$$
	\begin{array}{rcl}
	\ell(g_1) &=& 40 \equiv 1 \bmod 3 \\ \ell(g_2) &=& 32 \equiv 2  \bmod 3 \\ \ell(h_1) &=& 340 \equiv 1 \bmod 3\\ \ell(h_2) &=& 320 \equiv 2 \bmod 3\end{array}$$ Note that $\ell(1) = 0$. So by Lemma~\ref{lemma xi proper} we obtain 
	$$ \begin{pmatrix} \mu(\varphi_{7,0},n) \\ \mu(\varphi_{7,1},n) \\ \mu(\varphi_{7,2},n) \end{pmatrix} = \begin{pmatrix} r_3(7) & r_1(7) & r_2(7) \\ r_1(7) & r_2(7) & r_3(7) \\ r_2(7) & r_3(7) & r_1(7) \end{pmatrix} \begin{pmatrix}
	 \varepsilon_{(1, 1)^G}\OfU \\ \varepsilon_{(\alpha,1)^G}\OfU \\ \varepsilon_{(\alpha^2,1)^G}\OfU  \end{pmatrix} = \begin{pmatrix} 1 & 2 & 4 \\ 2 & 4 & 1 \\ 4 & 1 & 2 \end{pmatrix} \begin{pmatrix} 2 \\-1 \\ 0  \end{pmatrix} = \begin{pmatrix} 0 \\ 0 \\7 \end{pmatrix}$$
	and
	$$ \begin{pmatrix} \mu(\varphi_{19,0},n) \\ \mu(\varphi_{19,1},n) \\ \mu(\varphi_{19,2},n) \end{pmatrix} = \begin{pmatrix} r_3(19) & r_1(19) & r_2(19) \\ r_1(19) & r_2(19) & r_3(19) \\ r_2(19) & r_3(19) & r_1(19) \end{pmatrix} \begin{pmatrix} \varepsilon_{(1, 1)^G}\OfU \\ \varepsilon_{(1,\beta)^G}\OfU \\ \varepsilon_{(1,\beta^2)^G}\OfU  \end{pmatrix} = \begin{pmatrix} 4 & 9 & 6 \\ 9 & 6 & 4 \\ 6 & 4 & 9 \end{pmatrix} \begin{pmatrix} 2\\ 0 \\ -1  \end{pmatrix} = \begin{pmatrix}  2 \\ 14 \\ 3	 \end{pmatrix}$$
	This shows that the $\mu(\varphi, n)$'s are as claimed, which concludes the proof.
\end{proof}

\section*{Acknowledgements}
We would like to thank \'A. del R\'{\i}o, whose research helped pave the way for this counterexample.
We would also like to express our gratitude to Rob who helped start this collaboration.

\bibliographystyle{alpha}
\bibliography{refs_zc}

\end{document}